\newtheorem{theoremletter}{Theorem}
\newtheorem{corollaryletter}{Corollary}
\newtheorem*{rep@theorem}{\rep@title}
\newcommand{\newreptheorem}[2]{%
\newenvironment{rep#1}[1]{%
 \def\rep@title{#2 \ref{##1}}%
 \begin{rep@theorem}}%
 {\end{rep@theorem}}}
\newtheorem{theorem}{Theorem}[section]
\newtheorem*{theorem*}{Theorem}
 \newtheorem*{conjecture*}{Conjecture}
  \newtheorem{conjecture}[theorem]{Conjecture}
  \newtheorem*{corollary*}{Corollary}
  \newtheorem{lemma}[theorem]{Lemma}
\newtheorem{corollary}[theorem]{Corollary}
\newtheorem{proposition}[theorem]{Proposition}
 \theoremstyle{definition}
 \newtheorem{definition}[theorem]{Definition} 
 \newtheorem{question}[theorem]{Question}
 \newtheorem{remark}[theorem]{Remark}
  \newtheorem*{setting*}{Setting (P)}
\numberwithin{equation}{section}
\newcommand {\N}{\mathbb{N}} 
\newcommand {\Z}{\mathbb{Z}} 
\newcommand {\R}{\mathbb{R}} 
\newcommand {\Q}{\mathbb{Q}} 
\newcommand {\C}{\mathbb{C}} 
\newcommand{\G}{\mathbb{G}}
\newcommand{\DD}{\mathcal{D}}
\newcommand{\EE}{\mathcal{E}}
\newcommand{\FF}{\mathcal{F}}
\newcommand{\OO}{\mathcal{O}}
\newcommand{\VV}{\mathcal{V}}
\newcommand{\XX}{\mathcal{X}}
\newcommand{\Proj}{\mathbb{P}}
\newcommand{\Sch}{\mathrm{Sch}}
\newcommand{\floor}[1]{\left\lfloor #1 \right\rfloor}
\newcommand\numberthis{\addtocounter{equation}{1}\tag{\theequation}}
\DeclareMathOperator{\vol}{vol}
\DeclareMathOperator{\Ker}{Ker}
\DeclareMathOperator{\Mor}{Mor}
\DeclareMathOperator{\im}{Im}
\DeclareMathOperator{\Id}{Id}
\DeclareMathOperator{\Aut}{Aut}
\DeclareMathOperator{\supp}{supp}
\DeclareMathOperator{\rank}{rank}
\DeclareMathOperator{\Hilb}{Hilb} 
\DeclareMathOperator{\Tr}{Tr} 
\DeclareMathOperator{\length}{length} 
\DeclareMathOperator{\Gal}{Gal}
\DeclareMathOperator{\Ensemble}{Ens}
\begin{document}	
\title[Generalized integral points on abelian varieties]
{Generalized integral points on abelian varieties and the Geometric Lang-Vojta conjecture}
\author[Xuan-Kien Phung]{Xuan Kien Phung}
\address{Département d'informatique et de recherche opérationnelle,  Université de Montréal, Montréal, Québec, H3T 1J4, Canada.}
\address{Département de mathématiques et de statistique, Université de Montréal, Montréal, Québec, H3T 1J4, Canada.} 
\email{phungxuankien1@gmail.com}
\subjclass[2020]{14G05, 11J95, 11D45, 14H05, 14D10, 14H30, 14H55}
\keywords{integral point, geometric Lang-Vojta Conjecture, abelian variety, hyperbolicity, generic emptiness, compact Riemann surface}

\begin{abstract}  
Let $f \colon \mathcal{A} \to B$ be a family of abelian varieties over a compact Riemann surface $B$ and fix an effective horizontal divisor $\mathcal{D} \subset \mathcal{A}$. We study $(S, \mathcal{D})$-integral sections $\sigma$ of the family $\mathcal{A}$ where $S \subset B$ is arbitrary. These sections $\sigma$ are algebraic and satisfy the geometric condition $f(\sigma(B) \cap \mathcal{D})\subset S$. Developing the work of Parshin, we establish new quantitative results concerning the finiteness and the polynomial growth of large unions of $(S, \mathcal{D})$-integral sections where $S$ can vary and is required to be finite only in a thin analytic open subset of~$B$. Such results are out of the range of purely algebraic methods and imply new evidence and interesting phenomena to the Geometric Lang-Vojta conjecture.
\end{abstract}

\maketitle
 
\setcounter{tocdepth}{1}

 \section{Introduction} 
 
 Let $\bar{C}$ be a smooth projective curve defined over an
algebraically closed field $k$ of characteristic zero.  
Let $S$ be a finite set of points of $\bar{C}$ 
and denote $C= \bar{C} \setminus S$.  
Consider a smooth affine variety $X$ of log-general type over $k(\bar{C})$ 
with a model $f\colon \XX \to C$. 
Let $\DD$ be the hyperplane at infinity in 
a compactification $\bar{\XX}$ of $\XX$. 
A weak form of the  \emph{Geometric Lang-Vojta conjecture} 
(cf. \cite[F.3.5]{hindry-silverman-book}, \cite[Conjecture 4.4]{vojta-86}, \cite{lang-86}, cf. also 
\cite{cor-zan-lang-voj-function-field-p2}, \cite{cor-zan-lang-voj-p2-2013})  
implies the following: 
\begin{conjecture} 
[Lang-Vojta] 
\label{conjecture-lang-vojta-geometric} 
There exists a proper closed subset $Z \subset X$ and  $m>0$ with the following property. 
Let $\mathcal{Z}$ be the Zariski closure of $Z$ in $\XX$. 
Then for every section $\sigma \colon C \to \XX$ with $\sigma(C) \not \subset \mathcal{Z}$, 
let $\bar{\sigma} \colon  \bar{C} \to \bar{\XX}$ be the extension of $\sigma$, we have   
\begin{equation}
\label{e:lang-voijta-linear}
\deg_{\bar{C}} \bar{\sigma}^* \DD \leq m (2g(\bar{C})- 2+ \#S). 
\end{equation}
\end{conjecture}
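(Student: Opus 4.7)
The plan is to follow the philosophy outlined in the abstract: substitute the algebraic intersection number $\deg_C \bar{\sigma}^* \DD$ by a hyperbolic-homotopic height $h(\sigma)$ built from the analytic uniformization of $\XX(\C)$ à la Parshin, and then bound $h(\sigma)$ linearly by the Euler characteristic $2g(\bar{C})-2+\#S$ of the base. Since the full Lang--Vojta conjecture for arbitrary smooth affine $X$ of log-general type is widely open, a realistic first target is the case in which $X$ admits an immersion into an abelian variety $A/k(\bar{C})$, which is the setting of the present paper; only there do we have enough additional structure (group law, universal covering by $\C^{\dim A}$, period lattice) to expect $h(\sigma)$ to be effectively comparable with the algebraic height.

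First I would fix a K\"ahler metric on $\bar{\XX}(\C)$ with Poincar\'e-type singularities along $\DD(\C)$, inducing a hyperbolic volume form $\omega$ on $\XX(\C)\setminus \DD(\C)$, and define $h(\sigma) := \int_C \sigma^* \omega$. A residue computation near the components of $\DD$ then yields an upper comparison of the form $\deg_C \bar{\sigma}^* \DD \leq c_1 \cdot h(\sigma) + c_2$, with $c_1, c_2$ depending only on the geometry of $(\bar{\XX}, \DD)$. Second, I would bound $h(\sigma)$ via a Schwarz--Ahlfors--Pick inequality for holomorphic maps into negatively curved targets, which gives $h(\sigma) \leq c_3 \cdot \mathrm{area}_{\mathrm{hyp}}(C) = c_3' \cdot (2g(\bar{C})-2+\#S)$, i.e., precisely the shape of \eqref{e:lang-voijta-linear}.

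The exceptional locus $Z$ should be identified as the locus of degeneracy of the Schwarz--Ahlfors step, namely the sections whose image is contained in a translate of a proper abelian subscheme of $\AAA$ dominating $B$, or lies in a stratum along $\DD$ where the hyperbolic form collapses. A Bloch--Ochiai--Kawamata type argument in the abelian setting should guarantee that this locus is a proper Zariski closed subset, providing the required $Z \subset X$, while uniformity in the section follows from the noetherianity of the stratification of $\bar{\XX}$ along $\DD$.

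The main obstacle is the uniform control of the constant $m$ over all sections $\sigma$ with $\sigma(C) \not\subset \mathcal{Z}$: a priori the implied constants in the Schwarz--Ahlfors step depend delicately on the local analytic geometry of $\sigma(C)$ near the points of $S$, and extracting a single $m$ valid for the entire family requires tame asymptotic control at the punctures together with a stratification argument. This is precisely where the paper's framework of $(S,\DD)$-integral sections and its generic emptiness/polynomial growth results are expected to enter as genuine new input, replacing the purely algebraic intersection-theoretic estimates that appear insufficient by themselves.
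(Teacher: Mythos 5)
The statement you set out to prove is Conjecture \ref{conjecture-lang-vojta-geometric} itself: it is an open conjecture, and the paper does not prove it. The paper only proves partial evidence in the abelian setting, namely Corollary \ref{t:lang-vojta-corollary}, where the bound $M$ is allowed to depend on $s=\#S$ (so the linear dependence on $2g(\bar{C})-2+\#S$, which is the heart of \eqref{e:lang-voijta-linear}, is \emph{not} obtained), and this is deduced from Theorem \ref{t:parshin-generic-emptiness-higher-dimension} together with standard height comparisons. So your proposal should be judged as a sketch of a new proof of an open problem, and as such it has genuine gaps.

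The critical gap is the Schwarz--Ahlfors--Pick step. To get $h(\sigma)\leq c_3\,\mathrm{area}_{\mathrm{hyp}}(C)$ uniformly in $\sigma$ you need a metric (or volume form) on $(\XX\setminus\DD)(\C)$ whose holomorphic sectional curvature, in the directions actually traced out by the sections, is bounded above by a negative constant; for a general non-isotrivial abelian fibration with nonzero trace no such metric is known to exist --- the fibres are flat, and the Kobayashi metric of the total space degenerates along them. This is exactly why the only previously known cases are $\Tr_{K/\C}(A)=0$ (Buium, via Kolchin differentials) and constant families (Noguchi--Winkelmann, via jet differentials), as the introduction explains; your step would reprove and strictly surpass both in one stroke, which is not plausible without new input. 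The residue comparison $\deg_C\bar{\sigma}^*\DD\leq c_1 h(\sigma)+c_2$ is also unjustified: with Poincar\'e-type singularities the pullback integral controls intersection multiplicities only if one already has uniform control of $\sigma$ near the points of $S$, which is the very content of the conjecture. Finally, in the abelian case the exceptional set should be empty (the paper notes this follows from an induction using Vojta's Lemma 4.2.1 when $D$ is ample), so invoking a Bloch--Ochiai--Kawamata locus $Z$ does not match the expected statement. By contrast, the paper's actual machinery (Parshin's homotopy reduction, Green's hyperbolicity theorems, the linear bound of Theorem \ref{t:linear-bound-s-base-curve-1} on hyperbolic lengths of loops in $B_0\setminus S$, and the counting Lemma \ref{l:exponential-homotopy-northcott}) deliberately avoids any curvature estimate on the total space, and in exchange only yields bounds that are polynomial in $\#S$ rather than the conjectured linear bound.
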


In the strong form of the Geometric Lang-Vojta conjecture, note that  $m$ should be independent of the curve $\bar{C}$.  
The bound \eqref{e:lang-voijta-linear} is  known   
 when $X$ is a curve and $m$ is  effective when $X$ is an elliptic curve (cf. \cite[Corollary 8.5]{hindry-silverman-88}). 
Several results are also known when $\bar{X}= \Proj^2$ in all arithmetic, analytic and algebraic settings 
(e.g. \cite{cor-zan-lang-voj-function-field-p2}, \cite{cor-zan-lang-voj-p2-2013}, 
\cite{turchet-p2}).   
  When $X$ is the complement of an effective ample divisor in an abelian variety $\bar{X}$, 
the exceptional set $Z$ can be taken to be empty 
(by an immediate induction using \cite[Lemma 4.2.1]{vojta-book}) and  Conjecture 
\ref{conjecture-lang-vojta-geometric}  holds when 
the trace $\Tr_{ k(\bar{C} )/ k}(\bar{X})$ 
(cf. \cite{chow-image-trace-1}, \cite{chow-image-trace-2}) of $\bar{X}$ is zero (cf. \cite{buium-94}) 
or when $\bar{X}$ is defined over $k= \C$ (cf. \cite{noguchi-winkelmann-04}). 
Even in these cases, it is particularly  difficult to obtain a purely algebraic proof: 
the proof in \cite{buium-94} uses the Kolchin differential theory while 
 \cite{noguchi-winkelmann-04} is based on the tool of jet-differentials. 
No similar results were known 
for a general abelian variety. 
 \par
The main goal of the paper is to 
provide new phenomena and evidence for Conjecture~\ref{conjecture-lang-vojta-geometric} when 
$\bar{\XX} \to \bar{C}$ 
is a family of abelian varieties 
over a compact Riemann surface  where $S \subset \bar{C}$ can vary and is not necessarily finite 
(Theorem  \ref{t:parshin-generic-emptiness-higher-dimension}, Theorem~\ref{t:main-generalization-of-A},  
Corollary \ref{t:lang-vojta-corollary}, Corollary~\ref{t:lang-vojta-corollary-generalized-1}).
\par 
We develop the work of Parshin in \cite{parshin-90} 
to formulate a   {hyperbolic-homotopic method}  
as a substitute for the classical intersection theory. 
The technique allows us to address quantitative problems 
concerning \emph{$(S,\DD)$-integral points}
, namely, rational points $P \in X(k(\bar{C}))$  whose induced section $\sigma_P \in \mathcal{X}(\bar{C})$ satisfy  $f(\sigma_P(\bar{C}) \cap \mathcal{D})\subset S$. 
The hyperbolic part (Theorem \ref{t:linear-bound-s-base-curve-1}), which may be of independent interest,  
concerns the hyperbolic length of loops on compact Riemann surfaces. 
We obtain 
a reasonable estimation of the growth of integral sections of $\bar{\XX} \to \bar{C}$ 
in the sense that 
it recovers corresponding results in all special known cases  (cf. Section  \ref{r:optimal-growth}). 
Certain results on the topology of the intersection locus of sections with a horizontal divisor 
in an abelian fibration are also established 
(Theorem~\ref{p:uncountable-limit-point}). 

\subsection{Finiteness and growth of integral points} 

In \cite[Theorem 3.2]{parshin-90}, 
Parshin proved the following general finiteness theorem with 
an elegant hyperbolic method: 

\begin{theorem}
[Parshin] 
\label{t:parshin-90}
Let $S \subset B$ be a finite subset. 
Let $f \colon \mathcal{A} \to B$ be a family of abelian varieties and let $\DD \subset \mathcal{A}$ be an effective integral divisor which dominates $B$. 
Assume that $D_K$ does not contain any translates of nonzero abelian subvarieties of $\mathcal{A}_K$. 
Then the set 
$ 
\{ P \in \mathcal{A}_K(K) \colon  f(\sigma_P (B) \cap \DD) \subset S \} 
$ is finite modulo the trace  $\Tr_{K/\C}(\mathcal{A}_K)(\C)$. 
\end{theorem}
\par 
Note that 
 by the Lang-N\' eron theorem  \cite{lang-neron-theorem-paper}, $\mathcal{A}_K(K)/ \Tr_{K/\C}(\mathcal{A}_K)(\C)$ is an abelian group of finite rank. We establish the following generalization of  Theorem~\ref{t:parshin-90} (cf.~Section~\ref{s:parshin-generic-emptiness-higher-dimension}, see also  Theorem~\ref{t:main-generalization-of-A} for an even  stronger statement). 
\begin{setting*}
Let $A$ be an abelian variety over $K$.   
Let $D \subset A$ be a reduced effective ample divisor 
of $A$ which does not contain any translates of nonzero abelian subvarieties (e.g., when $A$ is simple).  
Let $\DD$ be the Zariski closure of $D$ in a proper flat model $f \colon \mathcal{A} \to B$ of $A$.
Let $T \subset B$ be the finite subset 
consisting of $b \in B$ such that $\mathcal{A}_b$ is not smooth. 
\end{setting*} 

\begin{theoremletter}
\label{t:parshin-generic-emptiness-higher-dimension}  
In Setting (P), let $W\supset T$ be any finite union of disjoint closed discs in $B$ such that distinct points of $T$ are contained in distinct discs. 
Let $B_0= B \setminus W$. 
Then there exists $m >0$ such that  for all $s \in \N$, the set $I_s \coloneqq \{ P \in A(K) \colon \# f(\sigma_P (B_0) \cap \DD) \leq s \}$ satisfies 
\begin{equation}
\label{e:qualitative-bound-abelian} 
\# I_s  \text{ mod } \Tr_{K/\C} (A) (\C)
 \leq m(s+1)^{2 \dim A . \rank \pi_1(B_0)}.
\end{equation}  
\end{theoremletter}
Here, $\sigma_P \colon B \to \mathcal{A}$ is the induced section of $P \in A(K)$ and  $\rank \pi_1(B_0)$ denotes the minimal number of generators of 
the finitely generated group $\pi_1(B_0)$. 
\par 
Theorem \ref{t:parshin-generic-emptiness-higher-dimension} implies the polynomial bound in $\# S$ on the number of 
$(S, \DD)$-integral sections proved in \cite{hindry-silverman-88} by Hindry-Silverman for elliptic 
curves or by Buium in \cite{buium-94} when $\Tr_{K /\C}(A)=0$ and in 
\cite{noguchi-winkelmann-04} by Noguchi-Winkelman when $A$ is defined over~$\C$ 
(Section  \ref{r:discussion-poly-bound-intro}). 
When $A$ is an elliptic curve, results in  \cite{phung-19-elliptic} show that 
Theorem \ref{t:parshin-generic-emptiness-higher-dimension} 
can be strengthened where $\DD$ can also vary in a compact family 
of divisors on $\mathcal{A}$ in the definition of union $I_s$.   
\par
To the limit of our knowledge, 
even finiteness results of $(S, \DD)$-integral sections for certain $S \subset B$   
countably infinite is not stated elsewhere before in the literature. 
To obtain such results,  
  establishing a height bound as in traditional approaches, 
which depends on the cardinality of $S$, 
is clearly not sufficient. 
In the case of number fields, 
the closest related finiteness result seems to be a result of Silverman \cite{silverman-87}   (see also \cite[Theorem 1.39]{phung-19-phd}) 
for elliptic curves but it requires some strong restrictions on the set of sections.

\subsection{Application to the Geometric Lang-Vojta conjecture} 
Theorem \ref{t:parshin-generic-emptiness-higher-dimension} 
implies a certain extension of Conjecture \ref{conjecture-lang-vojta-geometric} 
for polarized abelian varieties as follows. 

\begin{corollary} 
\label{t:lang-vojta-corollary} 
In Setting \emph{(P)}, let $W \supset T$ be a finite union of disjoint closed discs in $B$ such that distinct points of $T$ are contained in distinct discs.  
Let $B_0= B \setminus  W$.  
Then for every $s >0$, there exists $M=M(\mathcal{A}, \DD, B_0, s) >0$ such that for every section $\sigma \colon B \to \mathcal{A}$ with 
$\# f(\sigma(B_0) \cap \DD) \leq s$, one has $\deg_B \sigma^* \DD < M$. 
\end{corollary}

When $\Tr_{K/\C}(A)$ is nonzero and $A$ is nonconstant, the above result was not known even 
under the hypothesis $W=T$. Note that we obtain in  Corollary~\ref{t:lang-vojta-corollary-generalized-1} a stronger statement. 

\begin{proof}
We equip $A/K$ with a symmetric ample line bundle 
$L$ 
and consider the corresponding canonical N\' eron-Tate height function 
$\widehat{h}_L \colon A(K)/\Tr_{K / \C}(A)(\C) \to \R_+$. 
Since $L$ is ample, 
there exists $n \in \N^*$ such that the line bundle 
$L^{\otimes n} \otimes \OO(- D)$ is very ample on $A$. 
By standard positivity properties of 
height theory (cf.  \cite{hindry-silverman-book}, \cite{conrad-trace}), 
there exists a finite number $c >0$ 
such that for every $P \in A(K)$ with $\sigma_P \in \mathcal{A}(B)$ 
the corresponding section, we have: 
\begin{equation} 
\label{e:lang-vojta-corollary-general}
n \widehat{h}_L (P) +  c > \deg_B \sigma_P^* \OO(\DD). 
\end{equation}  
\par 
Let $s >0$ and let $I_s \coloneqq \{ P \in A(K) \colon \# f(\sigma_P (B_0) \cap \DD) \leq s \}$.
As $I_s$ mod  $\Tr_{K/ \C}(A)(\C)$ is finite by Theorem \ref{t:parshin-generic-emptiness-higher-dimension} 
and as the canonical height $\widehat{h}_L$ is invariant under translations by the trace, 
we can define a finite number $H \coloneqq  \max_{P \in I_s} \widehat{h}_L(P)$. 
For $\sigma\in I_s$,  
 \eqref{e:lang-vojta-corollary-general} implies that: 
\begin{equation*} 
\deg_B \sigma^* \DD \leq \sup_{\tau \in I_s} \deg_B \tau^* \DD 
\leq n \max_{P \in I_s} \widehat{h}_L(P) + c \leq n H +c.  
\end{equation*} 
\par 
The conclusion follows by setting $M= nH+c >0$.  
\end{proof} 

In particular, the Lefschetz Principle and Corollary \ref{t:lang-vojta-corollary} 
imply immediately that if we allow $m$ to depend on $\# S$, Conjecture \ref{conjecture-lang-vojta-geometric} is true 
for $X = \bar{X} \setminus D$ with moreover an empty exceptional set $Z$, 
where $\bar{X}/ k(\bar{C})$ is an abelian variety and $D \subset \bar{X}$ 
is any effective ample divisor not containing 
any translates of nonzero abelian subvarieties of $\bar{X}$.

\subsection{Hyperbolic length on Riemann surfaces} 
One of the new ingredients in the proof of 
Theorem \ref{t:parshin-generic-emptiness-higher-dimension} 
is the following linear bound 
on the hyperbolic length of loops in 
various complements of a Riemann surface. Let $U$ be a finite union of disjoint closed discs in the Riemann surface 
$B$ and denote $B_0 \coloneqq B \setminus U$. 
We prove the following  estimation  (Section \ref{proof-of-hyper-linear-bound-chapter}):  
 
\begin{theoremletter}
\label{t:linear-bound-s-base-curve-1}
Let $\alpha \in \pi_1(B_0) \setminus \{0\}$. 
Then there exists $L >0$ with the following property. 
For any finite subset $S \subset B_0$,  
there exists  a piecewise smooth loop  
$\gamma \subset B_0 \setminus S$ which represents     
the free homotopy class $\alpha$ in $B_0$ and satisfies $ 
\length_{d_{B_0 \setminus S}} (\gamma) \leq L(\#S + 1)$. 
\end{theoremletter} 

\par 
Here, $d_{B_0 \setminus S}$ denotes the intrinsic Kobayashi hyperbolic metric on $B_0 \setminus S$ 
(Definition \ref{pseudo Kobayashi hyperbolic metric}). More geometric information on the loop $\gamma$ is given in  Theorem \ref{t:collars-for-linear-hyperbolic-bound} and Theorem~\ref{t:collar-moving-discs-general}.  
We sketch briefly below the role of 
Theorem \ref{t:linear-bound-s-base-curve-1} in the proof of Theorem \ref{t:parshin-generic-emptiness-higher-dimension}. 
\par  
In Setting(P), let  $U$ be a certain finite union of disjoint closed discs in $B$ and let $B_0 \coloneqq B \setminus U$. 
For every finite subset $S \subset B_0$, 
the image of a holomorphic section $ 
\sigma \colon B_0 \setminus S \to (\mathcal{A} \setminus \DD)\vert_{B_0 \setminus S}$  
is a totally geodesic subspace with respect to the Kobayashi hyperbolic metrics 
$d_{B_0 \setminus S}$ and $d_{(\mathcal{A} \setminus \DD)\vert_{B_0 \setminus S}}$. 
In particular, 
$\length_{d_{(\mathcal{A} \setminus \DD)\vert_{B_0 \setminus S}}} 
\sigma (\gamma) = \length_{d_{B_0 \setminus S}} (\gamma)$ 
 for every loop $\gamma \subset B_0 \setminus S$. 
 By Ehresmann's theorem, 
 $\mathcal{A}_{B_0} \to B_0$ is a fiber bundle in the differential category. 
 Let $w_0 \in \mathcal{A}_{B_0}$ and $b_0= f(w_0) \in  B_0$. 
Every algebraic section $\sigma \colon B_0 \to \mathcal{A}_{B_0}$ induces 
a homotopy section $i_\sigma$ of the short exact sequence:   
\begin{equation*}
0 \to \pi_1(\mathcal{A}_{b_0}, w_0) \to \pi_1(\mathcal{A}_{B_0}, w_0) \xrightarrow{f_*} \pi_1(B_0,b_0) \to 0.  
\end{equation*}
\par 
A   reduction step of Parshin says that it is enough to bound  
the number of homotopy sections $i_\sigma$ 
in order to bound the number of algebraic sections $\sigma$.  
Fix a system of generators $\alpha_1, \dots, \alpha_k$ of $\pi_1(B_0, b_0)$. 
A theorem of Green says that $(\mathcal{A}\setminus \DD)_{B_0}$ is hyperbolically embedded 
in $\mathcal{A}$. 
Therefore, the number of homotopy sections will be controlled if we can bound 
$\length_{d_{(\mathcal{A} \setminus \DD)\vert_{B_0}}} \sigma (\gamma_i) = \length_{d_{B_0}} (\gamma_i)$  
for some representative loop $\gamma_i$ of each $\alpha_i$. 
\par
An $(S \cup U, \DD)$-integral section $\sigma \colon B \to \mathcal{A}$ 
does not induce a section $B_0 \to \mathcal{A}_{B_0}$ in general 
but only induces a section $B_0 \setminus S \to (\mathcal{A} \setminus \DD)\vert_{B_0 \setminus S}$. 
However, as $d_{B_0 \setminus S} \geq d_{B_0}\vert_{B_0 \setminus S}$,  it suffices 
to bound $\length_{d_{B_0 \setminus S}} (\gamma_i)$ for some loop $\gamma_i \subset B_0 \setminus S$ 
representing $\alpha_i$ for $i= 1, \dots, k$. 
Therefore, 
Theorem \ref{t:linear-bound-s-base-curve-1} plays a crucial role 
for the quantitative estimation 
of $I_s$ in Theorem~\ref{t:parshin-generic-emptiness-higher-dimension}. 
\par 
We  investigate some further related aspects of Theorem 
\ref{t:linear-bound-s-base-curve-1}.  
For each free homotopy class $\alpha \in \pi_1(B_0)$ and each $s \in \N$, 
we can associate a constant 
\begin{equation}
L(\alpha, s) \coloneqq \sup_{\# S \leq s} \inf_{[\gamma]= \alpha} \length_{d_{B_0 \setminus S}} (\gamma) \in \R_+, 
\end{equation}
where $S$ runs over all subsets of $B_0$ of cardinality at most $s$ and 
$\gamma$ runs over all loops in $B_0 \setminus S$ 
which represent $\alpha$. 
Theorem \ref{t:linear-bound-s-base-curve-1} asserts that 
$L(\alpha, s)$ grows at most linearly in $s$. 
Moreover, we prove the following optimal lower bound (cf. Section \ref{proof-of-hyper-lower-bound-chapter}, Remark \ref{r:optimal-lower-bound-s-log-s} for the optimality): 

\begin{theoremletter}
\label{t:linear-bound-sqrt-s-log-s-base-curve-1} 
Given $\alpha \in \pi_1(B_0) \setminus \{0\}$, there exists 
$c >0$ such that for every $s \in \N$:   
\begin{equation}
\label{e:sqrt-log-hyper-length-bound-in-chapter} 
L(\alpha , s) \geq \frac{cs^{1/2}}{\ln (s+2)}. 
\end{equation}
\end{theoremletter}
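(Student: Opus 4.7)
The plan is to exhibit, for each $s\in\N$, an explicit witness set $S=S_s\subset B_0$ with $\#S\le s$ that achieves the claimed bound, and to lower-bound the $d_{B_0\setminus S}$-length of any representative of $\alpha$ by an Ahlfors--Schwarz comparison against an auxiliary conformal metric. First, by a standard covering argument on the compact Riemannian surface $(B,d)$, I choose $S_s\subset B_0$ with $\#S_s\le s$ that is an $\epsilon$-net in $B_0$, with $\epsilon=\epsilon(s)\asymp 1/\sqrt{s}$; every point of $B_0$ then lies within $d$-distance $\epsilon$ of $S_s$.

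The main technical ingredient is to construct a conformal auxiliary metric $\sigma=\sigma_s$ on $B_0\setminus S_s$ with the following properties: \textbf{(a)} in local holomorphic coordinates near each $p\in S_s$, $\sigma$ agrees up to a bounded conformal factor with the standard cusp metric $|dw|/(|w|\log(1/|w|))$; \textbf{(b)} the Gaussian curvature of $\sigma$ is bounded above by $-1$ globally; \textbf{(c)} one has the pointwise lower bound $\sigma(z)\ge c_1/\bigl(\epsilon\log(1/\epsilon)\bigr)$ for every $z\in B_0\setminus S_s$, with $c_1>0$ depending only on $(B_0,d)$. The Ahlfors--Schwarz Lemma then yields $\rho_{B_0\setminus S_s}\ge\sigma$ on $B_0\setminus S_s$, where $\rho_{B_0\setminus S_s}$ denotes the density of the intrinsic Kobayashi--hyperbolic metric $d_{B_0\setminus S_s}$ with respect to $d$.

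Integrating this estimate along any piecewise smooth loop $\gamma\subset B_0\setminus S_s$ whose free homotopy class in $B_0$ equals $\alpha$ gives
\begin{equation*}
\length_{d_{B_0\setminus S_s}}(\gamma)\ \ge\ \int_\gamma\sigma\,ds_d\ \ge\ \frac{c_1}{\epsilon\log(1/\epsilon)}\,\length_d(\gamma)\ \ge\ \frac{c_1\ell_0}{\epsilon\log(1/\epsilon)},
\end{equation*}
where $\ell_0=\ell_0(\alpha,B_0)>0$ denotes the infimum of $d$-lengths of loops in $B_0$ representing $\alpha$ (strictly positive since $\alpha\neq 0$). Substituting $\epsilon\asymp 1/\sqrt{s}$ produces the lower bound $c\,s^{1/2}/\log(s+2)$, and taking the infimum over admissible $\gamma$ yields the desired estimate on $L(\alpha,s)$.

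The hard part will be verifying \textbf{(b)}: whenever two punctures of $S_s$ lie at distance $\le\epsilon$ (generic for an $\epsilon$-net), the individual cusp metrics overlap and must be blended so that the resulting smooth metric still has curvature $\le -1$ everywhere. This is classical in the spirit of Ahlfors' construction of supporting metrics for finitely punctured planar domains, but transferring it to the Riemann-surface $B_0\setminus S_s$ requires careful bookkeeping across patching regions; the bound \textbf{(c)} is then automatic from the $\epsilon$-net property, since $d(z,S_s)\le\epsilon$ forces $z$ into the cusp region of some puncture. The optimality of the $\log(s+2)$ factor (cf.~Remark~\ref{r:optimal-lower-bound-s-log-s}) should reflect the sharpness of the cusp-density profile $1/(d\log(1/d))$ used in the construction of $\sigma$.
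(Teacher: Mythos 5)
Your plan captures the right intuition --- a $\sim s^{-1/2}$-scale distribution of punctures, cusp-density asymptotics $\sim 1/(r\log(1/r))$, and the lower bound $\ell_0>0$ on $d$-lengths of representatives of $\alpha$ --- but it takes a genuinely different route from the paper and, as written, has a real gap.

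\emph{How the paper does it.} The paper avoids constructing any supporting metric on $B_0\setminus S$ at all. It fixes a ramified cover $\pi\colon B\to\C\Proj^1$, uses Kershner's covering theorem to produce $s$ points $Z\subset\C\Proj^1$ whose Fubini--Study discs of radius $\asymp s^{-1/2}$ cover the sphere, sets $S=\pi^{-1}(Z)$, and then for each $x$ picks $z\in Z$ within distance $r_0s^{-1/2}$ and compares $\C\Proj^1\setminus T$ (for a suitable triple-extension $T\supset Z$) with the thrice-punctured sphere $\C\Proj^1\setminus\{w,z,z'\}\simeq\Omega$ via the distance-decreasing property of the Kobayashi metric (Lemma~\ref{l:distance-decreasing-hyperbolic}) and Ahlfors' explicit cusp estimate (Theorem~\ref{ahlfors-73-basic-lemma}). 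This entirely sidesteps the issue of gluing cusp metrics with controlled curvature --- one only ever needs the known density of $\lambda_\Omega$ near a single cusp.

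\emph{Where your proposal has a gap.} The construction of $\sigma=\sigma_s$ with properties (a)--(c) is asserted, not carried out, and you correctly flag (b) as the weak point. Two observations. First, the difficulty you describe (smoothly blending overlapping cusp metrics while keeping curvature $\le -1$) is actually best resolved not by smoothing but by taking the pointwise \emph{upper envelope} of the local cusp metrics and invoking Ahlfors' notion of ultrahyperbolic (supporting) metrics, under which the envelope automatically satisfies the curvature condition in the generalized sense; your sketch does not say this, and a smooth blending can indeed push curvature above $-1$ on patching regions. Second, even with the envelope trick, properties (a) and (c) require a \emph{uniform} comparison between the local holomorphic coordinate $w_p$ at each puncture $p\in S_s$ and the background Riemannian metric $d$ (uniform in both $p\in B_0$ and $s\to\infty$); this is a compactness argument over $(B_0,d)$ that your proposal does not address, and without it the constant $c_1$ in (c) could degenerate with $s$. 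There is also a boundary effect to control when a puncture lies near $\partial B_0$. None of these is fatal to the approach, but each is a genuine omission, whereas the paper's reduction to the sphere and to the single model cusp on $\Omega$ makes the whole issue disappear.
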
 

It would be interesting to understand the asymptotic  
behavior of $L(\alpha, s)$ in terms of $s$: 

\begin{question}
What are the limits: 
\begin{align}
\deg^-(\alpha)\coloneqq 
\liminf_{s \to \infty} \frac{\ln L(\alpha, s)}{\ln s}, 
\quad \deg^+(\alpha)   \coloneqq \limsup_{s \to \infty} \frac{\ln L(\alpha, s)}{\ln s},   
\end{align}
which correspond to the lower and upper polynomial growth degrees  
of $L(\alpha, s)$ in terms of $s$? 
\end{question}
By Theorem \ref{t:linear-bound-s-base-curve-1} and Theorem \ref{t:linear-bound-sqrt-s-log-s-base-curve-1}, 
we know that for every $\alpha \in \pi_1(B_0) \setminus \{0\}$, we have: 
\begin{equation}
1/2 \leq \deg^-(\alpha) \leq \deg^+(\alpha) \leq 1. 
\end{equation}

If we require $\alpha$ to belong to a certain base of $\pi_1(B_0)$ (cf. Section~ \ref{l:primitive-simple-basis-fundamental-group}), 
the constant $L>0$ in Theorem \ref{t:linear-bound-s-base-curve-1} 
depends only on $U$ and the Riemann surface $B$ 
(cf. Theorem \ref{t:collars-for-linear-hyperbolic-bound}). \par 

\section{Further applications and remarks}

\subsection{Intersection locus of sections with divisors} 

In Setting (P), assume in addition that $\Tr_{K/ \C}(A)=0$. 
For every subset $R \subset A(K)\setminus D$, we define the intersection locus:    
\begin{equation*}
I(R,\DD) \coloneqq \cup_{P \in R} f(\sigma_P(B) \cap \DD) \subset B.
\end{equation*}

\par 
We prove the following result which concerns the topology of the intersection of sections of an abelian scheme with a horizontal 
divisor (cf. Section \ref{s:uncountable-limit-point}).  

\begin{theoremletter}
\label{p:uncountable-limit-point}
Assume that $R$ is infinite. 
Then the following properties hold: 
\begin{enumerate} [\rm (i)]
\item
$I(R, \DD)$ is countably infinite but it is not analytically closed in $B$;  
\item 
the set $I(R, \DD)_\infty$ of limit points of $I(R, \DD)$  in $B$ is uncountable; 
\item  
$I(R, \DD)_\infty$ is not contained  in any union $W \supset  T$ of disjoint closed discs in $B$ such
that distinct points of T are contained in distinct discs. 
 \end{enumerate}
\end{theoremletter}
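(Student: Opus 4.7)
The plan is to establish the pieces of Theorem~\ref{p:uncountable-limit-point} in the order: countability in (i), then (iii), then (ii), then the non-closedness in (i), each time invoking Theorem~\ref{t:parshin-generic-emptiness-higher-dimension} (or its specialization Theorem~\ref{t:parshin-90}) together with the Lang-N\'eron theorem and some elementary point-set topology on $B$.

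For countability in (i), the triviality of $\Tr_{K/\C}(A)$ together with the Lang-N\'eron theorem makes $A(K)$ a finitely generated abelian group, hence countable; each $f(\sigma_P(B) \cap \DD)$ is finite since $\sigma_P(B) \not\subset \DD$ (as $P \notin D$) forces the intersection to be zero-dimensional. For infiniteness, if $I(R, \DD) = S$ were finite, then every $P \in R$ would be $(S, \DD)$-integral, so Theorem~\ref{t:parshin-90} (which applies since $D$ contains no translates of nonzero abelian subvarieties) would make the image of $R$ modulo the trivial trace finite, contradicting the hypothesis that $R$ is infinite.

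To prove (iii), I argue by contradiction: suppose $I(R, \DD)_\infty \subset W$ for some permitted $W$. A slight thickening of each of the finitely many discs of $W$ produces a permitted enlargement $W' \supset W$ whose interior contains an open neighborhood of $\partial W$ in $B$ (disjointness and the $T$-marking constraint are preserved thanks to the positive minimum separation of the original discs). Any limit point of $I(R, \DD) \cap (B \setminus W')$ in $B$ would lie in $I(R, \DD)_\infty \subset W \subset W'$, so this intersection has no limit point in $B$; hence it is closed and discrete in compact $B$, and therefore finite, of some cardinality $s$. Every $P \in R$ then satisfies $\#\bigl(f(\sigma_P(B) \cap \DD) \cap (B \setminus W')\bigr) \leq s$, placing $R$ inside the set $I_s$ attached by Theorem~\ref{t:parshin-generic-emptiness-higher-dimension} to $W'$; since the trace is trivial that set is finite, contradicting $R$ infinite.

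For (ii), assume $I(R, \DD)_\infty$ is countable. As a closed countable subset of compact $B$ it is scattered with countable Cantor-Bendixson rank, and a transfinite induction on this rank (successor step: cover the finitely many deepest limit points by small pairwise disjoint closed discs and invoke the induction on the residual closed subset of strictly smaller rank; limit step: reduce by compactness of $B$ to a predecessor ordinal) shows that $I(R, \DD)_\infty$ lies in a finite union of pairwise disjoint closed topological discs in $B$. After refining this cover so that each $t \in T$ lies in its own disc (splitting any disc containing several such points and adjoining small disjoint discs about the remaining $t$'s), one obtains a permitted $W$ with $I(R, \DD)_\infty \subset W$, contradicting~(iii). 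The non-closedness in (i) is then immediate: if $I(R, \DD)$ were analytically closed, then $I(R, \DD)_\infty \subset I(R, \DD)$, incompatible with the established countability of $I(R, \DD)$ and the uncountability from (ii). The main technical obstacle is this topological step in (ii)---enclosing a countable closed subset of $B$ in a finite union of pairwise disjoint closed discs subject to the $T$-marking constraint---whose Cantor-Bendixson induction is standard but whose geometric realization on a Riemann surface (keeping the discs pairwise disjoint while placing each prescribed marked point in its own disc) requires some care.
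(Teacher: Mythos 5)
Your argument is correct in substance, and it reaches the theorem by a route that is close to, but organized differently from, the paper's. The paper proves the non-closedness in (i) first: if $I(R,\DD)$ were analytically closed, it would be a countable closed subset of $B$, and the paper's Lemma \ref{l:refined-cover-countable-closed} (a countable closed set, together with the finite set $T$, lies in a finite union of disjoint closed discs with distinct points of $T$ in distinct discs) encloses it in an admissible $W$; then every $P\in R$ is $(W,\DD)$-integral, and Theorem \ref{t:parshin-generic-emptiness-higher-dimension} with $s=0$ and trivial trace makes $R$ finite, a contradiction. The paper then notes that (iii) follows by the same argument, and gets (ii) by applying the same enclosure to the countable closed set $\overline{I(R,\DD)}=I(R,\DD)\cup I(R,\DD)_\infty$. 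You instead prove (iii) first by thickening $W$ to $W'$ and applying Theorem \ref{t:parshin-generic-emptiness-higher-dimension} with a finite $s$ counting the finitely many points of $I(R,\DD)$ outside $W'$; this is a perfectly sound, and in fact more explicit, version of the paper's terse ``same argument'', and your deduction of (ii) from (iii) and of non-closedness from (i)--(ii) is valid and non-circular.

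The one place where you re-derive something the paper already supplies is the enclosing step in (ii): the fact you need is precisely Lemma \ref{l:refined-cover-countable-closed}, proved in the paper by a direct recursive construction, and you could simply cite it (applied to $I(R,\DD)_\infty\cup T$). If you keep your Cantor--Bendixson induction, two points must be made explicit to close the sketch: the induction hypothesis has to be strengthened (e.g.\ the discs can be taken of radius less than any prescribed $\varepsilon$, or inside any prescribed open neighbourhood of the set) so that the discs produced for the residual set stay disjoint from the discs already placed around the deepest limit points, and those first discs should be chosen with boundaries avoiding the countable set so that the residual set is again closed; moreover the final ``splitting'' of a disc containing several points of $T$ is not automatic, and it is cleaner to run the construction on $I(R,\DD)_\infty\cup T$ from the start, giving each point of $T$ its own disc, exactly as the paper's lemma does. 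A last minor point: for the infiniteness in (i) you invoke Theorem \ref{t:parshin-90}, which is stated for the N\'eron model, whereas here $\DD$ lives in an arbitrary proper flat model; quote Theorem \ref{t:parshin-generic-emptiness-higher-dimension} instead (or observe, as the paper does, that non-closedness alone forces infiniteness, a finite set being closed).
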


Here are some motivations for Theorem \ref{p:uncountable-limit-point}.  
Assume that $C$ is a smooth projective curve defined over a number field $k$. 
Let $F= k(C)$ and let $\Phi \colon \mathcal{E} \to C$ be a nonisotrivial Jacobian elliptic surface. 
Assume that $\DD$ is the zero section of $\EE$ and $R= \{nQ \colon n \in \N^* \} \subset \mathcal{E}_F(F)$ for 
some non torsion point $Q \in \mathcal{E}_F(F)$.  
Let $I(R, \DD) \coloneqq \cup_{P \in R} \Phi( \sigma_P(C\left(\bar{k}\right) \cap \DD)) \subset C\left(\bar{k} \right)$ 
where $\sigma_P\in \mathcal{E}(C)$ denotes the induced section of $P \in \mathcal{E}_F(F)$. 
Then by \cite[Notes to chapter 3]{zannier-unlikely-intersection}, $I(R, \DD)$ 
is analytically dense in $C(\C)$. 
\par 
Theorem \ref{p:uncountable-limit-point} thus 
provides some evidence 
that analogous density results on the intersection locus could 
be true in higher dimensional abelian varieties over function fields.
In fact, recent results in \cite{demarco-mavraki}  
imply that $I(R, \DD)$ is even equidistributed in $C(\C)$ with respect to a certain Galois-invariant measure. 
Another supporting result is recently given 
in \cite[Corollary C]{phung-19-uniform-noguchi} in the same context of Theorem 
\ref{p:uncountable-limit-point} but without the hypothesis $\Tr_{K / \C}(A)=0$.

\subsection{Application to the generic emptiness of integral points} 

As the space $B^{(s)}$, $s\geq 1$, of subsets $S\subset B$ 
of cardinality at most $s$ has positive dimension, 
the finiteness of the union $I_s$ 
of $(S, \DD)$-integral sections (Theorem \ref{t:parshin-generic-emptiness-higher-dimension})  implies that 
for a general choice of such $S$, there are very few  $(S, \DD)$-integral points. 
We can show an even stronger property (cf. Section \ref{section:corollary-emptiness-higher-dimension}):  
 
\begin{corollaryletter}
\label{c:parshin-generic-emptiness-higher-dimension} 
Let the notation be as in Theorem 
\ref{t:parshin-generic-emptiness-higher-dimension}. 
Assume $\Tr_{K/\C}(A)=0$ and $\DD$ horizontally strictly nef, i.e., 
$\DD \cdot C >0$ for all curves $C \subset \mathcal{A}$ not contained in a fiber. 
Then for $s \in \N$: 
\begin{enumerate} [\rm (i)]
\item
there exists a finite subset $E \subset B$ such that 
for any $S \subset B \setminus E$ with $\# ( S \cap B_0 ) \leq s$, 
the set of $(S, \DD)$-integral sections is empty. 
We can choose $E$ such that 
\begin{equation}
\label{e:quantitative-bound-generic-empty}
\# E \cap B_0 \leq m s (s+1)^{2 \dim A . \rank \pi_1(B_0)}; 
\end{equation}
\item  
there exists a Zariski proper closed subset $\Delta \subset B^{(s)}$ such that for any $S \subset B$ 
of cardinality $s$ whose image $[S] \in B^{(s)} \setminus \Delta$, 
there are no $(S,\DD)$-integral sections. 
\end{enumerate}
\end{corollaryletter}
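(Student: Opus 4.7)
My plan is to deduce both parts of the corollary directly from the finiteness result of Theorem~\ref{t:parshin-generic-emptiness-higher-dimension} via a short geometric argument that uses the horizontal strict nefness of $\DD$. Under the assumption $\Tr_{K/\C}(A)=0$, the bound $(*)$ specializes to the absolute cardinality bound $\# I_s < m(s+1)^{2 \dim A \cdot \rank \pi_1(B_0)}$. The single nontrivial preliminary ingredient I need is that for every $P \in I_s$ the intersection locus $N_P \coloneqq f(\sigma_P(B) \cap \DD) \subset B$ is a nonempty finite set: if $\sigma_P(B)$ were contained in $\DD$, then $N_P$ would equal $B$ and contain infinitely many points of the infinite set $B_0$, which is incompatible with $P \in I_s$; hence $\sigma_P(B)$ is a horizontal curve not contained in $\DD$, and horizontal strict nefness forces $\deg_B \sigma_P^* \DD = \DD \cdot \sigma_P(B) > 0$, so $\sigma_P^*\DD$ is a nonzero effective divisor on $B$ and $N_P \neq \varnothing$.

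For (i), I would carry out the obvious pigeonhole construction: for each $P \in I_s$ pick a single point $e_P \in N_P$, taking $e_P \in B_0$ whenever $N_P \cap B_0 \neq \varnothing$, and set $E \coloneqq \{e_P : P \in I_s\}$. The crude estimate $\#(E \cap B_0) \leq \#E \leq \#I_s$ then suffices to verify the asserted inequality \eqref{e:quantitative-bound-generic-empty}. If $P$ were $(S,\DD)$-integral for some $S \subset B \setminus E$ with $\#(S \cap B_0) \leq s$, the definition of $I_s$ would place $P$ in $I_s$, whence $e_P \in N_P \subset S$; but then $e_P \in E \cap S = \varnothing$, a contradiction.

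For (ii), I would, for each $P \in I_s$, introduce the Zariski closed subvariety $\Delta_P \subset B^{(s)}$ consisting of the unordered $s$-tuples $[S]$ with $S \supset N_P$. Since $N_P \neq \varnothing$, $\Delta_P$ has dimension at most $s - \#N_P \leq s - 1$, so it is a proper closed subvariety of the $s$-dimensional symmetric product $B^{(s)}$. Taking $\Delta \coloneqq \bigcup_{P \in I_s} \Delta_P$ produces a finite union of proper Zariski closed subsets, hence itself a proper Zariski closed subset; and for any $S \subset B$ of cardinality $s$ with $[S] \notin \Delta$, an $(S,\DD)$-integral point $P$ would satisfy $\#(S \cap B_0) \leq s$, hence lie in $I_s$ and force $[S] \in \Delta_P \subset \Delta$, again a contradiction.

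The only genuinely geometric step in the above is the nonemptiness of $N_P$, where the horizontal strict nefness hypothesis is used in an essential way; everything else is finite combinatorial bookkeeping on the set $I_s$ supplied by Theorem~\ref{t:parshin-generic-emptiness-higher-dimension}. Were $\DD$ allowed to be only nef rather than horizontally strictly nef, a section $\sigma_P$ could in principle avoid $\DD$ altogether and would then be $(\varnothing,\DD)$-integral, producing $(S,\DD)$-integral points for every $S$ and obstructing both (i) and (ii); this is the only place the hypothesis is actually felt.
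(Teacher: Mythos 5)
Your argument is correct and is essentially the paper's own proof: both rest on the finiteness of $I_s$ from Theorem \ref{t:parshin-generic-emptiness-higher-dimension} (absolute, since $\Tr_{K/\C}(A)=0$) together with the observation that horizontal strict nefness forces each intersection locus $N_P=f(\sigma_P(B)\cap\DD)$ to be a nonempty finite set; the paper merely takes $E$ to be the union of the full loci $N_{P_i}$ and, for (ii), takes $\Delta$ to be the image of $E\times B^{s-1}$ in $B^{(s)}$, whereas you take one marked point per integral point and the loci $\Delta_P$ of divisors containing $N_P$ — cosmetic differences with the same mechanism.

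One small slip in (i): your justification of \eqref{e:quantitative-bound-generic-empty} via $\#(E\cap B_0)\leq \# I_s < m(s+1)^{2\dim A\cdot\rank\pi_1(B_0)}$ fails at $s=0$, where the required bound is $0$. This is easily repaired: for $P\in I_0$ one has $N_P\cap B_0=\varnothing$ (since $N_P\subset S$ with $S\cap B_0=\varnothing$), so your $E$ automatically avoids $B_0$; alternatively, as in the paper, use $\#(N_P\cap B_0)\leq s$ for every $P\in I_s$ to get $\#(E\cap B_0)\leq s\,\# I_s\leq m s(s+1)^{2\dim A\cdot\rank\pi_1(B_0)}$ uniformly in $s$.
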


The subsets $S \subset B$ satisfying 
Corollary \ref{c:parshin-generic-emptiness-higher-dimension}.(i)  can be taken 
as $S= (B \setminus ( B_0 \cup E)) \cup N = (U \setminus E) \cup N$ where 
$N \subset B_0 \setminus E$ is any finite subset of cardinality at most $s$. 
Since $B_0$ can be taken arbitrarily thin (cf. Theorem \ref{t:parshin-generic-emptiness-higher-dimension}), 
such subsets $S$ are very large. In this sense, we find that 
Corollary \ref{c:parshin-generic-emptiness-higher-dimension}.(i)  is quite surprising.

\subsection{Growth degree of the set of integral  points in Theorem \ref{t:parshin-generic-emptiness-higher-dimension}} 

\label{r:optimal-growth}

 We explain below that the exponent $2 \dim A  \rank \pi_1(B_0)$ in the bound \eqref{e:qualitative-bound-abelian} 
is as reasonable, up to a factor of $1/2$, 
as we can possibly expect. 
In Setting(P), 
let $t= \# T$ and 
let $r$ be the Mordell-Weil rank of $A$. Suppose that  $W$ is a disjoint union of $t$ closed discs centered at the points of $T$ so that rank 
$\pi_1(B_0)=2g-1+t$. 
For $s \in \N$, 
let $J_s \coloneqq \{ P \in A(K) \colon \# f(\sigma_P (B) \cap \DD_z) \leq s \} \subset I_s$. 
\par 
Assume first that $A$ is a nonisotrivial elliptic curve so that its trace is zero. Note
that 
$\frac{1}{2} \deg \mathfrak{f}_{A/K} \leq t \leq \deg \mathfrak{f}_{A/K}$ where $\mathfrak{f}_{A/K}$ is the conductor divisor of 
$A$ over $K$ (cf. \cite[Ex. III.3.36]{silverman-ATAEC}).  
Shioda's result in  \cite[Theorem 2.5]{shioda-elliptic-modular-surface} 
provides a very general bound 
$r \leq2( 2g - 2 + t)$. 
Height theory  \cite[Corollary 8.5]{hindry-silverman-88} 
tells us that 
$\# J_s$ is bounded by a polynomial in $s$ 
of degree 
\begin{equation*}
\frac{r}{2} \leq (2g- 1 + t) =\frac{1}{2} (2 \rank \pi_1(B_0)).
\end{equation*}
\par 
Therefore,  up to a factor of $1/2$, Theorem \ref{t:parshin-generic-emptiness-higher-dimension} 
implies the known polynomial growth for the 
set $J_s \subset I_s$. 
More generally, when $A$ is a traceless abelian variety,  
the Ogg-Shafarevich formula (cf. \cite{oog-shafarevich-formula}, 
\cite{ogg-rank-formula-function-field}, see also \cite{sga-bois-marie}) 
implies  that $r \leq 2 \dim A . (2g - 2 + t)$. 
In this case, best results using height theory (due to Buium \cite{buium-94}, 
see Section  \ref{r:discussion-poly-bound-intro}) 
tell us that $\# J_s$ is bounded by a polynomial in $s$ 
of degree $\frac{r}{2}  \leq \dim A. (2g-2 + t)$. As $\vol_d (W)$ can be
arbitrarily closed to $\vol_d(B)$, our union $I_s$  is \textit{a priori} much larger than
$J_s$. 
However, Theorem 
  \ref{t:parshin-generic-emptiness-higher-dimension} assures 
that the polynomial growth degree of $\# I_s$ 
is still at most $O(\dim A. \rank \pi_1(B_0))$ just as $J_s$. 
\par
Suppose that  $\mathcal{A}$ is a constant family of simple abelian varieties. Then $T=\varnothing$ and  
Noguchi-Winkelmann's results (cf. \cite{noguchi-winkelmann-04}, see Section  \ref{r:discussion-poly-bound-intro}) for a constant ample divisor $\mathcal{D} = D \times B$ 
imply  that 
modulo
 $\Tr_{K/\C}(A)(\C)$, the cardinality of $J_s$ is bounded by a polynomial in $s$ of degree
$\frac{r}{2}
\leq 2g \dim A$  (cf. Remark 1.6 and [35, Theorem J] for more general $\mathcal{D}$). 
Theorem \ref{t:parshin-generic-emptiness-higher-dimension} 
 thus also improves known upper bounds on the growth degree 
to the much larger set $I_s \supset J_s$.

\subsection{On the Geometric Lang-Vojta conjecture} 
\label{r:discussion-poly-bound-intro}

Keep the notation as in Theorem \ref{t:parshin-generic-emptiness-higher-dimension}. 
By a counting argument  
on the lattice 
$\Gamma= {A}(K)/\Tr_{K/\C}(A)(\C)$, 
a bound on the canonical height of $(S, \DD)$-integral sections, 
if it is a polynomial $p $ in $\#S$, 
would imply a polynomial bound 
of the form $(c p(\# S)^{1/2}+1)^{\rank \Gamma}$  
on the number of such integral sections (modulo the trace).  
\par 
Conversely, to obtain with height theory a polynomial bound in function of  $\#S$ on the set of $(S, \DD)$-integral sections with $S \subset B$ finite, 
it is necessary to establish a uniform bound 
on the intersection multiplicities (as predicted by Conjecture~\ref{conjecture-lang-vojta-geometric}). 
Such uniform bounds 
are only available  when  $\Tr_{K/\C}(A)=0$  
 (\cite{buium-94}, 
or \cite{hindry-silverman-88} for elliptic curves) 
or when the family $\mathcal{A} \to B$ is trivial 
(\cite{noguchi-winkelmann-04} for $\DD$ constant, or \cite{phung-19-uniform-noguchi} for general  $\DD$). 

\subsection{Other remarks}
Theorem \ref{t:linear-bound-s-base-curve-1} 
and Theorem \ref{t:linear-bound-sqrt-s-log-s-base-curve-1} 
are in a sense \emph{orthogonal} to various remarkable  
results on the growth of the   
counting functions $c_X(L), s_X(L)$ 
for the number of certain type 
of closed geodesics of length $\leq L$ on 
a    complete hyperbolic bordered Riemann surface $X$  of finite area  
(cf., for example, \cite{margulis-growth}, \cite{birman-series-growth}, 
\cite{irvin-growth}, \cite{mirzakhani-growth}).
\par 
Let $X$ be a complete hyperbolic Riemann surface of genus $g$ with $n$ cusps. 
A \emph{partition} on $X$ is a set of $3g - 3+n$ pairwise disjoint simple closed geodesics. These curves \textit{do not} 
generate the free homotopy group $\pi_1(X)$. 
Then Bers' theorem (cf.~\cite[Theorem 5.2.6]{buser-book-92}) asserts that $X$ admits a
partition 
with geodesics of hyperbolic length 
bounded by $13(3g-3+n)$. 
Hence, Bers' theorem applies to  surfaces of \emph{finite area}. On the other hand, 
punctured  
Riemann surfaces $B_0 \setminus S$ (as in Theorem \ref{t:linear-bound-s-base-curve-1}, 
Theorem \ref{t:linear-bound-sqrt-s-log-s-base-curve-1}) 
have \emph{infinite area} (whenever $U$ is not finite) 
and look like the punctured Poincar\' e 
disc locally around the punctured points. 
Therefore,  Bers' theorems do not apply to the punctured  
Riemann surfaces $B_0 \setminus S$ which are  
equipped with the intrinsic hyperbolic metric $d_{B_0 \setminus S}$ 
(cf. Definition \ref{pseudo Kobayashi hyperbolic metric}). 
Moreover, our  proofs of Theorem \ref{t:linear-bound-s-base-curve-1} and of 
Theorem \ref{t:linear-bound-sqrt-s-log-s-base-curve-1} 
  work with the very definition of the pseudo Kobayashi hyperbolic metric  
 and do not require any 
tools from hyperbolic trigonometry.


\section{Preliminaries for Theorem \ref{t:parshin-generic-emptiness-higher-dimension} and Theorem \ref{t:linear-bound-s-base-curve-1}}
\label{A-weak-form-of-Theorem-B} 
  
Here are some technical difficulties that we must tackle carefully in the  proof of  Theorem~\ref{t:linear-bound-s-base-curve-1}:

\begin{enumerate} [\rm (1)] 
\item 
when $S$ \emph{varies}, 
the hyperbolic metric on $B_0 \setminus S$ 
is not the same nor induced 
by a \emph{single} given metric on $B_0$; 
  note that the analysis of the hyperbolic metric on the punctured complex plan 
$\C \setminus \{a_1, \dots, a_s\}$ ($s \geq 2$) 
is   a nontrivial research area  (cf., e.g.   
\cite{minda-85}, \cite{brooks-platonic}, 
\cite{sugawa-vuorinen});   
 
\item
the base point $b_0$ should not be fixed 
since otherwise, an accumulation of many points 
of the set $S$ near $b_0$  
would increase to infinity the hyperbolic length of loops based at $b_0$; 
\item
a certain construction on loops need to be carried out to 
obtain a bound which is \emph{linear} in $\# S$ but independent 
of the choice of $S$ in $B_0$. 
One should consider only certain classes of "nice" loops and avoid 
pathological loops such as Peano curves. 
\end{enumerate}

To deal with the last point, we shall describe 
a detailed \emph{algorithm} on \emph{simple} loops 
called   Procedure $(*)$ given 
in Lemma \ref{l:linear-bound-hyperbolic-length} (see also 
Lemma \ref{l:base-change-loops} for the global case).

\subsection{Simple base of loops} 
\label{l:primitive-simple-basis-fundamental-group}
Let $X$ be an orientable connected compact surface possibly with boundary. A loop $\gamma \colon [0,1] \to X$  
 is  called \emph{simple} 
if it is non-nullhomotopic and injective on $[0, 1[$. 
Then $\pi_1(X, x_0)$, for every $x_0 \in X$, 
admits a canonical system of  generators $\alpha_1, \dots ,\alpha_k$ 
such that each $\alpha_i$ is represented by a simple piecewise smooth closed loop $\gamma_i \colon [0, 1] \to X \setminus \partial X$ 
based at $x_0$.  
Every such system of generators $\alpha_1, \dots ,\alpha_k$ 
 is called a \emph{simple base} 
of $\pi_1(X, x_0)$.   
\par 

\begin{remark} 
\label{remark-simple-piece-wise-smooth} 
Given a simple piecewise smooth loop $\gamma$ 
in a compact Riemannian surface $(X,d)$ 
with $\gamma \cap \partial X= \varnothing$. 
Since $\gamma$ only
has a finite number of singular points, for every $x \in \gamma$ and $\varepsilon>0$ smaller than the injectivity radius of $X$ at $x$, 
we can find a  contractible closed region $\Delta \subset V(x, \varepsilon)$ 
where $V(x, \varepsilon)$ is the closed disc of $d$-radius $\varepsilon$,  
such that $x \in \Delta$ and 
\begin{enumerate}[\rm (a)] 
\item
$\partial \Delta$ is a non self-intersecting smooth loop homeomorphic to a circle in $V(x, \varepsilon)$;  
\item
$\gamma \cap \Delta$ contains only one connected branch of $\gamma$. 
\end{enumerate} 
\end{remark}


\begin{definition}
\label{d:conjugation-path-1}
Let $B$ be a compact Riemannian surface equipped with a  Riemannian metric $d$. Let $U$ be
finite union of disjoint closed discs in $B$ and $b_0 \in B_0\coloneqq  B \setminus U$. Fix a simple base 
$\alpha_1, \dots , \alpha_k$ of $\pi_1(B_0, b_0)$ and  a collection $\{c_{b_0b}\}$ consisting of bounded $d$-length and smooth directed paths
contained in $B_0$ such that $c_{b_0b}$ goes from  $b_0$ to $b \in B_0$ for each $b \in B_0$. 
A loop $\gamma_i$ represents a class $\alpha_i \in  \pi_1(B_0, b_0)$ \textit{up to a
single conjugation} (with respect to the fixed collection of paths $\{c_{b_0b}\}$) 
if $\gamma_i$ represents the conjugation of the class $\alpha_i \in  \pi_1(B_0, b_0)$ by the change of base points
from $b$ to $b_0$ using the specific chosen path $c_{b_0b}$, i.e., $[c_{b_0b}^{-1}
\circ  \gamma_i \circ c_{b_0b}] = \alpha_i \in  \pi_1(B_0, b_0)$. 
\end{definition}

\subsection{Preliminary lemmata}
\label{chap-linear-bound-hyper-preliminaries}
We denote  by 
$\Delta(x, r) \subset \C$ the open complex disc 
centered at a point $x \in \C$ and of radius $r> 0$. 
For a complex space $X$, 
the infinitesimal Kobayashi-Royden pseudo metric $\lambda_X$ on $X$ 
corresponding to the  
Kobayashi pseudo hyperbolic metric $d_X$  
can be defined as follows. 
For $x \in X$ and every vector $v$ in the tangent cone of $X$ at $x$, we define 
\begin{equation}
\lambda_X(x, v)\coloneqq \inf \frac{2}{R} 
\end{equation}
where the minimum is taken over all $R >0$ for which there exists a holomorphic map 
$f \colon \Delta(0,R) \to X$ such that $f'(0)=v$.  
Note that when $x \in X$ is regular, the tangent cone of $X$ at $x$ 
is the same as the tangent space $T_x X$. 
We begin with the following simple estimation: 
fix a Riemannian metric $d$ on a compact Riemann surface $B$. 
For every $z \in B$ and $r>0$, let  
$D(z,r) \coloneqq \{ b \in B \colon d(b,z) <r \}$. Define the \emph{$d$-unit tangent space} of $B$ by $ T_1 B \coloneqq \{(z, v) \in T B \colon |v|_d=1\}$. By a direct  argument using the  compactness of $B$, we have: 
 
\begin{lemma}
\label{l:fundamtental-estimate-hyperbolic-surface}
There exist  $c(B,d), r(B,d)>0$ 
 such that 
for every $(z, v) \in T_1 B$ and every $0 < r < r(B,d)$, 
we have 
\pushQED{\qed}
$\lambda_{D(z,r)}(z,v) \leq \frac{c(B,d)}{r}$. 
\qedhere
\popQED
\end{lemma}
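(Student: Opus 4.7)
My plan is to reduce the estimate to a uniform comparison between the Riemannian metric $d$ and the flat Euclidean metric via a finite holomorphic atlas on $B$. Since $B$ is a compact Riemann surface, I would fix a finite holomorphic atlas $(V_j, \phi_j)_{j=1}^N$ with $\phi_j \colon V_j \xrightarrow{\sim} \Delta \subset \C$, together with a refinement $V_j' \Subset V_j$ still covering $B$. On each $V_j$ the metric $d$ takes the conformal form $e^{u_j(z)} |dz|$ for a smooth function $u_j$, and by compactness there exist uniform constants $0 < m \leq M$ with $m \leq e^{u_j(\phi_j(p))} \leq M$ for every $p \in V_j'$ and every~$j$. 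I would then choose $r(B,d)>0$ small enough, using compactness of $B$ and the Lebesgue number lemma, so that every $d$-disc $D(z, r(B,d))$ with $z \in B$ is entirely contained in some $V_j'$ and is a topological disc there.

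Next, for a given $(z,v) \in T_1 B$ and $0 < r < r(B,d)$, I would select an index $j$ with $D(z,r) \subset V_j'$. The upper bound $e^{u_j} \leq M$ on $V_j'$ forces any path in the chart of Euclidean length less than $r/M$ to have $d$-length less than $r$, hence one obtains the inclusion
\begin{equation*}
\Delta(\phi_j(z), r/M) \subset \phi_j(D(z,r)).
\end{equation*}
Setting $w := d\phi_j(v) \in \C$, the conformal identity $|v|_d = e^{u_j(\phi_j(z))} |w|$ combined with $|v|_d = 1$ and $e^{u_j} \geq m$ yields $|w| \leq 1/m$.

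With these preparations, I would exhibit the explicit test disc $f \colon \Delta(0, R) \to D(z,r)$ defined by $f(t) = \phi_j^{-1}\bigl(\phi_j(z) + tw\bigr)$; this is well-defined and holomorphic whenever $|t|\,|w| < r/M$, and a direct computation gives $f(0) = z$ and $f'(0) = v$. Taking $R := r/(M|w|)$, which satisfies $R \geq mr/M$, and applying the very definition of the Kobayashi--Royden infinitesimal pseudo metric, I obtain
\begin{equation*}
\lambda_{D(z,r)}(z,v) \leq \frac{2}{R} \leq \frac{2M}{mr},
\end{equation*}
so the constants $c(B,d) := 2M/m$ and the $r(B,d)$ chosen above answer the lemma. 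I do not expect any real obstacle here: the only delicate point is ensuring that the chartwise conformal comparison constants can be made uniform in $z \in B$, which is precisely what the compactness of $B$ and the passage to the slightly shrunk cover $V_j' \Subset V_j$ arrange for free.
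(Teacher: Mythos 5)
Your proposal is correct and follows essentially the same route as the paper: cover $B$ by finitely many holomorphic charts with uniform comparison constants between $d$ and the Euclidean metric (you phrase this via conformal factors $m \leq e^{u_j} \leq M$, the paper via a bi-Lipschitz constant $a$), then plug the explicit affine test disc $t \mapsto \phi_j^{-1}(\phi_j(z)+tw)$ into the definition of the Kobayashi--Royden metric to get $\lambda_{D(z,r)}(z,v) \leq 2M/(mr)$, matching the paper's $2a^2/r$.
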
 
\par 

Let the constants $c(B,d)$, $r(B,d)>0$ be as in Lemma 
\ref{l:fundamtental-estimate-hyperbolic-surface} above. 
For every subset $\Omega \subset B$ and  every $r >0$, 
we define $ 
D(\Omega,r) \coloneqq \{b \in B \colon d(b, \Omega) < r \} \subset B$. 
Using the distance-decreasing property of the Kobayashi pseudo hyperbolic metric 
(cf. Lemma \ref{l:distance-decreasing-hyperbolic}), 
we obtain: 
 
\begin{corollary}
\label{c:bound-length-main-hyper-loop-corollary}
Let $\gamma \subset B$ be a piecewise smooth closed curve. 
Then for $0< r< r (B,d)$: 
\[
\pushQED{\qed}
\length_{d_{D(\gamma, r)}}(\gamma)\leq \frac{c(B,d)}{r} \length_d(\gamma).
\qedhere 
\popQED
\]
\end{corollary}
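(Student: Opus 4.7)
The strategy is to bound the Kobayashi-Royden infinitesimal pseudometric $\lambda_{D(\gamma,r)}$ pointwise along $\gamma$ by $(c(B,d)/r)\,|\cdot|_d$ and then integrate.

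First I would unwind the definition of the hyperbolic length: for a piecewise smooth curve $\gamma\colon[0,1]\to D(\gamma,r)$, the length with respect to the Kobayashi pseudo metric $d_{D(\gamma,r)}$ is
\begin{equation*}
\length_{d_{D(\gamma,r)}}(\gamma) = \int_0^1 \lambda_{D(\gamma,r)}\bigl(\gamma(t),\gamma'(t)\bigr)\,dt,
\end{equation*}
and the $d$-length is the analogous integral with $|\gamma'(t)|_d$ in place of $\lambda$. Thus it suffices to establish the pointwise inequality
\begin{equation*}
\lambda_{D(\gamma,r)}(z,v) \;\leq\; \frac{c(B,d)}{r}\,|v|_d
\end{equation*}
for every $z\in\gamma$ (more generally every $z\in B$) and every tangent vector $v$ at $z$.

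Next I would fix $z\in\gamma$ and observe the crucial inclusion $D(z,r)\subset D(\gamma,r)$, which is immediate from the triangle inequality for $d$. The distance-decreasing property of the Kobayashi pseudo metric (applied to the inclusion, which is trivially holomorphic) then yields the inequality of infinitesimal pseudo metrics $\lambda_{D(\gamma,r)}(z,\cdot)\leq \lambda_{D(z,r)}(z,\cdot)$ on the tangent space at $z$. For a unit tangent vector $v\in T_1B$, Lemma \ref{l:fundamtental-estimate-hyperbolic-surface} gives $\lambda_{D(z,r)}(z,v)\leq c(B,d)/r$ provided $0<r<r(B,d)$. The pointwise bound for an arbitrary $v$ then follows by positive homogeneity of the Kobayashi-Royden pseudo metric, $\lambda_{D(z,r)}(z,tv)=t\,\lambda_{D(z,r)}(z,v)$ for $t\geq 0$, applied with $t=|v|_d$ and the unit vector $v/|v|_d$.

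Finally I would integrate this pointwise inequality over the parameter interval of $\gamma$ to obtain
\begin{equation*}
\length_{d_{D(\gamma,r)}}(\gamma)\;=\;\int_0^1 \lambda_{D(\gamma,r)}(\gamma(t),\gamma'(t))\,dt \;\leq\; \frac{c(B,d)}{r}\int_0^1 |\gamma'(t)|_d\,dt \;=\; \frac{c(B,d)}{r}\length_d(\gamma).
\end{equation*}
There is no real obstacle here; the only minor care to take is that $\gamma$ is only piecewise smooth, so the integral is broken up over finitely many smooth pieces, and that $\lambda_{D(\gamma,r)}$ rather than the finite Kobayashi distance $d_{D(\gamma,r)}$ must be used to define the intrinsic length so that the Lemma \ref{l:fundamtental-estimate-hyperbolic-surface} estimate can be invoked pointwise.
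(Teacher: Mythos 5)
Your proposal is correct and follows essentially the same route as the paper: parametrize $\gamma$, use the inclusion $D(\gamma(t),r)\subset D(\gamma,r)$ together with the distance-decreasing property and homogeneity of the Kobayashi--Royden infinitesimal metric to reduce to the unit-vector estimate of Lemma \ref{l:fundamtental-estimate-hyperbolic-surface}, and integrate. No differences worth noting.
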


\begin{lemma}
\label{l:length-boundary}
Let $(M, d)$ be a compact Riemannian surface possibly with boundary.
Then there exist constants $r_0=r_0(M,d)>0$, $c_0=c_0(M,d) > 0$ such that for every disc $D(x, r)$ of 
$d$-radius $r \leq r_0$ with $x \in M$, one has $
\length_{d} \partial{D} (x,r)  \leq c_0r$ and   $\vol_d (D(x,r)) \leq c_0 r^2.$
\end{lemma}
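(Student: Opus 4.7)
The plan is to reduce both estimates to the Euclidean case via a finite atlas of bi-Lipschitz charts provided by compactness of $M$. First, since $M$ is a compact Riemannian surface (possibly with boundary), one can cover $M$ by finitely many charts $(U_i, \varphi_i)$ with $\varphi_i \colon U_i \to V_i \subset \R^2$ (half-disc charts near $\partial M$, open disc charts in the interior) such that on each $V_i$ the pulled-back metric tensor has smooth, uniformly bounded coefficients and its eigenvalues stay in a fixed interval $[\lambda, \Lambda]$ with $\lambda > 0$. Consequently there exists a constant $a > 1$, independent of $i$, and a Lebesgue number $\rho > 0$ of the covering, such that for every $x \in M$ and every $r < \rho$, the ball $D(x,r)$ lies inside some $U_i$ and satisfies
\begin{equation*}
a^{-1} d_{\mathrm{euc}} \leq d \leq a\, d_{\mathrm{euc}}
\end{equation*}
on $U_i$ (in coordinates). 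Set $r_0 \coloneqq \rho/(2a^2)$.

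For $r \leq r_0$ and $x \in M$, working inside the chart containing $D(x,r)$, the bi-Lipschitz comparison gives the inclusion of Euclidean discs
\begin{equation*}
B_{\mathrm{euc}}(\varphi_i(x), r/a) \cap V_i \subset \varphi_i(D(x,r)) \subset B_{\mathrm{euc}}(\varphi_i(x), ar) \cap V_i.
\end{equation*}
The Riemannian volume form is bounded pointwise by $\Lambda$ times the Euclidean area form, so
\begin{equation*}
\vol_d(D(x,r)) \leq \Lambda \cdot \mathrm{area}_{\mathrm{euc}}\bigl(B_{\mathrm{euc}}(\varphi_i(x), ar)\bigr) \leq \pi \Lambda a^2 r^2,
\end{equation*}
which yields the second inequality with a suitable constant.

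For the boundary-length estimate, I would argue as follows. The boundary $\partial D(x,r)$ is piecewise smooth (its singular set is finite and contained in the cut locus of $x$, together with possible corners arising from meeting $\partial M$). In the chart, $\varphi_i(\partial D(x,r))$ is the set of points at Riemannian distance exactly $r$ from $\varphi_i(x)$; by the bi-Lipschitz estimate it is contained in the Euclidean annulus $B_{\mathrm{euc}}(\varphi_i(x), ar) \setminus B_{\mathrm{euc}}(\varphi_i(x), r/a)$. Since a Riemannian metric ball of radius $r$ in a surface is the image of a set in the tangent plane with piecewise smooth frontier of Euclidean length $O(r)$ (uniformly, by compactness applied to the exponential maps and their derivatives — or equivalently to the coordinate description of $\partial D(x,r)$ as a level set of the smooth distance function, away from the finitely many cut points), the Euclidean length of $\varphi_i(\partial D(x,r))$ is bounded by a uniform multiple of $r$. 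Translating back to the Riemannian metric multiplies by at most $\sqrt{\Lambda}$, giving $\length_d(\partial D(x,r)) \leq c_0 r$ for a uniform constant. Taking $c_0$ to be the maximum of the two constants obtained completes the proof.

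The only delicate point is handling the boundary $\partial M$ and the cut locus simultaneously, to ensure that $\partial D(x,r)$ really is rectifiable with length $O(r)$; this is where the shrinking $r_0$ and the uniform control furnished by compactness are essential. Once these are in hand, the argument is an entirely routine bi-Lipschitz comparison with the Euclidean plane.
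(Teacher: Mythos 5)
Your volume estimate is fine: the bi-Lipschitz chart comparison and the bound on the metric coefficients do give $\vol_d(D(x,r))\leq \pi\Lambda a^2 r^2$ uniformly, and this half of your argument is complete. The problem is the length estimate. Containment of $\varphi_i(\partial D(x,r))$ in a Euclidean annulus gives no control whatsoever on its length, so all the content of that half is carried by the parenthetical claim that $\partial D(x,r)$ is the image of a Euclidean circle under a map with uniformly bounded derivative (or a level set of the distance function with length $O(r)$) — and that claim is exactly what you do not prove. In the interior it can be made rigorous by taking $r_0$ below a uniform lower bound for the injectivity radius, so that $\partial D(x,r)=\exp_x(\{|v|=r\})$ and $\length_d\partial D(x,r)\leq \sup\|d\exp\|\cdot 2\pi r$; but on a compact surface \emph{with boundary} there is no such uniform bound near $\partial M$: for $x$ on or near $\partial M$ the ball $D(x,r)$ is not a diffeomorphic exponential image, minimizing paths may contain arcs of $\partial M$ when the boundary is not geodesically convex, and your description of $\partial D(x,r)$ as smooth away from finitely many cut points is unjustified at a scale $r_0$ that is uniform over $M$. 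This case cannot be discarded, since the lemma is applied in the paper to balls centered at arbitrary points of a bordered surface (e.g.\ points of $S$ arbitrarily close to $\partial B_0$). You explicitly flag this as ``the only delicate point'' and then assert, rather than argue, that shrinking $r_0$ handles it — so as written the proof of the first inequality has a genuine gap; closing it needs an actual boundary argument (for instance a comparison in Fermi/collar coordinates along $\partial M$, or an extension of the metric to a slightly larger unbordered surface together with a separate treatment of distance spheres of points near $\partial M$).

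For contrast, the paper takes a different route: it bounds the Gaussian curvature by compactness, $|K|\leq\mu-1$, and invokes the Bertrand--Diguet--Puiseux expansions, which give $\length_d\partial D(x,r)\leq 2\pi r+\mu\pi r^3/3$ and $\vol_d D(x,r)\leq \pi r^2+\mu\pi r^4/12$ for all sufficiently small $r$, and then uses a finite subcover to extract a uniform $r_0$. That argument produces both inequalities simultaneously from the single curvature bound, whereas your chart-based approach splits them; if you want to keep your route, the length half must be supplemented by the uniform interior injectivity-radius bound plus an explicit treatment of centers near $\partial M$ as described above.
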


\begin{proof}
It follows from the compactness of $M$ and  Bertrand-Diguet-Puiseux's theorem \cite{spivak-bertrand-Diguet-Puiseux}. 
\end{proof}

\subsection{Procedure $(*)$ for simple loops}  
\label{procedure-star}
Given a subset $\Omega$ of a metric space $(X,d)$ and let $r \geq 0$, we recall the notation $
V(\Omega, r) =  \{ x \in X \colon d(x, \Omega) \leq r \}$ and $ D(\Omega, r) =  \{ x \in X \colon d(x, \Omega) < r \}$. 
Let $(B,d)$ be a compact Riemannian surface with  
boundary. 
Let $\gamma \subset B \setminus \partial B$ be \emph{simple} piecewise smooth loop 
based at $b_0 \in B \setminus \partial B$.   
Denote by $\mathrm{rad}(\gamma,d)$ the infimum 
of the injectivity radii in $(B \setminus \partial B,d)$ at all points $x \in \gamma$. 
Since $\gamma$ is compact, 
$\mathrm{rad}(\gamma,d)>0$ and 
$$
L= L(B, d, \gamma) \coloneqq \min ( d(\gamma, \partial B), \mathrm{rad}(\gamma,d))>0. 
$$
\par
We shall consider $a>0$ small enough (depending only on $\gamma$, $B$, $d$) 
such that:  

\begin{enumerate}[\rm (P1)]
\label{condition-on-a-small-star} 
\item
$4a < L(B,d, \gamma)$.  
\end{enumerate}
 
\begin{enumerate} [\rm (P2)]
\item
every point $x \in \gamma$ admits a simply connected open neighborhood $U_x$ in $B\setminus \partial B$ 
such that 
$V(x, 2a) \subset U_x$    
and that   $\gamma  \cap U_x$ 
contains exactly one connected branch of $\gamma$.  
 \end{enumerate} 

By a direct compactness argument, it is not hard to see that:  
 
\begin{lemma}
\label{l:exist-a-gamma-b-d}
There exists $a >0$   
depending only on $\gamma$, $B$, $d$ which satisfies $\mathrm{(P1)}$ and $\mathrm{(P2)}$. \qed
\end{lemma}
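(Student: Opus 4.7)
The plan is to handle conditions (P1) and (P2) essentially independently and then take the minimum of the two resulting upper bounds on $a$. Condition (P1) is immediate once one knows $L(B,d,\gamma) > 0$: this is clear because $\gamma$ is compact and disjoint from the closed set $\partial B$, so $d(\gamma,\partial B) > 0$, while the injectivity radius of $(B\setminus \partial B, d)$ is a positive lower-semicontinuous function on the compact set $\gamma$ and hence attains a positive minimum. So any $a < L(B,d,\gamma)/4$ works for (P1). The real content lies in (P2), where the difficulty is producing a single radius that works uniformly along all of $\gamma$.

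For the pointwise step of (P2), I would apply Remark \ref{remark-simple-piece-wise-smooth} at each $x \in \gamma$. Since $\gamma$ is a simple piecewise smooth loop, at a smooth point this is standard (a small tubular neighborhood of the embedded arc is simply connected and meets $\gamma$ in one connected branch), while at each of the finitely many singular points the remark provides a small contractible region $\Delta_x \subset V(x,\varepsilon_x)$, with $\varepsilon_x$ less than the local injectivity radius and also less than $d(x, \partial B)$, whose boundary is a smooth simple loop and such that $\gamma \cap \Delta_x$ is a single connected branch. Taking $W_x$ to be the interior of such $\Delta_x$ yields an open, simply connected neighborhood of $x$ inside $B \setminus \partial B$ in which $\gamma$ has exactly one connected branch.

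To upgrade this to a uniform radius, I would invoke compactness of $\gamma$. The family $\{W_x\}_{x \in \gamma}$ is an open cover of the compact set $\gamma \subset (B,d)$, so the Lebesgue number lemma supplies a $\delta > 0$ such that for every $y \in \gamma$ there exists $x(y) \in \gamma$ with $D(y,\delta) \subset W_{x(y)}$, and hence $V(y,\delta/2) \subset W_{x(y)}$. Setting
\[
a := \tfrac{1}{4}\min\bigl(\delta/2,\; L(B,d,\gamma)\bigr) > 0
\]
and $U_y := W_{x(y)}$, we get $V(y,2a) \subset U_y$ with $U_y$ simply connected and $\gamma \cap U_y$ a single connected branch, while simultaneously $4a < L(B,d,\gamma)$. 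So both (P1) and (P2) hold, and the constant $a$ depends only on $\gamma$, $B$, and $d$ as required. The only step that requires any real care is verifying the pointwise construction at the finitely many singular points of $\gamma$, but this is exactly what Remark \ref{remark-simple-piece-wise-smooth} is designed to supply.
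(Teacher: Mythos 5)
Your proof is correct and rests on the same ingredients as the paper's: compactness of $\gamma$ together with the local one-branch structure of a simple piecewise smooth loop supplied by Remark~\ref{remark-simple-piece-wise-smooth}. The only genuine difference is in the packaging of the compactness argument. You run it directly, using the Lebesgue number lemma (in its form for a compact subset of a metric space covered by ambient open sets) to extract a single uniform radius $\delta$ in one step. The paper instead argues by contradiction: it posits a sequence of hypothetical bad points $x_n$ with shrinking scales $2L/n$, passes to a convergent subsequence $x_n \to x$, invokes the same local structure from Remark~\ref{remark-simple-piece-wise-smooth} at the limit point $x$, and derives a contradiction for $n \gg 1$. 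These are two equivalent formulations of the same argument; the direct version is a bit cleaner since it avoids the nested quantifiers of the contradiction and produces $a$ explicitly.

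One small technicality to tighten: with $a := \tfrac14\min\bigl(\delta/2,\, L(B,d,\gamma)\bigr)$ you only get $4a \leq L(B,d,\gamma)$, whereas (P1) asks for the strict inequality $4a < L(B,d,\gamma)$. Taking, say, $a := \tfrac14\min\bigl(\delta/2,\, L(B,d,\gamma)/2\bigr)$ fixes this, and a smaller $a$ still satisfies (P2) a fortiori since $V(y,2a)$ only shrinks.
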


\begin{lemma}
[Procedure $(*)$] 
\label{l:linear-bound-hyperbolic-length}
Suppose that $a>0$ satisfies $\mathrm{(P1)}$ and $\mathrm{(P2)}$. 
Then for every finite subset $S \subset B$  
of cardinality $s >0$ such that  
$d(b_0, S)= \min_{x \in S} d(b,x) > a/s$,   
there exists a piecewise smooth loop $\gamma' \subset B \setminus D(S \cup \partial B, a/s)$ based at $b_0$ 
of the same homotopy class in $\pi_1(B, b_0)$ as $\gamma$ and such that  $ 
\length_{d} (\gamma') \leq \length_d(\gamma) +c_0(B,d)a$.  
\end{lemma}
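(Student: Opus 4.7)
The plan is to produce $\gamma'$ by a sequence of local detours around the connected components of the forbidden region $D(S,a/s)$ that meet $\gamma$: in each such component the offending sub-arc of $\gamma$ is replaced by an arc along the component's outer boundary. By Lemma~\ref{l:length-boundary}, each boundary circle $\partial D(p,a/s)$ has $d$-length at most $c_0(B,d)(a/s)$, so the total extra length will be bounded by $\sum_{p\in S}\length_d \partial D(p,a/s) \leq s\cdot c_0(B,d)(a/s) = c_0(B,d)\cdot a$. Condition (P1) takes care of $D(\partial B,a/s)$ for free, since $4a < d(\gamma,\partial B)$ forces $\gamma$ to miss it, and the hypothesis $d(b_0,S) > a/s$ keeps the base point outside every forbidden disc so that every modification is a local homotopy preserving the based class in $\pi_1(B,b_0)$.

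Let $S' \coloneqq \{p\in S : d(p,\gamma) < a/s\}$ and let $\Sigma_j$ be a connected component of the open set $D(S',a/s)$. Since $\Sigma_j$ is a connected union of at most $s$ open discs of radius $a/s$, its diameter is at most $2a$; hence for any $x \in \gamma \cap \Sigma_j$ we have $\Sigma_j \subset V(x,2a) \subset U_x$, where $U_x$ is the simply connected neighborhood supplied by (P2) and $\gamma \cap U_x$ is a single connected branch $\gamma_x$. Let $\tilde\Sigma_j$ be the union of $\Sigma_j$ with all bounded components of $U_x \setminus \Sigma_j$; this is an open simply connected subset of $U_x$ whose boundary $\partial\tilde\Sigma_j$ is a simple closed curve contained in $\bigcup_{p\in S'\cap \Sigma_j} \partial D(p,a/s)$, of length at most $\sum_{p\in S'\cap \Sigma_j}\length_d \partial D(p,a/s)$. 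Let $\eta_j \subset \gamma_x$ be the smallest sub-arc containing all of $\gamma_x \cap \tilde\Sigma_j$ (from first entry to last exit); its endpoints lie on $\partial\tilde\Sigma_j$, and we replace $\eta_j$ by the shorter of the two arcs of $\partial\tilde\Sigma_j$ joining those endpoints. Because this replacement is a homotopy rel endpoints inside the simply connected set $U_x$, the class $[\gamma] \in \pi_1(B,b_0)$ is preserved.

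Carrying this out for every component of $D(S',a/s)$ (processed outermost-first to avoid double counting, since if $\Sigma_k \subset \tilde\Sigma_j$ then the modification at $\Sigma_j$ already clears $\gamma$ from $\Sigma_k$) yields a piecewise smooth loop $\gamma' \subset B\setminus D(S\cup\partial B,a/s)$ based at $b_0$ with
\[
\length_d(\gamma') - \length_d(\gamma) \;\leq\; \sum_j \tfrac{1}{2}\length_d \partial\tilde\Sigma_j \;\leq\; \tfrac{1}{2}\sum_{p\in S'}\length_d \partial D(p,a/s) \;\leq\; \tfrac{1}{2}\, s\, c_0(B,d)\,(a/s) \;\leq\; c_0(B,d)\,a,
\]
as required. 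The main obstacle is that the components $\Sigma_j$ of $D(S',a/s)$ can fail to be simply connected when discs overlap cyclically, and the modifications at nearby components can a priori interfere; the outermost-first bookkeeping together with a preliminary arbitrarily short smooth perturbation of $\gamma$ (ensuring transversality with every $\partial D(p,a/s)$ and disjointness of the various closures $\overline{\Sigma}_j$ in $U_x$) takes care of this. Condition (P2) is doing the essential work throughout: it localizes every homotopy argument to a single simply connected region $U_x$, where arcs sharing endpoints are automatically homotopic rel endpoints.
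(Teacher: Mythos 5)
Your construction is, in substance, the paper's Procedure $(*)$: detour around the connected components of the union of forbidden discs along their boundary circles, charge the added length to Lemma \ref{l:length-boundary} ($\leq s\cdot c_0\,a/s = c_0 a$), and use (P2) to see each detour as a homotopy inside a simply connected $U_x$. But two of your deviations from the paper's bookkeeping break the stated conclusion as written. First, the restriction to $S'=\{p\in S: d(p,\gamma)<a/s\}$: your new arcs lie on circles $\partial D(p,a/s)$ with $p\in S'$, and such a circle can enter the forbidden disc of a point $q\in S\setminus S'$. Indeed, take $q$ with $d(q,\gamma)\geq a/s$ but $d(p,q)<2a/s$ (possible whenever $a/(2s)\leq d(p,\gamma)<a/s$); then an arc of $\partial D(p,a/s)$ lies inside $D(q,a/s)$, and your ``shorter arc'' may run through it, so the key assertion $\gamma'\subset B\setminus D(S\cup\partial B,a/s)$ is not established. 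The paper avoids this by decomposing $V(S,a/s)$ over \emph{all} of $S$, so that every boundary point used for a detour is automatically at distance $\geq a/s$ from the whole of $S$; running your argument with $S$ in place of $S'$ costs nothing in the length estimate, so the repair is easy, but as written this step fails.

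Second, because you excise the sub-arc $\eta_j\subset\gamma_x$ spanning the \emph{filled} region $\tilde\Sigma_j$, the hypothesis $d(b_0,S)>a/s$ no longer protects the base point: $b_0$ may lie in a filled hole of $\Sigma_j$ (such points are at distance $\geq a/s$ from $S$, so nothing excludes this), or simply on $\gamma_x$ strictly between two visits of $\gamma_x$ to $\tilde\Sigma_j$; in either case $b_0\in\eta_j$ and your replacement deletes the base point, so the output is neither based at $b_0$ nor compared in $\pi_1(B,b_0)$ by a homotopy rel endpoints. The paper's version is immune to this because the excised portion is taken between the \emph{first and last intersection times of the directed loop parametrized from $b_0$} with the (unfilled) component, a parameter interval $[t_f,t_l]\subset(0,1)$ which can never contain the base point. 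Your hole-filling $\tilde\Sigma_j$, the factor $1/2$ from choosing the shorter boundary arc, and the outermost-first treatment of nested components are harmless refinements of the paper's induction (and your remark that $\eta_j\subset\gamma_x\subset U_x$ makes the rel-endpoints homotopy cleaner); but the lemma is only proved once you (i) run the construction over all of $S$ and (ii) define the excised arcs by first/last intersection times along the loop from $b_0$, or otherwise rule out $b_0\in\eta_j$.
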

     \begin{figure}[ht]
  \centering
  \includegraphics[page=1,height=.3\textwidth, width=.65\textwidth]{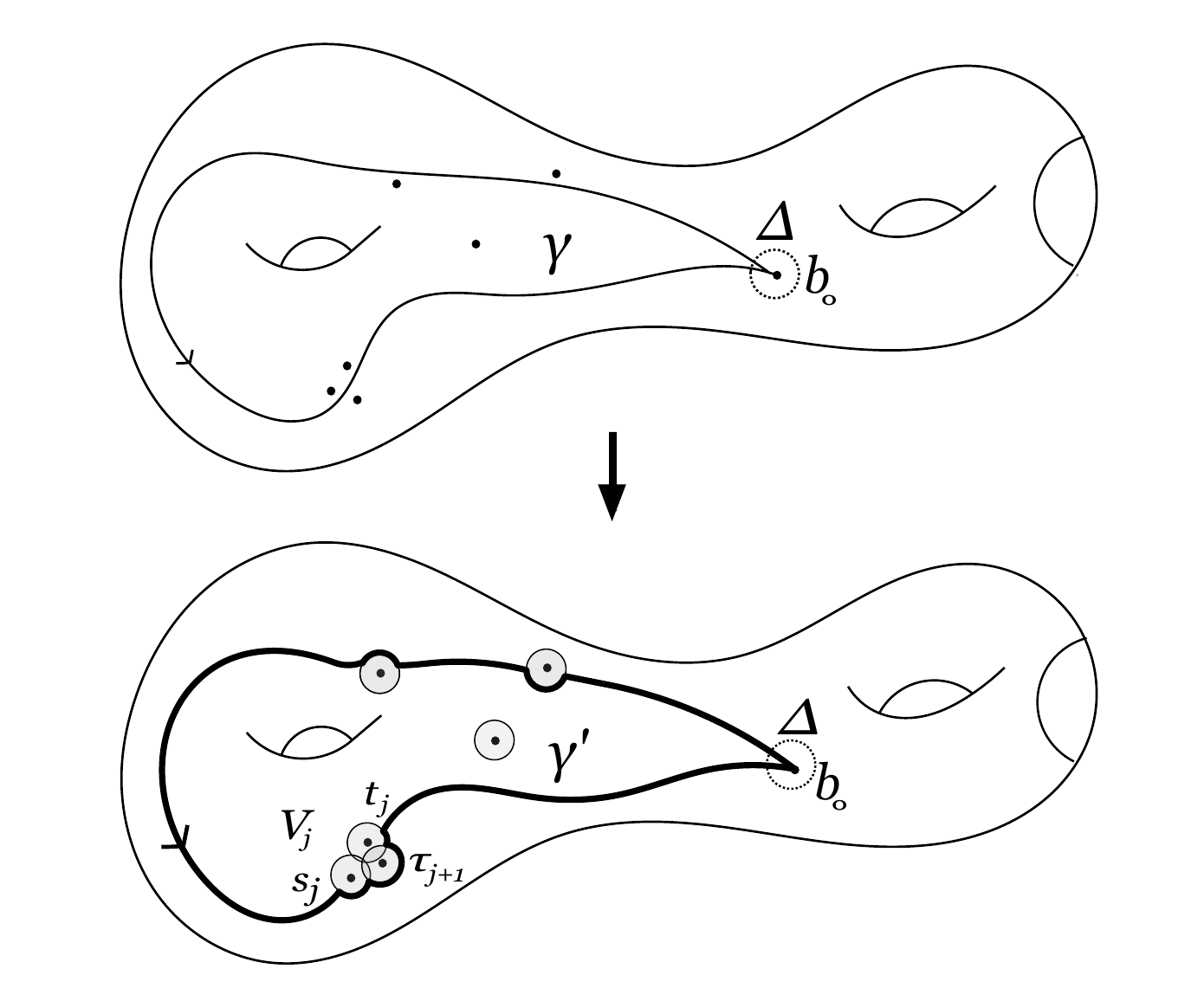}\hspace*{.0\textwidth}%
\caption{Procedure $(*)$ applied to $\gamma$ (with $\Delta= V(b_0,a/s)$)}
    \label{fig:riemannsurface}
\end{figure}

\begin{proof}
 We decompose $V(S, a/s) = \cup_{x \in S} V(x, a/s)$ 
as a disjoint union of connected components $V_1, \dots, V_m$. 
Then each $V_j$ is path connected. 
Let $n_j = \# S \cap V_j$, then:  
\begin{equation} 
\label{e:n-j-star} 
 n_1 + \dots + n_m \leq s= \#S . 
\end{equation} 
\par 
By the triangle inequality, 
$\mathrm{diam}_d (V_j) \leq n_j 2a/s \leq s\times2a/s=2a$. 
 Consider the following $m$-step algorithm. 
Define $\gamma_0 \coloneqq \gamma$. 
Given a curve $\gamma_j \subset B$ where $j \in \{0, \dots, m-1\}$. 
Denote by $s_j, t_j \in \gamma_j$    
the first and the last points, if they exist, 
on the intersection $\gamma_j \cap V_{j+1}$. 
If there are no such points, we set $\gamma_{j+1} = \gamma_j$ and continue the algorithm.   
 Otherwise,  
we replace the directed part  of $\gamma_j$ between $s_j$ and $t_{j}$ by any directed simple curve $\tau_{j+1}$ 
lying on the boundary of $V_j$ which connects $s_j$ and $t_j$ 
(cf. Figure \ref{fig:riemannsurface}). 
This is possible since $V_j$ is path connected. 
Define $\gamma_{j+1}$ the resulting curve and continue until we reach $\gamma_m$. 
 \par
As $ \min_{x \in S} d(b,x) > a/s$ by hypothesis, $b \notin  {V_j}$ for every $j$ 
and thus 
 the base point $b_0 \in \gamma$ is not modified at any step.  
The loops $\gamma_1, \dots, \gamma_m$ based at $b_0$ are piecewise smooth.  
 Moreover,  
 \begin{equation} 
\label{e:n-j-star-V-j} 
\length_{d} (\gamma_{j+1}) 
\leq \length_d (\gamma_{j}) + \length_d (\tau_{j+1}), \quad \quad j =0,\dots,m-1. 
\end{equation} 
\par 
As 
$V_1, \dots, V_m$ are disjoint,  
an induction shows that $s_j, t_j \in \gamma \cap \gamma_j$ and 
each of the curves $\tau_1, \dots, \tau_{j}$  (when defined) is either non modified or does not appear in 
$\gamma_{j+1}$ at the $j$-th step for every 
$0 \leq j \leq m-1$. 
Hence, $\gamma_m$ contains some  pairwise disjoint curves 
$\tau_{i_1}, \dots, \tau_{i_k}$ and $\gamma_m$ 
coincides with $\gamma$ outside 
the directed paths $\sigma_{i_p} \subset \gamma$ 
between $s_{i_p-1}$ and $t_{i_p-1}$ where $p=1, \dots, k$. 
It follows that 
there exists a homotopy in $B$ with base point $b_0$ 
between $\gamma_m$ and~$\gamma$. 
 \par
 Since $\mathrm{diam} V(S, a/s) \leq s \times 2a/s=2a$ and 
$4a < d(\gamma, \partial B)$ by (P1), 
one has 
$d(\gamma_m, \partial B) > 2a$. 
Thus $d(\gamma_m, S \cup \partial B) \geq a/s$ as $s \geq 1$. 
Hence, $\gamma' = \gamma_m$ is a piecewise smooth loop based at $b_0$ 
homotopic to $\gamma$ and $\gamma' \subset B \setminus D(S \cup \partial B, a/s)$. 
Let $c_0=c_0(B,d)>0$ be as in Lemma \ref{l:length-boundary} then:   
\begin{equation}
\label{e:tau-j-star}
\length_d (\tau_j) \leq \sum_{x \in S\cap V_j} \length_d \partial V(x, a/s) \leq n_j \times c_0 a /s. 
\end{equation}
  
It follows from \eqref{e:n-j-star-V-j}, \eqref{e:n-j-star-V-j}, and \eqref{e:n-j-star} that 
 \begin{align*}
\length_{d} (\gamma') 
 \leq \length_d (\gamma) + \sum_j \length_d (\tau_j) 
 \leq \length_d (\gamma) + \sum_j n_j \times c_0 a /s  \leq \length_d( \gamma) + c_0 a. 
\end{align*}
\par 
The proof of the lemma is complete. 
 \end{proof}

\par 
Let $U \subset B\setminus \partial B$ be a simply connected open neighborhood 
of $b_0$ 
such that $\gamma  \cap U$ 
contains exactly one connected branch of $\gamma$. 
Let $\Delta \subset B \setminus \partial B$ be a closed subset such that:  
\begin{enumerate} [\rm (a)]
\item 
$b_0 \subset \Delta \subset U$; 
\item
for every $x \in \Delta$, there exists a simple loop $\gamma_x \subset B$ 
based at $x$ such that $\gamma_x$ and $\gamma$ coincide as paths 
over $B \setminus \Delta$. 
\end{enumerate}
 
\begin{lemma}
\label{l:exist-a-gamma-b-d-family}
There exists $A>0$ such that for every $x \in \Delta$, 
the constant $A$ satisfies $\mathrm{(P1)}$ and $\mathrm{(P2)}$ for 
the loop $\gamma_x$ in the Riemannian surface $(B,d)$. 
\end{lemma}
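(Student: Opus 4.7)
The plan is to produce a single constant $A>0$ that uniformly satisfies (P1) and (P2) for every loop $\gamma_x$ as $x$ ranges over $\Delta$. The key structural fact is that every $\gamma_x$ coincides with $\gamma$ outside $\Delta$, so $\gamma_x \subset \gamma \cup \Delta$---a fixed compact subset of $B \setminus \partial B$---and all the variation occurs inside the simply connected open set $U \supset \Delta$ in which $\gamma \cap U$ is one connected branch.

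For (P1), since $\gamma_x \subset \gamma \cup \Delta$ with $\gamma$ and $\Delta$ both compact in $B \setminus \partial B$, we obtain $d(\gamma_x, \partial B) \geq \min(d(\gamma, \partial B), d(\Delta, \partial B))>0$ uniformly in $x$. The injectivity radius in $B \setminus \partial B$ attains a positive minimum on the compact set $\gamma \cup \Delta$, giving a uniform lower bound on $\mathrm{rad}(\gamma_x, d)$. Hence $L(B,d,\gamma_x) \geq L_0$ for some $L_0>0$ independent of $x$, and choosing $A<L_0/4$ forces (P1) for every $x \in \Delta$ at once.

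For (P2), let $A_0>0$ be the constant supplied by Lemma \ref{l:exist-a-gamma-b-d} applied to $\gamma$, and set $\eta := d(\Delta, B \setminus U) > 0$, which is positive by compactness of $\Delta$ and the inclusion $\Delta \subset U$. I will shrink $A$ so that $4A < \min(\eta, L_0, A_0)$ and then split any test point $y \in \gamma_x$ according to its distance to $\Delta$. If $d(y, \Delta) \leq 2A$, then $V(y, 2A) \subset V(\Delta, 4A) \subset U$, and I take $U_y := U$ as the required simply connected neighborhood. The verification that $\gamma_x \cap U$ is one connected branch uses the hypothesis directly: since $\gamma_x = \gamma$ outside $\Delta \subset U$ and $\gamma \cap U$ is one branch, replacing the subarc of $\gamma$ inside $\Delta$ by the simple subarc of $\gamma_x$ inside $\Delta$ (with matching endpoints on $\partial \Delta$) yields a single connected arc. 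If instead $d(y, \Delta) > 2A$, then $y \in \gamma \setminus \Delta$ and $V(y, 2A) \cap \Delta = \varnothing$, so $\gamma_x \cap V(y, 2A) = \gamma \cap V(y, 2A)$; a finite-cover argument on the compact set $\gamma \setminus D(\Delta, A)$, applying (P2) for $\gamma$ with the constant $A_0$ at each covering point, produces a uniform radius $A_1>0$ below which $V(y, 2A)$ is itself simply connected, disjoint from $\Delta$, and meets $\gamma$ in one branch; taking $A \leq A_1$ allows one to set $U_y = V(y,2A)$ (or a slight enlargement inside $B \setminus \Delta$).

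The main obstacle lies in the far-from-$\Delta$ case: one must ensure simultaneously that $U_y$ is simply connected, contains $V(y, 2A)$, lies in $B \setminus \Delta$, and meets $\gamma$ in a single branch. This is overcome by combining the local result of Lemma \ref{l:exist-a-gamma-b-d} with the compactness argument on $\gamma \setminus D(\Delta, A)$ described above; no delicate analysis of the variation of $\gamma_x$ with $x$ is required, since far from $\Delta$ the loop $\gamma_x$ equals $\gamma$ exactly, while near $\Delta$ the global neighborhood $U$ always works.
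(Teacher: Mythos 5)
Your treatment of $\mathrm{(P1)}$ and of the points close to $\Delta$ is correct and is essentially the paper's: compactness of $\gamma\cup\Delta$ gives the uniform lower bound $L_0$, and for any $y$ with $V(y,2A)\subset U$ the set $U$ itself serves as $U_y$, since $\gamma_x\cap U$ is a single branch (your "replace the subarc inside $\Delta$" phrasing is loose, because neither $\gamma$ nor $\gamma_x$ need meet $\Delta$ in a single subarc, but the conclusion does follow from condition (b): outside $U$ the two loops coincide with the single closed arc $\gamma\cap(B\setminus U)$, so $\gamma_x\cap U$ is the complementary open arc). The genuine gap is in the far-from-$\Delta$ case. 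First, there is a circularity: the compact set $\gamma\setminus D(\Delta,A)$ on which you run the finite cover, and hence the constant $A_1$ you extract from it, depend on $A$, so "taking $A\leq A_1$" is not a legitimate choice as written. Second, and more seriously, the uniform claim you want there does not follow from "$\mathrm{(P2)}$ for $\gamma$ with the constant $A_0$": that hypothesis produces, at each covering point $z$, some simply connected $U_z\supset V(z,2A_0)$ meeting $\gamma$ in one branch, but $U_z$ may be large, may meet $\Delta$, and gives no information about how $\gamma$ intersects a small metric ball inside it. Indeed the ball statement itself is false for piecewise smooth loops: near a corner of $\gamma$ with small angle $\theta$, a ball of radius $r$ centered on one leg at arc-distance $t$ from the corner with $t\sin\theta<r<t$ clips the other leg without containing the corner, so $\gamma\cap V(y,r)$ has two components for \emph{every} $r>0$; a "slight enlargement" does not repair this, since reaching the corner requires enlarging the radius by a factor on the order of $1/\sin\theta$.

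Both defects are repairable, and the repair is essentially the paper's argument. Note first that the only points which cannot use $U_y=U$ are those with $V(y,2A)\not\subset U$, and such points satisfy $d(y,\Delta)\geq \eta-2A\geq \eta/2$ once $4A<\eta$; so the relevant compact set $\{z\in\gamma:\ d(z,\Delta)\geq\eta/2\}$ is independent of $A$, removing the circularity. On that set one should not work with metric balls but with the adapted neighborhoods furnished by simplicity and piecewise smoothness of $\gamma$ (Remark \ref{remark-simple-piece-wise-smooth}): each such $z$ admits arbitrarily small simply connected neighborhoods, which may be taken inside $B\setminus\Delta$ since $d(z,\Delta)>0$, meeting $\gamma$ (hence $\gamma_x$, as the loops coincide off $\Delta$) in exactly one branch; uniformity in $z$ then requires its own compactness argument. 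The paper obtains exactly this by contradiction, as in Lemma \ref{l:exist-a-gamma-b-d}: a putative sequence of bad points $z_n\in\gamma_{x_n}$ with radii $2L/n\to 0$ has a limit $z$, and one concludes using $U$ if $z\in U$ and using the local one-branch neighborhood of $z$ avoiding $\Delta$ if $z\notin U$. Your dichotomy is the right one, but the far case needs this local-structure input and an $A$-independent compactness argument in place of the appeal to $\mathrm{(P2)}$ for $\gamma$ and to balls.
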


\begin{proof}
As $\Gamma = \gamma \cup \Delta \subset B \setminus \partial B$ is a compact subset,  $L \coloneqq \min ( d(\Gamma, \partial B), \mathrm{rad}(\Gamma,d))>0$ where $\mathrm{rad}(\Gamma, d)$ is the infimum  
of the injectivity radii in $(B \setminus \partial B,d)$ at all points $x \in \Gamma$. 
For every $x \in \Delta$, we have $\gamma_x \subset \Gamma$ by Condition (b) above and 
thus $L(B,d, \gamma_x) \geq L$. 
Suppose on the contrary that for every $n \geq 5$, there exist 
$x_n \in \Delta$ and $z_n \in \gamma_x$ 
such that for all simply connected open neighborhood $U_n$ of $x_n$ in $B$ with  
$V(z_n, 2L/n) \subset U_n$, 
the restriction $\gamma_x \cap U_n$ has more than two connected components. 
By the compactness of $\Gamma$, we can suppose, up to passing to a subsequence, 
that $z_n \to z$ as $n \to \infty$ for some $z \in \Gamma$. 
We distinguish two cases. 
First, assume that $z \in U$. 
Then for $n \gg 1$, we have 
$z_n \in U$ and $V(z_n, 2L/n) \subset U$. 
This is a contradiction since $U \subset B \setminus \partial B$ 
is simply connected by hypothesis and 
$\gamma_x \cap U$ has only one connected branch of $\gamma_x$ by Condition (b). 
If $z \in B \setminus U$ then 
we also find a contradiction 
since $\gamma_x$ and $\gamma$ coincide as paths 
over $B \setminus \Delta$ and $\Delta$ is closed and contained in $U$. 
\end{proof}

\section[Bound of length of loops with 
varying base points]{Riemannian lengths of special loops with 
varying base points}
\label{Bound of length of loops with varying base points}

We describe a global version of Procedure $(*)$ 
given in Lemma \ref{l:linear-bound-hyperbolic-length}.
Let $(B,d)$ be a compact Riemannianian surface. 
Let $V$ be a finite union of disjoint closed discs in $B$. 
Let $U= V \setminus \partial V$. 
Let  $b_0 \in B_0 \coloneqq B \setminus U$ 
and fix a base of simple generators $\alpha_1, \dots, \alpha_k$ 
of $\pi_1(B_0, b_0)$ (Section ~\ref{l:primitive-simple-basis-fundamental-group}). 
 
\begin{lemma}
\label{l:base-change-loops} 
Let $\varepsilon >0$. Then there exist constants $a, H >0$ with the following property. 
For every $x \in B_0$ such that $d(x, \partial B_0) \geq \varepsilon$, 
there exist simple piecewise smooth 
 loops $\gamma_1, \dots, \gamma_k \subset B_0$ based at $x$ representing $\alpha_1, \dots, \alpha_k$ up to a single conjugation (Definition~ \ref{d:conjugation-path-1}) such that $a$ verifies $\mathrm{(P1)}$ and $\mathrm{(P2)}$ for the data $(\gamma_i, B_0, d)$ and 
$ \length_d (\gamma_i) \leq H$  \text{for every } $i \in \{1, \dots, k\}$.  
\end{lemma}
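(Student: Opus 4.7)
The plan is to combine a pointwise construction with a finite-covering argument on the compact set $K_\varepsilon \coloneqq \{x \in B_0 \colon d(x, \partial B_0) \geq \varepsilon\}$, using Lemma \ref{l:exist-a-gamma-b-d-family} to promote an admissible constant $a$ from a single basepoint to an entire small disc around it.

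First, I would fix any $x_0 \in K_\varepsilon$ and produce simple piecewise smooth loops $\delta_{1,x_0}, \dots, \delta_{k,x_0} \subset B_0 \setminus \partial B_0$ based at $x_0$ which represent $\alpha_1, \dots, \alpha_k$ respectively up to a single conjugation via $c_{b_0 x_0}$. Existence of such simple representatives at the specific base point $x_0$ follows because each $\alpha_i$ is a simple class (Lemma \ref{l:primitive-simple-basis-fundamental-group} and Definition \ref{d:simple-base-homotopy}), simplicity is an invariant of the free homotopy class, and any simple closed curve realizing this class can be moved by a small ambient isotopy through $x_0$. Applying Lemma \ref{l:exist-a-gamma-b-d} to each $\delta_{i,x_0}$ and taking the minimum over $i$ yields a constant $a_{x_0} > 0$ simultaneously verifying (P1) and (P2) for every $(\delta_{i,x_0}, B_0, d)$. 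Set $H_{x_0} \coloneqq \max_i \length_d(\delta_{i,x_0})$.

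Next, to obtain uniformity in a neighborhood of $x_0$, I would choose a closed disc $\Delta_{x_0} \subset B_0 \setminus \partial B_0$ with $x_0$ in its interior, contained in a simply connected open neighborhood $U_{x_0}$ of $x_0$ in which, for every $i$, the intersection $\delta_{i,x_0} \cap U_{x_0}$ is a single connected branch of $\delta_{i,x_0}$ (possible by Remark \ref{remark-simple-piece-wise-smooth} applied to the finite family of loops). For each $x \in \Delta_{x_0}$, define $\gamma_{i,x}$ by replacing the unique arc $\delta_{i,x_0} \cap \Delta_{x_0}$ with a simple piecewise smooth arc having the same endpoints on $\partial \Delta_{x_0}$ and passing through $x$; then $\gamma_{i,x}$ is a simple piecewise smooth loop at $x$ which coincides with $\delta_{i,x_0}$ on $B_0 \setminus \Delta_{x_0}$. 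Lemma \ref{l:exist-a-gamma-b-d-family}, applied once for each $i$ with the minimum $A_{x_0}$ taken afterwards, supplies a uniform constant verifying (P1) and (P2) for every $(\gamma_{i,x}, B_0, d)$ with $x \in \Delta_{x_0}$. The length bound $\length_d(\gamma_{i,x}) \leq H_{x_0} + c_0(B,d)\cdot\mathrm{diam}_d(\Delta_{x_0})$ follows from Lemma \ref{l:length-boundary}. Since the modification is confined to the simply connected disc $\Delta_{x_0}$ along a branch on which $\delta_{i,x_0}$ is a single arc, $\gamma_{i,x}$ is freely homotopic to $\delta_{i,x_0}$; with a locally coherent choice of the fixed family $\{c_{b_0 b}\}$ inside each $\Delta_{x_j}$ of the finite cover below, this identifies $\gamma_{i,x}$ as representing $\alpha_i$ up to single conjugation via $c_{b_0 x}$.

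Finally, compactness of $K_\varepsilon$ yields a finite subcover $K_\varepsilon \subset \Delta_{x_1}^\circ \cup \dots \cup \Delta_{x_N}^\circ$, and the desired constants are $a \coloneqq \min_{1 \leq j \leq N} A_{x_j} > 0$ and $H \coloneqq \max_{1 \leq j \leq N}\bigl(H_{x_j} + c_0(B,d)\cdot\mathrm{diam}_d(\Delta_{x_j})\bigr)$, since each $x \in K_\varepsilon$ lies in some $\Delta_{x_j}^\circ$ and the loops $\gamma_{i,x}$ constructed above fulfil all the conclusions. I expect the main obstacle to be the bookkeeping of the fixed conjugation paths $\{c_{b_0 b}\}$: the geometric modification inside $\Delta_{x_j}$ preserves free homotopy classes automatically, but matching the specific identity $[c_{b_0 x}^{-1} \circ \gamma_{i,x} \circ c_{b_0 x}] = \alpha_i$ in $\pi_1(B_0, b_0)$ required by Definition \ref{d:conjugation-path-1} calls for a careful, compatible initial choice of the family $\{c_{b_0 b}\}$ on each disc of the finite cover so that the conjugation-ambiguity between $c_{b_0 x}$ and $c_{b_0 x_j}$ concatenated with a short arc in $\Delta_{x_j}$ can be absorbed into the construction.
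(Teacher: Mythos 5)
Your proposal follows essentially the same route as the paper's proof: construct simple representative loops at a chosen base point, invoke Lemma~\ref{l:exist-a-gamma-b-d} to get a local constant, enclose the base point in a small region $\Delta$ where each loop has a single branch, deform the loop inside $\Delta$ to pass through any nearby point $x$, invoke Lemma~\ref{l:exist-a-gamma-b-d-family} to get constants uniform over $\Delta$, and finish with compactness and a finite subcover. The only difference is cosmetic: the paper spells out an explicit replacement of the arc by two boundary arcs of $\partial\Delta$ joined to geodesic segments through $x$ (which guarantees both simplicity and the length bound $H_b + l_b + 2a_b$ without appealing to Lemma~\ref{l:length-boundary}), whereas you leave the interior arc construction slightly less explicit and bound its length via Lemma~\ref{l:length-boundary}; both versions are correct, and both carry the same implicit flexibility in how the fixed conjugation paths $\{c_{b_0b}\}$ are matched to the modified loops.
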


\begin{proof}
For each $b \in B_0$ with $d(b, \partial B_0) \geq \varepsilon$, we can choose 
 simple piecewise smooth loops 
$\gamma_{b, 1}, \cdots, \gamma_{b,k} \subset B_0 \setminus \partial B_0$ 
based at $b$  representing the classes $\alpha_1, \dots, \alpha_k$ respectively up to 
a single conjugation. 
Let  $ 
H_b  \coloneqq \max_{1 \leq i \leq k} ( \length_d \gamma_{b, i} ) >0$. 
\par 
By Lemma \ref{l:exist-a-gamma-b-d}, 
there exists $a_b >0$ satisfying  (P1) and (P2) for ($\gamma_{b, i}$, $B_0$, $d$) for all $i$. 
Thus, for each $i\in \{1, \dots, k\}$, 
$b$ admits a simply connected neighborhood $U_{b,i} \subset B_0 \setminus \partial B_0$ 
such that $V(b, 2a_b) \subset U_{b,i}$ and  $\gamma  \cap U_{b,i}$ 
contains only one connected branch of $\gamma_{b,i}$ (cf. Remark \ref{remark-simple-piece-wise-smooth}). 
\par
Consider a small enough closed region $\Delta_b \subset V(b, a_b) \subset B_0 \setminus \partial B_0 $ containing $b$ 
such that $\partial \Delta_b$ is a non self-intersecting smooth loop homeomorphic to a circle in $V(b, a_b)$  
and $\gamma_{b, i} \cap \Delta_b$ contains only one connected branch of $\gamma$ for every $i=1, \dots, k$. 
Let $l_b\coloneqq \length_d(\partial \Delta_b)>0 $. 
 \par
Since $B'= \{b \in B_0 \colon d(x, \partial B_0) \geq \varepsilon \}$ is compact, 
there exists a finite subset $I \subset B'$ such that 
$B' \subset  \cup_{b \in I} \Delta_b$. 
 \begin{figure}[ht]
   \centering
  \includegraphics[page=1,height=.25\textwidth, width=.65\textwidth]{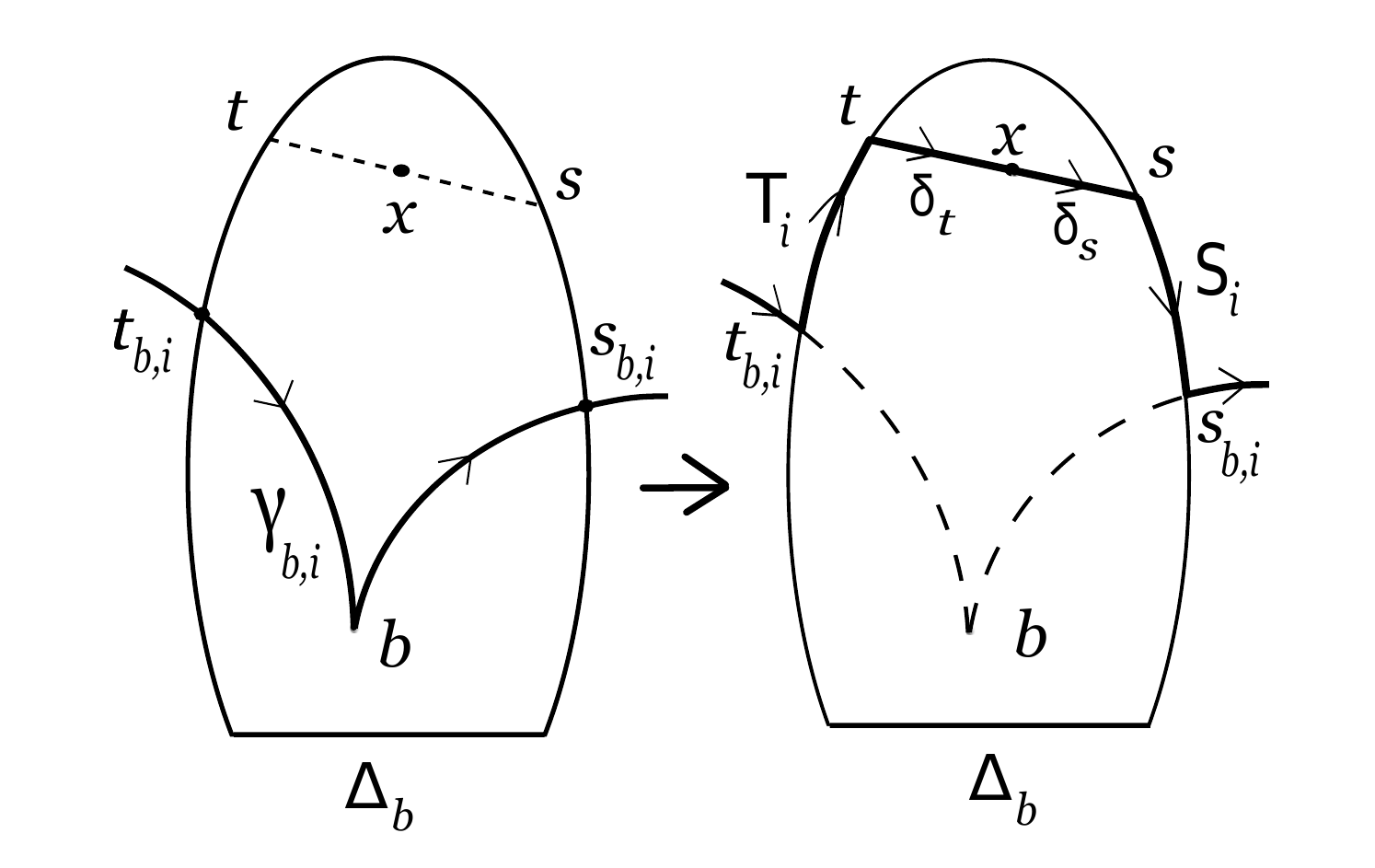}\hspace*{.0\textwidth}%
\caption{Local modification of $\gamma_{b,i} \cap {\Delta_b}$}
  \label{fig:lemma-1}
\end{figure}
Consider the following construction for every $x \in B'$ . 
We can choose $b \in I$ such that $x \in \Delta_b$. 
 For each $i \in \{1, \dots, k\}$, 
let $s_{b,i},  t_{b,i} \in \gamma_{b,i} \cap \Delta_b$ be respectively the first and the last 
intersections of $\gamma_{b,i}$ with the boundary $\partial \Delta_b$. 
 Note that $s_{b,i} \neq  t_{b,i}$. 
Let $\sigma \subset  \Delta_b$ be any maximal geodesic segment passing through $x$. 
Let $s, t \in \sigma \cap \partial \Delta_b$ be the two extremal points of $\sigma$ such 
that $s$ and $s_{b,i}$ do not lie on distinct arcs delimited by $t_{b_i}$ 
and $t$ on $\partial{\Delta_b}$ (cf. Figure \ref{fig:lemma-1}). 
The two directed geodesic segments $\delta_s$ from $x$ to $s$ and 
$\delta_t$ from $t$ to $x$ do not intersect except at $x$. 
Replace $\gamma_{b,i} \cap \Delta_b$ by 
the union of the directed arc $T_i \subset \partial \Delta_b$ from $t_{b,i}$ 
to $t$ not containing $s_{b,i}$, with the paths $\delta_t$, 
$\delta_s$ and the directed arc $S_i \subset  \partial \Delta_b$ from $s$  
to $s_{b,i}$ not containing $t_{b,i}$. 
 Denote the resulting loop 
with base point $x$ by $\gamma_{x,i}$. 
\par
By construction, the loop $\gamma_{x,i}$ 
is simple and piecewise smooth. 
Moreover, the restrictions of $\gamma_{b,i}$ and $\gamma_{x,i}$ to $B_0 \setminus \Delta_b$ coincide. 
As $\Delta_b \subset U_{b,i}$ and $U_{b,i}$ is simply connected and contains 
only one branch of $\gamma_{b,i}$, the loops $\gamma_{b,i}$ and $\gamma_{x,i}$ 
are homotopic up to a conjugation induced by the change of base points.  
By setting 
$H \coloneqq  \max_{b \in I} ( H_b +l_b+2a_b )>0$, 
we find that:   
\begin{align*}
\length_d (\gamma_{x,i}) 
& = \length_d(\gamma_{x,i}\vert_{B \setminus \Delta_b})  \\
& \quad + \length_d (T_i) 
+ \length_d(S_i)  
+ \length_d(\delta_t) + \length_d( \delta_s) 
\\ 
& \leq \length_d(\gamma_{b,i}) + \length_d(\partial \Delta_b) 
+ \length_d(\sigma) 
\\
& \leq  H_b +  l_b + 2a_b \leq H.  \quad \quad \quad  (\text{As } \sigma \subset \Delta_b \subset V(b, a_b))
\end{align*}
\par 
By construction, $\Delta_b \subset V(b, a_b) \subset U_{b,i}$ for 
every  $b \in I$ and $i \in \{1, \dots, k\}$. 
Therefore, 
Lemma~\ref{l:exist-a-gamma-b-d-family} applied to $\gamma_{b,i}$, $b$, 
$U_b$, and $\Delta_b$  
implies that   
there exists  $A_{b,i}>0$ 
such that for every $x \in \Delta_b$, 
the constant $A_{b,i}$ satisfies Properties 
$\mathrm{(P1)}$ and $\mathrm{(P2)}$ for the loop $\gamma_{x,i}$ in the Riemannian 
surface $(B_0, d)$. 
As $I$ is finite, we can define $
a \coloneqq \min_{b \in I} \min_{1 \leq i \leq k} A_{b,i} >0$.  
\par 
It is clear that $H,a>0$ are independent of $x \in B'$ and verify the 
desired properties.  
\end{proof}

\section{Proof of Theorem \ref{t:linear-bound-s-base-curve-1}}  
\label{proof-of-hyper-linear-bound-chapter}
Let $U$ be a finite union of disjoint closed discs in a compact Riemann surface $B$. 
Let $b_1 \in B_1 \coloneqq B \setminus U$. 
Fix a simple base  $\alpha_1 , \dots, \alpha_k$ of $\pi_1(B_1, b_1)$ (see Section~ \ref{l:primitive-simple-basis-fundamental-group}). 
Let $d$ be a Riemannian metric on $B$. 
We prove the following slightly stronger version of  
Theorem  \ref{t:linear-bound-s-base-curve-1}. 

\begin{theorem}
\label{t:collars-for-linear-hyperbolic-bound}
Let the hypotheses be as above. 
Then there exist $A>0$ and $L >0$ satisfying the following properties. 
For any finite subset $S \subset B_1$,  
there exist $b \in B_1\setminus S$ and piecewise smooth loops 
$\gamma_i \subset B_1 \setminus S$ based at $b$ representing   
$\alpha_i$ up to a single  conjugation and such that 
 $V(\gamma_i, A( \# S)^{-1}) \subset B_1\setminus S$ and $
\length_{d_{B_1 \setminus S}} (\gamma_i) \leq L( \#S + 1)$ for  $i=1, \dots, k$.
\end{theorem}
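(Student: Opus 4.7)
The plan is to combine Lemma~\ref{l:base-change-loops}, which provides simple piecewise smooth representatives of each $\alpha_i$ of uniformly bounded $d$-length at a suitable base point, with Procedure $(*)$ of Lemma~\ref{l:linear-bound-hyperbolic-length}, which pushes such loops away from $S$ at the cost of an additive constant in $d$-length, and finally to convert the resulting Riemannian bound into a hyperbolic one by means of Corollary~\ref{c:bound-length-main-hyper-loop-corollary}.

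First, fix $\varepsilon>0$ small enough that $B' \coloneqq \{x \in B_0 : d(x,\partial B_0) \geq \varepsilon\}$ is a compact subset of positive $d$-volume. Applying Lemma~\ref{l:base-change-loops} to this $\varepsilon$ produces constants $a_0, H > 0$ such that for every $x \in B'$ there exist simple piecewise smooth loops $\gamma_{x,1},\dots,\gamma_{x,k} \subset B_0$ based at $x$, representing $\alpha_1,\dots,\alpha_k$ up to a single conjugation, with $\length_d(\gamma_{x,i}) \leq H$, and such that $a_0$ verifies properties $\mathrm{(P1)}$ and $\mathrm{(P2)}$ for each triple $(\gamma_{x,i}, B_0, d)$. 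After shrinking $a_0$ further, we may also assume that $a_0 < r(B,d)$ and $c_0(B_0,d)\, a_0^2 < \vol_d(B')$, where $r(B,d)$ and $c_0(B_0,d)$ are the constants of Lemma~\ref{l:fundamtental-estimate-hyperbolic-surface} and Lemma~\ref{l:length-boundary}.

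Second, the base point is chosen by a volume argument. Let $s = \#S \geq 1$ (the case $s=0$ is handled directly by any $\gamma_{b,i}$ of Lemma~\ref{l:base-change-loops} together with a fixed-radius application of Corollary~\ref{c:bound-length-main-hyper-loop-corollary}). By Lemma~\ref{l:length-boundary},
\begin{equation*}
\vol_d\bigl(V(S, a_0/s)\bigr) \leq s \cdot c_0(B_0,d)\,(a_0/s)^2 = c_0(B_0,d)\, a_0^2 / s \leq c_0(B_0,d)\, a_0^2 < \vol_d(B'),
\end{equation*}
so there exists $b \in B' \setminus V(S, a_0/s)$; in particular $d(b,S) > a_0/s$ and $d(b,\partial B_0) \geq \varepsilon$. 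Let $\gamma_i = \gamma_{b,i}$ be the loops provided by Lemma~\ref{l:base-change-loops} at this $b$.

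Third, apply Procedure $(*)$ to each $\gamma_i$ in the compact Riemannian surface with boundary $(B_0, d)$: since $a_0$ verifies $\mathrm{(P1)}$--$\mathrm{(P2)}$ and $d(b,S) > a_0/s$, Lemma~\ref{l:linear-bound-hyperbolic-length} yields a piecewise smooth loop $\gamma_i' \subset B_0 \setminus D(S \cup \partial B_0, a_0/s)$, based at $b$, of the same homotopy class as $\gamma_i$ in $\pi_1(B_0,b)$, with
\begin{equation*}
\length_d(\gamma_i') \leq \length_d(\gamma_i) + c_0(B_0,d)\, a_0 \leq H + c_0(B_0,d)\, a_0.
\end{equation*}
Setting $A \coloneqq a_0/2$, the inequality $d(\gamma_i', S \cup \partial B_0) \geq a_0/s$ and a connectedness argument (any $d$-geodesic from $\gamma_i'$ into $U$ must cross $\partial U = \partial B_0$) yield $V(\gamma_i', A/s) \subset B_1 \setminus S$, which is the collar condition. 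Finally, as $A/s \leq a_0/2 < r(B,d)$, Corollary~\ref{c:bound-length-main-hyper-loop-corollary} applied with $r = A/s$ gives
\begin{equation*}
\length_{d_{D(\gamma_i', A/s)}}(\gamma_i') \leq \frac{c(B,d)\, s}{A}\bigl(H + c_0(B_0,d)\, a_0\bigr),
\end{equation*}
and the distance-decreasing property of the Kobayashi pseudo metric applied to the inclusion $D(\gamma_i', A/s) \subset B_1 \setminus S$ then yields
\begin{equation*}
\length_{d_{B_1 \setminus S}}(\gamma_i') \leq L\,(s+1), \qquad L \coloneqq \frac{c(B,d)\bigl(H + c_0(B_0,d)\, a_0\bigr)}{A},
\end{equation*}
after absorbing the $s = 0$ case into a possibly larger $L$.

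The main obstacle is the simultaneous choice of a single constant $a_0$ serving two competing purposes: it must be small enough to trigger Procedure $(*)$ uniformly over every admissible base point $b \in B'$ and every loop $\gamma_{b,i}$ produced by Lemma~\ref{l:base-change-loops}, yet such that the volume bound of Lemma~\ref{l:length-boundary} still leaves room in $B'$ to select a base point at $d$-distance strictly greater than $a_0/s$ from $S$, no matter how the $s$ points of $S$ cluster. Lemma~\ref{l:base-change-loops} is precisely the ingredient that uniformises (P1) and (P2) over $B'$, and the quadratic scaling $\vol_d(V(x,r)) = O(r^2)$ against the linear bound $a_0/s$ is what reconciles the two requirements for all $s \geq 1$.
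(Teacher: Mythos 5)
Your proof is correct and follows essentially the same route as the paper's own argument: uniform constants from Lemma~\ref{l:base-change-loops}, a volume/packing argument over the compact set $B'$ to choose a base point $b$ with $d(b,S)>a_0/s$, Procedure $(*)$ of Lemma~\ref{l:linear-bound-hyperbolic-length} to clear an $a_0/s$-neighbourhood of $S\cup\partial B_0$ at additive Riemannian cost, and finally Corollary~\ref{c:bound-length-main-hyper-loop-corollary} together with the distance-decreasing property; the only differences are cosmetic bookkeeping of constants (the paper folds $a$, $r$, $r_0$ and $\sqrt{\vol_d(B_\varepsilon)/(4c_0)}$ into a single $A$ and works with radius $2A/s$, while you shrink $a_0$ and set $A=a_0/2$). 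One minor point: to invoke the volume estimate of Lemma~\ref{l:length-boundary} at radius $a_0/s$ you should also shrink $a_0$ below $r_0(B_0,d)$, exactly as the paper does in its definition of $A$ — a trivial addition to your shrinking step.
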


\begin{proof}
Fix $\varepsilon >0$ small enough so that 
$\vol_d(B_\varepsilon ) >0$ where 
$B_\varepsilon = \{b \in B_0 \colon d(x, \partial B_0) \geq \varepsilon \}$. 
Let $a, H>0$ be the constants given by Lemma \ref{l:base-change-loops}  
applied to $(B_0, d)$,  
to the constant  $\varepsilon>0$ and  to the base 
$\alpha_1, \dots, \alpha_k$ 
of $\pi_1(B_0, b_1)= \pi_1(B_1, b_1)$. 
Let $c_0=c_0(B_0, d)$,  $r_0=r_0(B_0,d)$, $c=c(B,d)>0$,  and $r=r(B,d)>0$ 
be the constants given by Lemma \ref{l:length-boundary} and Lemma \ref{l:fundamtental-estimate-hyperbolic-surface} applied 
to $(B_0,d)$ and $(B,d)$ respectively.  Let us define: 
\begin{equation}
\label{e:new-a-main-theorem-hyper-length} 
A \coloneqq   \frac{1}{2}\min \left(a, r, r_0 , \sqrt{\frac{ \vol_d (B_\varepsilon)}{4c_0} } \right) > 0, 
\quad \quad 
L \coloneqq c \left( \frac{H}{A}+ c_0 \right) >0. 
\end{equation}
 \par 
Let $S \subset B_0$ be a finite subset of cardinality $s \geq 1$.  
We find that: 
 \begin{align*}
 \vol_d V(S,2A/s)  
& \leq  \sum_{x \in S}\vol_d( V(x,2A/s) ) & (\text{as } V(S,2A/s) = \cup_{x \in S} V(x,2A/s))
\\
& \leq  s \times c_0 \times (2A/s)^2 
=  4c_0A^2 /s
& (\text{by Lemma } \ref{l:length-boundary} \text{ and } 2A/s \leq r_0 
\text{ by }\eqref{e:new-a-main-theorem-hyper-length})  
\\ 
& \leq 4 c_0 A^2 \leq  \frac{\vol_d(B_\varepsilon)}{4}< \vol_d(B_\varepsilon).   
& (\text{since } s \geq 1 \text{ and by }\eqref{e:new-a-main-theorem-hyper-length}) 
 \end{align*}
 \par 
 It follows that there exists $b \in B_\varepsilon \subset B_1$ such that 
 $b \cap V(S, 2a/s) = \varnothing$, i.e., $d(b, S) \geq 2a/s$. 
Therefore, 
Lemma  \ref{l:base-change-loops} implies that 
there exist simple piecewise smooth 
loops $\sigma_1, \dots, \sigma_k \subset B_1$ based at $b$ representing $\alpha_1, \dots, \alpha_k$ 
respectively up to a single conjugation with: 
\begin{equation}
\label{e:bound-riemann-main-theorem-hyper-length-1}
\length_d (\sigma_i) \leq H, \quad \text{for every } i \in \{1, \dots, k\}. 
\end{equation}
\par 
Moreover, 
as $0<A < a$ by \eqref{e:new-a-main-theorem-hyper-length}, we infer from  
Lemma \ref{l:base-change-loops} that 
$A$ verifies the properties  $\mathrm{(P1)}$ and $\mathrm{(P2)}$ for the data $(\sigma_i, B_0, d)$ 
for every $i \in \{1, \dots, k\}$. 
\par
Since $d(b, S)= \min_{x\in S} d(b,x) \geq 2a/s>A/s$ by \eqref{e:new-a-main-theorem-hyper-length}, 
we can   apply Lemma \ref{l:linear-bound-hyperbolic-length} 
to the loops $\sigma_1, \dots, \sigma_k$ 
to obtain 
piecewise smooth loops 
$\gamma_1, \dots, \gamma_k \subset B_0 \setminus V(S \cup \partial B_0, A/s)$ 
based at $b$ 
which are of the same homotopy classes  as $\sigma_1, \dots, \sigma_k$ in $\pi_1(B_0, b_1)$ respectively 
and satisfy: 
\begin{equation}
\label{e:bound-riemann-main-theorem-hyper-length}
\length_{d} (\gamma_i) \leq \length_d(\sigma_i) + c_0 A, \quad i = 1, \dots, k.   
\end{equation} 
\par 
In particular, $V(\gamma_i, A/s) \subset B_1 \setminus  S$ for every $i$.  Consequently, we find that: 
 \begin{align*} 
\length_{d_{B_1 \setminus S}} (\gamma_i) 
& \leq \length_{d_{V(\gamma_i, A/s)}} (\gamma_i) 
& (\text{by Lemma }\ref{l:distance-decreasing-hyperbolic} \text{ as } D(\gamma_i, A/s) \subset B_1 \setminus  S)
\\
 & \leq \frac{c(B,d)}{A/s}  \length_{d} (\gamma_i)  
 &  (\text{by Corollary }\ref{c:bound-length-main-hyper-loop-corollary} 
 \text{ and  }A/s \leq A < r \text{ by } \eqref{e:new-a-main-theorem-hyper-length}) 
 \\
 & \leq  \frac{cs}{A} (\length_d(\sigma_i) + c_0 A) &  (\text{by }\eqref{e:bound-riemann-main-theorem-hyper-length} ) 
 \\
& \leq \frac{cs}{A}(H +  c_0 A) =Ls  & (\text{by } \eqref{e:bound-riemann-main-theorem-hyper-length-1} \text{ and } \eqref{e:new-a-main-theorem-hyper-length})
\end{align*} 
\par 
The conclusion follows since by construction, 
the loops $\gamma_1, \dots, \gamma_k \subset B_1 \setminus S$  
represent the homotopy classes $\alpha_1, \dots, \alpha_k$ 
respectively up to a single conjugation.  
\end{proof}
\par 
A slight modification of the above proof allows us to show the following
generalization of Theorem~\ref{t:linear-bound-s-base-curve-1} and Theorem~\ref{t:collars-for-linear-hyperbolic-bound} by  allowing certain bounded moving
discs besides $S$: 

\begin{theorem}
 \label{t:collar-moving-discs-general} 
 Let the hypotheses be as in Theorem~\ref{t:collars-for-linear-hyperbolic-bound} and let
$p \geq 1$. Then there exist  $L, R > 0$ with the following property. For every finite subset
$S \subset B_0$ and every union $Z$ of $p$ discs in $B$ each of $d$-radius $R$, there exist $b \in B_0 \setminus (S \cup Z)$
and piecewise smooth loops $\gamma_1, \dots, \gamma_k \subset  B_0 \setminus S$ based at $b$ which represent respectively
$\alpha_1, \dots, \alpha_k$ in $\pi_1(B_0, b_0)$ up to a single conjugation and such that $\length_{d_{B_0 \setminus (S \cup Z)}}(\gamma_i) \leq L(\# S +1)$. 
\end{theorem}

\begin{proof}
We adopt the proof of Theorem \ref{t:collars-for-linear-hyperbolic-bound}. Let
$R \coloneqq A/(4p)$.  The constant $A' 
\coloneqq A/4 < A < a$ still satisfies the
conclusion of Lemma~\ref{l:base-change-loops}. Then we can simply take $R$ and $4L > 0$. 
Indeed, for every $p$ discs $Z$ in $B$ each of $d$-radius $R$, the only
two modifications needed  are the following. 
First, by the same area argument, we can find $b \in B_\varepsilon \subset B_1$ such that $b \cap (V (S, 2a/s) \cup 
Z) = \varnothing$. The second change lies in the use of Lemma~\ref{l:linear-bound-hyperbolic-length} in the last step: we apply the
procedure $(*)$ described in the proof of Lemma~\ref{l:linear-bound-hyperbolic-length} for the decomposition into connected
components of the closed set $V(S, A'/s) \cup Z$  instead of the set $V(S, A'/s)$. 
\end{proof}

\section{Proof of Theorem \ref{t:linear-bound-sqrt-s-log-s-base-curve-1}}
\label{proof-of-hyper-lower-bound-chapter}
Let $\Omega =  \C\Proj^1 \setminus \{0, 1, \infty\}$.  
Denote by $T_1 \C \Proj^1$ the unit tangent space with 
respect to the Fubini-Study metric $d_{FS}$ on $\C\Proj^1$ given by 
$d_{FS}z= |dz|/(1+|z|^2)$ where $z$ is the  affine coordinate  
on $\C \Proj^1$. 
The next result of Ahlfors (\cite[Theorem 1-12]{ahlfors-73}, notably (1-24)) says that 
the hyperbolic metric on $\Omega$ near the cusp $0$ 
behaves as the hyperbolic metric of the punctured unit disc: 

\begin{theorem}
[Ahlfors] 
\label{ahlfors-73-basic-lemma}
There exist $\delta > 0$ and $C >0$ such that 
for every $(z, v) \in T_1 \C\Proj^1$ 
with $z \in \Omega$ and $d_{FS}(z, 0) < \delta$, we have $| \ln \lambda_\Omega (z, v) +  \ln d_{FS}(z, 0) + \ln \ln (d_{FS}(z, 0))^{-1} | < C$. \qed
\end{theorem}


\begin{lemma}
\label{l:packing-sphere-by-discs}
There exists 
$r_0 > 0$ such that for every $s \geq 2$, we can 
cover the Riemann sphere $\C \Proj^1$ by $s$ closed discs of $d_{FS}$-radius $r_0s^{-1/2}$. 
\end{lemma}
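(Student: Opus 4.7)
The plan is to apply a standard volume packing-covering argument on the Riemann sphere.

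First, following the argument in the proof of Lemma~\ref{l:length-boundary} (based on the Bertrand-Diguet-Puiseux formula applied to the smooth metric $d_{FS}$ on the compact surface $\C\Proj^1$), I would establish the uniform two-sided bound: there exist constants $\eta, r_{*} > 0$ depending only on $(\C\Proj^1, d_{FS})$ such that
\begin{equation*}
\eta r^2 \leq \vol_{d_{FS}} D(x,r) \leq c_0 r^2
\end{equation*}
for every $x \in \C\Proj^1$ and every $0 < r \leq r_{*}$. Only the lower bound is not already stated in Lemma~\ref{l:length-boundary}, but it comes from exactly the same asymptotic expansion, combined with compactness of $\C\Proj^1$. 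Set $V \coloneqq \vol_{d_{FS}}(\C\Proj^1) < \infty$.

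Second, for each integer $s \geq 2$, set $\rho \coloneqq r_0 s^{-1/2}$ for a constant $r_0 > 0$ to be chosen. By Zorn's lemma (or a direct greedy construction), pick a maximal collection of points $x_1, \dots, x_N \in \C\Proj^1$ that are pairwise at $d_{FS}$-distance at least $\rho$. Maximality of the family guarantees that every point of $\C\Proj^1$ lies within $d_{FS}$-distance $\rho$ of some $x_i$, so the closed discs $V(x_i, \rho)$ cover $\C\Proj^1$. Meanwhile, the triangle inequality forces the open discs $D(x_i, \rho/2)$ to be pairwise disjoint, so their volumes add.

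Third, in the main regime $\rho/2 \leq r_{*}$ the volume lower bound applies and yields
\begin{equation*}
N \cdot \eta (\rho/2)^2 \;\leq\; \sum_{i=1}^N \vol_{d_{FS}} D(x_i,\rho/2) \;\leq\; V,
\end{equation*}
hence $N \leq 4V s / (\eta r_0^2)$. Choosing $r_0 \coloneqq 2\sqrt{V/\eta}$ makes $N \leq s$, and I complete the cover by $s$ discs by adding at most $s-N$ arbitrary redundant discs of the same radius $\rho$. The remaining regime $\rho/2 > r_{*}$ corresponds to $s$ lying in the finite set $\{2, \dots, S_0\}$ with $S_0 \coloneqq \lceil (r_0/(2r_{*}))^2 \rceil$; for these finitely many values of $s$ I simply enlarge $r_0$ so that $r_0 s^{-1/2}$ exceeds the (finite) $d_{FS}$-diameter of $\C\Proj^1$, after which a single disc already covers the sphere (again padded up to $s$ discs). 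Taking the larger of the two resulting values of $r_0$ gives the required universal constant.

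There is no serious obstacle here: the only non-trivial ingredient is the lower bound $\vol_{d_{FS}} D(x,r) \gtrsim r^2$, which follows from the Bertrand-Diguet-Puiseux expansion already invoked for the upper bound in Lemma~\ref{l:length-boundary}. The rest is bookkeeping to ensure the packing inequality, applied at scale $\rho \sim s^{-1/2}$, delivers exactly $s$ discs; and the resulting rate $r_0 s^{-1/2}$ is optimal in view of the total area $V$ of $\C\Proj^1$.
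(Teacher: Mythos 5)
Your proof is correct, and it takes a genuinely different route from the paper. The paper deduces the lemma from Kershner's theorem on the asymptotically optimal covering of a planar disc by $N(\varepsilon)$ discs of radius $\varepsilon$ (with $\lim_{s\to\infty} s^{-1}N(s^{-1/2})=8\pi\sqrt{3}/9$), and then transfers the resulting cover of $\Delta_2\subset\C$ to a hemisphere of $\C\Proj^1$ through the stereographic projection, using the pointwise comparison $d_{FS}\geq |dz|/5$ on $\Delta_2$ and symmetry to handle both hemispheres. You instead argue intrinsically on the sphere: a maximal $\rho$-separated set gives a cover by closed balls of radius $\rho$ whose half-radius balls are disjoint, and the two-sided area bound $\eta r^2\leq \vol_{d_{FS}}D(x,r)\leq c_0 r^2$ (the lower bound coming from the same Bertrand--Diguet--Puiseux expansion as in Lemma~\ref{l:length-boundary}, uniformly by compactness — or simply from the constant curvature of $d_{FS}$) bounds the number of centers by $O(\rho^{-2})$; the finitely many small values of $s$ are absorbed by enlarging $r_0$, which is legitimate since the covering property is monotone in the radius. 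Your approach is more elementary and self-contained (no appeal to Kershner or to the stereographic transfer) and works verbatim on any compact Riemannian surface, which is all the lemma's application in the proof of Theorem~\ref{t:linear-bound-sqrt-s-log-s-base-curve-1} requires; what the paper's route buys is an essentially sharp covering constant inherited from Kershner's density, which is not needed for the qualitative statement proved here.
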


\begin{proof} 
Let $\Delta_2= \{z \in \C\colon |z| \leq 2\} \subset \C$. 
Denote $d$ the Euclidean metric on $\C$. 
For each $\varepsilon >0$, let $N(\varepsilon)$ denotes the minimum 
number of closed discs of $d$-radius $\varepsilon$ in $\C$ which can cover $\Delta_2$. 
Then Kershner's theorem (cf. \cite{kershner}) tells us that  
 $\lim_{s \to \infty} s^{-1}N(s^{-1/2}) = 8\pi 3^{1/2}/9$. 
\par
Note that $N(s^{-1/2})$ is an increasing function in $s$. 
It follows that there exists a real number $c > 1$ such that $N(s^{-1/2}) < c s$ for all $s \geq 1$. 
Replacing $s$ by $s/c$, we deduce that 
for every $s \geq c$, there exists a covering of $\Delta_2$ by 
at most $ s$ discs of $d$-radius $(c/s)^{1/2}$. 
In particular, since $(c/s)^{1/2} \leq 1$ for $s \geq c$, 
we can find a subset of $k \leq s$ discs $D_1, \dots, D_k$ which cover $\Delta_1$ 
and whose centers $z_1, \dots, z_k$ belong to $\Delta_2$. 
Let  $p_N$ be the stereographic projection from the north pole of $\C \Proj^1$ onto $\C$. 
We obtain a cover of the Southern hemisphere of $\C \Proj^1$ 
by $p_N^{-1}(D_1), \dots, p_N^{-1}(D_k)$.  
As $d_{FS}z= |dz|/(1+|z|^2) \geq |dz|/5$ for every 
$z \in \Delta_2$, every set  $p_N^{-1}D_i$, 
where $i=1, \dots, k$, 
is contained in the disc centered at $p_N^{-1}(z_i)$ 
of $d_{FS}$-radius   $5(c/s)^{1/2}$. 
By symmetry, we obtain   a cover of $\C \Proj^1$ 
by $2s$ discs of $d_{FS}$-radius $\leq 5(c/s)^{1/2}$ for every $s \geq c$.
\end{proof}


\begin{proof}[Proof of Theorem \ref{t:linear-bound-sqrt-s-log-s-base-curve-1}] 
Consider an arbitrary ramified cover  $\pi \colon B \to \C\Proj^1$ 
of $B$ to the Riemann sphere. 
Let $d_{FS}$ be the Fubini-Study metric on $\C\Proj^1$. 
We denote by $\tilde{d}$ the induced metric on $B \setminus R_\pi$ 
where $R_\pi \subset B$ is the branch locus of $\pi$ which is a finite subset. 
\par
Since $\alpha \in \pi_1(B_0) \setminus \{0\}$, 
it is well-known that there exists $c_0>0$ such that every loop $\gamma \subset B_0$ 
representing $\alpha$ has bounded $\tilde{d}$-length from below by $c_0$, i.e., 
$\length_{\tilde{d}} (\gamma) >c_0$ (cf., for example, 
\cite[Theorem 1.6.11]{buser-book-92}).  
In particular, it follows that 
\begin{equation}
\label{e:lower-bound-s-log-s-Euclidean} 
\length_{d_{FS}} (\pi( \gamma)) = \deg(\pi)^{-1}  \length_{\tilde{d}} (\gamma)  > c_0 \deg(\pi)^{-1}.
\end{equation}
 
 By Lemma \ref{l:packing-sphere-by-discs}, 
there exists 
$r_0 > 0$ such that for every $s \geq 2$, we can 
cover the Riemann sphere $\C \Proj^1$ in a certain regular manner by $s$ discs of $d_{FS}$-radius $r_0s^{-1/2}$. 
Let $Z \subset \C\Proj^1$ be the set containing the centers 
of these discs. 
For each point $z \in Z$, let $z' \in \C \Proj^1$ 
be any point on the equator relative to $z$ as a pole. 
Consider the stereographic projection $P_w$ from the 
opposite pole $w \in \C \Proj^1$ of $z$ to the complex plane 
 such that  $P_w(z)=0$ and $P_w(z')=1$. Then 
$P_w$ is a biholomorphic isometry with respect to the induced Fubini-Study metric $d_{FS}$: 
\begin{equation}
\label{e:isometry-fubini-study} 
\tilde{P}_w \colon  \C \Proj^1 \setminus \{w, z, z'\} \to 
\Omega= \C \Proj^1 \setminus \{ 0,1, \infty\}. 
\end{equation}
 
Define $T= \{w,z,z' \colon z \in Z\}$ and $S = \pi^{-1}Z \subset B$. 
Then $\pi (B_0 \setminus S) \subset \C \Proj^1\setminus T$ and: 
\begin{equation}
\label{e:s-roughly-as-s-log}
\# S \sim 3\deg(\pi) s. 
\end{equation}
\par 
For every $x \in \C \Proj^1 \setminus  T$, 
we can find by construction some 
$z \in Z$ such that $d_{FS}(x, z) < r_0s^{-1/2}$. 
By Theorem \ref{ahlfors-73-basic-lemma} applied to $x \in \C \Proj^1 \setminus \{w, z, z'\} \simeq \Omega$, 
we deduce that for all $s$ large enough so that $r_0s^{-1/2}< \delta$ and 
for every unit vector $v \in (T_1 \C\Proj^1)_x$, we have: 
\begin{align*}
\lambda_{\C \Proj^1\setminus T}(x, v) 
&\geq \lambda_{\C \Proj^1\setminus \{w,z,z'\}}(x, v) &  (\text{by Lemma } \ref{l:distance-decreasing-hyperbolic}) 
\numberthis \label{e:ahlfors-73-basic-lemma-application-sphere}
\\
&\gtrsim d_{FS}(x,z)^{-1} (\ln d_{FS}(x,z)^{-1})^{-1} & (\text{by } \eqref{e:isometry-fubini-study} \text{ and Theorem } \ref{ahlfors-73-basic-lemma})
\\
& \gtrsim s^{1/2} (\ln (s))^{-1}.  & (\text{as } d_{FS}(x, z) < r_0s^{-1/2})
\end{align*}
\par 
Now let $\gamma \subset B_0 \setminus S$ be 
any piecewise smooth loop representing $\alpha$ and is 
parametrized by a map   
$f \colon [0,1] \to B_0 \setminus S$.  
We find that: 
\begin{align*}
 \length_{B_0 \setminus S} (\gamma)  
&  = \int_0^1  \lambda_{B\setminus S} \left( f(t), f'(t)) \right) dt &
\\
& \geq \int_0^1  \lambda_{\C\Proj^1 \setminus T} \left(\pi \circ f(t), (\pi \circ f)'(t) \right) dt   
& (\text{by Lemma } \ref{l:distance-decreasing-hyperbolic})
\\
& \gtrsim  \int_0^1  s^{1/2} (\ln (s))^{-1}  |(\pi \circ f)'(t)|_{d_{FS}}   dt 
& (\text{by } \eqref{e:ahlfors-73-basic-lemma-application-sphere})
\\
& = s^{1/2} (\ln  (s))^{-1}  \length_{d_{FS}}(\pi(\gamma)) &
\\
&  \gtrsim \#S^{1/2} (\ln  (\#S+1))^{-1}.  
& (\text{by }\eqref{e:lower-bound-s-log-s-Euclidean} \text{ and } \eqref{e:s-roughly-as-s-log})
\end{align*}
\end{proof}

\begin{remark}
\label{r:optimal-lower-bound-s-log-s} 
The lower asymptotic polynomial growth $s^{1/2}$ in Theorem \ref{t:linear-bound-sqrt-s-log-s-base-curve-1}
is optimal. 
Indeed, assume that $B= \C \Proj^1$. 
Then in the above proof, we can use 
 directly the bi-liptschitz metric equivalence Lemma~\ref{ahlfors-73-basic-lemma} 
in Theorem \ref{ahlfors-73-basic-lemma} 
instead of the inequality \eqref{e:ahlfors-73-basic-lemma-application-sphere}. 
Therefore, for the choice given in the above proof of certain uniform distribution of the 
$s$ points on $\C \Proj^1$, 
the asymptotic polynomial growth $s^{1/2}$ of the function $L(\alpha, s)$ 
is attained.  
\end{remark}

\section{Parshin's homotopy reduction and metric properties of hyperbolic spaces} 
\label{c:Hyperbolic and homotopy method}

\subsection{The homotopy reduction step  of Parshin} 

In Setting (P), 
let $t_A \in \N$ be the cardinality of  $(A(K)/\Tr_{K/\C}(A)(\C))_{tors}$  (cf.~\cite{lang-neron-theorem-paper}). 
Without loss of generality, we suppose in the rest of the paper that $A[m] \subset A(K)$ 
for some integer $m \geq 2$. 
The proof of Theorem \ref{t:parshin-generic-emptiness-higher-dimension} 
 combines the  \emph{homotopy} reduction step due to Parshin 
(cf. Proposition \ref{p:homotopy-rational-abelian}) 
with the estimation given in Theorem \ref{t:linear-bound-s-base-curve-1} 
on the \emph{hyperbolic} lengths of loops in 
 complements of the Riemann surface $B$.   

\par

The bridges connecting the above two blocks
Theorem \ref{t:parshin-generic-emptiness-higher-dimension} 
are the following: 
the first one is the \emph{Fundamental 
Lemma}  of the geometry of groups, which 
we formulated in Proposition \ref{p:same-growth-geometric-group} 
and in Lemma \ref{l:exponential-homotopy-northcott} 
(cf. Appendix \ref{s:geometry-fundamental-groups-main}). 
In particular, Lemma \ref{l:exponential-homotopy-northcott} can be regarded as an analogous counting tool 
of a Counting Lemma of Minkovski 
frequently used in height theory.  
The second one is a theorem of Green (cf. Theorem \ref{t:green-hyperbolic-embedding}) 
which allow to transfer hyperbolic metric information from the Riemann surface $B$ to the family $\mathcal{A}$. 

\par  

The approach of Parshin in his Theorem \ref{t:parshin-90} 
is based on   
Proposition \ref{p:parshin-diagram-abelian} below which is stated without proof in \cite{parshin-90}. 

\begin{proposition} [Parshin]
\label{p:parshin-diagram-abelian}
Let $W \supset T$ be any finite union of disjoint closed discs in 
$B$ such that distinct points of $T$ are contained in distinct discs.  
Let $B_0\coloneqq B \setminus W$. Let $b_0 \in B_0$ and 
denote $\Gamma = H_1(\mathcal{A}_{b_0}, \Z)$, $G= \pi_1(B_0, b_0)$. 
Assume $A[m] \subset A(K)$ for some integer $m\geq 2$. 
Then   
we have a natural commutative diagram of homomorphisms: 
\begin{equation}
\begin{tikzcd}
\label{d-abelian-family-diagram}
A(K)/mA(K) 
 \arrow[r, hook, "\delta"]  
 &  H^1(\widehat{G}, A[m])  \arrow[dr, "\simeq"] &  \\ 
A(K)  
\arrow[u] \arrow[r, "\alpha"] 
& H^1(G, \Gamma) \arrow[r, "\beta"] 
& H^1(G, A[m]). 
\end{tikzcd}
\end{equation}

\end{proposition}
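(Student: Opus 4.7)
The plan is to build each arrow in the diagram by invoking standard homotopy-theoretic and cohomological machinery for families of abelian varieties, and then track an $m$-torsion cover through two different constructions to verify commutativity.

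First, I would construct the map $\alpha \colon A(K) \to H^1(G,\Gamma)$ via the homotopy splitting picture already sketched in the introduction. By Ehresmann's theorem, $f \colon \mathcal{A}_{B_0} \to B_0$ is a locally trivial fibre bundle in the differential category with fibre the compact complex torus $\mathcal{A}_{b_0}$, so choosing $w_0 = 0_{\mathcal{A}_{b_0}}$ one has the short exact sequence
\begin{equation*}
1 \longrightarrow \Gamma = \pi_1(\mathcal{A}_{b_0},w_0) \longrightarrow \pi_1(\mathcal{A}_{B_0}, w_0) \xrightarrow{\, f_* \,} G \longrightarrow 1.
\end{equation*}
The zero section provides a distinguished splitting $s_0$. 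Any other section $\sigma \in \mathcal{A}(B) = A(K)$ restricts to a topological section of $f$ over $B_0$, inducing a homotopy splitting $s_\sigma$ (after translating base points through a fixed path in the fibre, i.e.\ using that $\Gamma$ is abelian so conjugacy classes of splittings form a torsor under $H^1(G,\Gamma)$). I would set $\alpha(\sigma) \coloneqq [s_\sigma - s_0] \in H^1(G,\Gamma)$.

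Next, I would define $\beta$ as the coefficient change induced by the canonical surjection $\Gamma = H_1(\mathcal{A}_{b_0},\Z) \twoheadrightarrow \Gamma/m\Gamma$, followed by the identification $\Gamma/m\Gamma \simeq \mathcal{A}_{b_0}[m] = A[m]$ coming from the exponential presentation of the complex torus (here $A[m] \subset A(K)$ guarantees that $A[m]$ is constant over $B_0$). The isomorphism $H^1(G,A[m]) \simeq H^1(\widehat G, A[m])$ on the right-hand vertical is standard: since $A[m]$ is finite, continuous cohomology of the profinite completion agrees with ordinary group cohomology of $G$; moreover $\widehat G = \pi_1^{\mathrm{et}}(B_0, b_0)$ by Riemann existence, so $H^1(\widehat G, A[m]) = H^1_{\mathrm{et}}(B_0, A[m])$ classifies étale $A[m]$-torsors on $B_0$. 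The map $\delta$ is the Kummer map for the isogeny $[m] \colon A \to A$: given $P \in A(K)$, any $Q$ with $mQ = P$ over a finite cover yields a cocycle, and since $[m] \colon \mathcal{A}_{B_0} \to \mathcal{A}_{B_0}$ is étale, the resulting $A[m]$-torsor $\sigma_P^*([m] \colon \mathcal{A}_{B_0} \to \mathcal{A}_{B_0})$ is unramified over $B_0$, giving $\delta([P]) \in H^1(\widehat G, A[m])$; injectivity follows from $A[m] \subset A(K)$ together with $A(K)[m] = A[m]$.

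The heart of the proof, and the step I expect to be the main obstacle, is verifying commutativity of the diagram. Both compositions produce, starting from $\sigma \in A(K)$, an $A[m]$-torsor on $B_0$, and the task is to identify them canonically. Going through $\delta$ one obtains the pullback $\sigma^*[m]$ of the multiplication-by-$m$ cover, viewed as an étale $A[m]$-torsor. Going through $\alpha$ and then $\beta$, one considers the homotopy splitting $s_\sigma - s_0$ modulo $m\Gamma$: topologically, quotienting the fibre $\mathcal{A}_{b_0}$ by $m\Gamma \subset \Gamma$ amounts to passing to the $m$-fold isogenous torus, which is exactly $[m] \colon \mathcal{A}_{b_0} \to \mathcal{A}_{b_0}$ viewed as an $A[m]$-cover; globally, applying this fibrewise to the homotopy exact sequence corresponds to forming the same $[m]$-cover over $B_0$, and the class of $s_\sigma - s_0$ modulo $m$ measures precisely the monodromy obstruction to lifting $\sigma$ through $[m]$. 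I would make this rigorous by considering the pullback exact sequence
\begin{equation*}
1 \to A[m] \to \pi_1(\mathcal{A}_{B_0}, w_0)/m\Gamma \to G \to 1,
\end{equation*}
identifying it with the extension classifying the $A[m]$-torsor $[m] \colon \mathcal{A}_{B_0} \to \mathcal{A}_{B_0}$ pulled back to $B_0$, and checking that the two resulting cocycle representatives coincide on the nose (or, equivalently, chasing through the commutative ladder relating the topological Kummer sequence $0 \to \Gamma \xrightarrow{m} \Gamma \to A[m] \to 0$ on fibres to the algebraic one on $\mathcal{A}_{B_0}$). This cocycle-level comparison is the only nontrivial verification; the rest of the diagram is functorial.
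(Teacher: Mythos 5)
Your construction is correct, and it reaches the diagram by a route that is noticeably more hands-on than the paper's. You work entirely at the level of fundamental groups and explicit cocycles: $\alpha$ as the difference of homotopy splittings $s_\sigma-s_0$, $\beta$ as reduction $\Gamma\to\Gamma/m\Gamma\simeq A[m]$, $\delta$ as the Kummer connecting map landing in unramified classes because $[m]$ is \'etale over $B_0$, and commutativity via the pushed-out extension $1\to A[m]\to \pi_1(\mathcal{A}_{B_0},w_0)/m\Gamma\to G\to 1$, identifying both sides with the monodromy of the $m$-division cover of $\sigma_P$. The paper instead runs the comparison through sheaf cohomology on $B_0$: it writes the relative exponential sequence $0\to (R^1f_*\Z)^\vee\to\mathcal{L}\to\mathcal{A}_0^{an}\to 0$, applies the snake lemma to the multiplication-by-$m$ ladder to identify $\mathcal{A}_0[m]$ with the cokernel $Q$ of $m$ on $(R^1f_*\Z)^\vee$, and then obtains the whole diagram from functoriality of the long exact cohomology sequences, combined with the algebraic Kummer sequence and the $K(\pi,1)$ identifications $H^1(B_0,(R^1f_*\Z)^\vee)\simeq H^1(G,\Gamma)$, $H^1(B_0,\mathcal{A}_0[m])\simeq H^1(G,A[m])$; in particular $\alpha$ arises there as a connecting homomorphism, and the homotopy-splitting description you start from is only given separately. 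What your route buys is an explicit torsor/monodromy interpretation of every arrow, which is exactly the picture used later in counting homotopy sections; its cost is that the two verifications you correctly flag --- the cocycle-level coincidence of $\sigma_P^*[m]$ with $(i_P-i_O)\bmod m\Gamma$, and the \'etale-versus-topological identification $H^1(\widehat G,A[m])\simeq H^1(G,A[m])\simeq H^1(B_0,\mathcal{A}_0[m])$ compatible with the algebraic Kummer map --- must be checked by hand, whereas in the paper's setup commutativity falls out of functoriality of connecting maps once the exponential ladder is in place (and the trace-zero case is dispatched immediately by the injectivity of $\mathcal{A}_0^{an}(B_0)\to H^1(B_0,(R^1f_*\Z)^\vee)$ on rational sections). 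Neither presentation has a gap the other avoids on the delicate points (injectivity of $\delta$ into the unramified cohomology, treating $\mathcal{A}_{B_0}$ as a family of complex tori), so your argument is an acceptable, if more computational, substitute.
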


Here, $\widehat{G}$ denotes as usual the profinite completion of the group $G$. Let $\omega \in W$ be the
finite subset containing the centers of the discs in $W$ and such that $T \subset \omega$. 
By the theory of \' etale fundamental groups, 
$\widehat{G} =    \Gal (K_\omega / K)$ 
where $K_{\omega}/K$ is the maximal Galois extension 
of $K$ which is unramified outside of $\omega$.  
We first indicate below how Proposition~\ref{p:parshin-diagram-abelian} allows us to reduce the problem to  the 
finiteness of certain morphisms between certain 
fundamental groups. In what follows, we
keep the notation as in Proposition~\ref{p:parshin-diagram-abelian}. 
\par 
Since $\mathcal{A}_{B_0} \to B_0$ is a proper submersion, 
it is a fiber bundle by Ehresmann's fibration theorem (cf. \cite{ehresmann}). 
It follows that we have an exact sequence of fundamental groups 
induced by the fiber bundle $A_{b_0}  \to \mathcal{A}_{B_0} \to B_0$ of $K(\pi, 1)$-spaces where we denote $A_{b_0}\coloneqq \mathcal{A}_{b_0}$: 
\begin{equation}
\label{e:abelian-homotopy-exact-sequence-1}
0 \to \pi_1(A_{b_0}, w_0)=H_1(A_{b_0}, \Z) \to \pi_1(\mathcal{A}_{B_0}, w_0) \xrightarrow{\rho \,= f_*} \pi_1(B_0,b_0) \to 0. 
\end{equation}

To fix the ideas, $w_0$ is chosen here and in the rest of the paper to be the 
neutral element of the group $A_{b_0}$, which also lies on the zero section of $\mathcal{A}_{B_0}$.    
\par
Every rational point $P \in A(K)$ induces (cf. \eqref{e:definition-of-i-p-main-reduction-step})  a homotopy conjugacy class of sections  
$i_P \colon  \pi_1(B_0,b_0)  \to  \pi_1(\mathcal{A}_{B_0}, w_0)$ 
of the exact sequence \eqref{e:abelian-homotopy-exact-sequence-1}. 
The quantitative version 
of the homotopy reduction step of Parshin can be stated as follows: 
 
\begin{proposition}
\label{p:homotopy-rational-abelian}
Let the notation be as in Proposition \ref{p:parshin-diagram-abelian}.  
Modulo the translations by the trace $\Tr_{K /\C} A(\C)$, 
every conjugacy class of a section $i$ of the exact sequence 
\eqref{e:abelian-homotopy-exact-sequence-1} 
is induced by at most $t_A=\#  (A(K)/\Tr_{K/ \C} (A) (\C))_{tors} $ rational points $P \in A(K)$. 
\end{proposition}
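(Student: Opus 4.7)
The plan is to identify the assignment $P\mapsto [i_P]$ with the group homomorphism $\alpha$ of Parshin's diagram~\eqref{d-abelian-family-diagram}, show that translations by $\Tr_{K/\C}(A)(\C)$ lie in its kernel, and then use the Kummer injection $\delta$ together with the Lang--N\'eron finiteness of $\Lambda \coloneqq A(K)/\Tr_{K/\C}(A)(\C)$ to force the induced kernel to land inside $\Lambda_{\mathrm{tors}}$, whose cardinality is $t_A$.

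For $P\in A(K)=\mathcal{A}(B)$, I would construct $i_P$ by restricting $\sigma_P$ to $B_0$ and post-composing with the fibrewise translation $\tau_{-\sigma_P(b_0)}$ inside the group scheme $\mathcal{A}_{B_0}$, producing a section $\widetilde{\sigma}_P\colon B_0\to \mathcal{A}_{B_0}$ with $\widetilde{\sigma}_P(b_0)=w_0$. The induced splitting $(\widetilde{\sigma}_P)_*$ of~\eqref{e:abelian-homotopy-exact-sequence-1} depends, up to an inner automorphism by $\Gamma$, on the chosen path from $\sigma_P(b_0)$ to $w_0$ inside the connected fibre $A_{b_0}$, so the class $[i_P]$ is canonical. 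The standard torsor correspondence between conjugacy classes of splittings of an extension with abelian kernel and the cohomology group $H^1(G,\Gamma)$ then identifies $P\mapsto [i_P]$ with the group homomorphism $\alpha$ from~\eqref{d-abelian-family-diagram}. For $c\in \Tr_{K/\C}(A)(\C)$ the constant section $\sigma_c$ is homotopic to $\sigma_0$ through the family $\sigma_{c(t)}$ where $c(t)$ is any smooth path from $0$ to $c$ in the connected Lie group $\Tr_{K/\C}(A)(\C)$, so $[i_c]=[i_0]$; hence $\alpha$ descends to $\bar\alpha\colon \Lambda\to H^1(G,\Gamma)$, and the fibres of $P\mapsto [i_P]$ modulo the trace are exactly cosets of $\ker\bar\alpha$. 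The proposition is therefore equivalent to the inclusion $\ker\bar\alpha\subset \Lambda_{\mathrm{tors}}$.

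The commutativity of~\eqref{d-abelian-family-diagram} together with the injectivity of the Kummer map $\delta$ yields $\ker\alpha\subset mA(K)$ and hence $\ker\bar\alpha\subset m\Lambda$. The main obstacle, and the heart of the argument, is to upgrade this mere $m$-divisibility in the finitely generated lattice $\Lambda$ to genuine torsion. I would do this by tensoring with $\mathbb{Q}$ to reduce to the injectivity of $\bar\alpha_{\mathbb{Q}}\colon \Lambda\otimes \mathbb{Q}\to H^1(G,\Gamma)\otimes \mathbb{Q}$. For $\bar P\in \ker\bar\alpha$ of infinite order, one iteratively extracts $m$-th roots $R_k$ satisfying $n_0 n_1\cdots n_{k-1}\bar P=m^k R_k$, where $n_i$ is the order of $\bar\alpha(R_i)$ in the finite group $H^1(G,\Gamma)_{\mathrm{tors}}$; this group is finite because $(\Gamma\otimes \mathbb{Q}/\mathbb{Z})^G$ injects into $A(K_\omega)_{\mathrm{tors}}$, itself finite by Lang--N\'eron. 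Each $n_i$ is then bounded by the exponent of this fixed finite group, so the lattice points $R_k$ remain in a bounded region of $\Lambda/\Lambda_{\mathrm{tors}}\cong \mathbb{Z}^r$; the Northcott finiteness for bounded lattice points forces $R_k=R_j$ modulo torsion for some $k>j$, yielding an algebraic identity $n_j n_{j+1}\cdots n_{k-1}=m^{k-j}$ that is incompatible with $\bar P\neq 0$ in $\Lambda\otimes \mathbb{Q}$. This gives $\ker\bar\alpha\subset \Lambda_{\mathrm{tors}}$ and the desired bound $t_A$.
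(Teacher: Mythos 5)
Your first two paragraphs run parallel to the paper's argument: the paper also identifies $P\mapsto i_P$ with the homomorphism $\alpha$ of Proposition \ref{p:parshin-diagram-abelian}, uses the injectivity of $\delta$ to get $\alpha(P-Q)=0\Rightarrow P-Q\in mA(K)$, and finishes with Lang--N\'eron. The genuine gap is in your third paragraph, i.e.\ precisely at the step where mere $m$-divisibility must be upgraded to torsion. In the paper this upgrade rests on one decisive input: $H^1(G,\Gamma)$ is \emph{torsion free}, so from $m\alpha(R)=\alpha(P-Q)=0$ one gets $\alpha(R)=0$, the Kummer step can be iterated, and $P-Q\in m^kA(K)$ for all $k$, whence $P-Q$ is torsion modulo the trace. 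Your iterative scheme is designed to avoid this input, but it does not: writing $n_0\cdots n_{k-1}\bar P=m^{k}\bar R_k$ with $n_i=\mathrm{ord}\,\bar\alpha(R_i)$, the relation $m^{i}\bar\alpha(R_i)=0$ only gives $n_i\mid m^{i}$ and $n_i\le e$ (the exponent of $H^1(G,\Gamma)_{\mathrm{tors}}$). If $e>m$ the points $\bar R_k=\frac{n_0\cdots n_{k-1}}{m^{k}}\bar P$ need not stay in a bounded region of $\Lambda/\Lambda_{\mathrm{tors}}$, so the pigeonhole step already fails. Worse, even when boundedness holds, the identity you extract from a repetition, $n_j\cdots n_{k-1}=m^{k-j}$, is not a contradiction: it is satisfied, consistently, by $n_i=m$ for all $i$, and this is exactly the scenario in which some $\bar\alpha(R_i)$ is a nonzero $m$-torsion class --- the very possibility one has to rule out (if it occurred, $P=mR_0$ would lie in $\ker\bar\alpha$ with no reason to be torsion, and the proposition itself would be in jeopardy). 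So your argument does not prove the injectivity of $\bar\alpha_{\mathbb{Q}}$, and the missing ingredient is precisely the (absence of relevant) torsion in $H^1(G,\Gamma)$ that the paper invokes.

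A secondary inaccuracy: your justification that $H^1(G,\Gamma)_{\mathrm{tors}}$ is finite ``because $(\Gamma\otimes\mathbb{Q}/\mathbb{Z})^G$ injects into $A(K_\omega)_{\mathrm{tors}}$, itself finite by Lang--N\'eron'' is wrong on both counts. The extensions $K(A[n])/K$ are unramified outside the bad-reduction locus $T\subset\omega$, so $A(K_\omega)$ contains \emph{all} torsion of $A(\bar K)$ and its torsion subgroup is infinite; and Lang--N\'eron concerns $A(K)$ modulo the trace for the finitely generated field $K$, not points over the infinite extension $K_\omega$. The finiteness you want is true for a trivial reason --- $G=\pi_1(B_0,b_0)$ is a finitely generated free group and $\Gamma\simeq\mathbb{Z}^{2\dim A}$, so $H^1(G,\Gamma)$ is a finitely generated abelian group --- but, as explained above, finiteness of the torsion is not enough: what the argument needs, and what the paper uses, is its vanishing (no $m$-power torsion in the relevant classes).
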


\begin{proof} (cf.  \cite[Proposition 2.1]{parshin-90})
Let $P, Q \in A(K)$ and assume that  that the conjugacy classes
of $i_P$ and $i_Q$  
of the exact sequence \eqref{e:abelian-homotopy-exact-sequence-1} 
are equal as homomorphisms $  \pi_1(B_0,b_0) \to  \pi_1(\mathcal{A}_{B_0}, w_0) $. 
Up to making a finite base change
$B' \to B$ \' etale outside of $T$, we can suppose without loss of generality that $A[m] \subset A(K)$ 
for some integer $m \geq 2$ (this assumption is only necessary in Proposition~\ref{p:parshin-diagram-abelian}).
It follows that $\alpha(P)=\alpha(Q)$. 
Since $\alpha$ is a homomorphism,  $\alpha(P-Q)=0$. 
Proposition \ref{p:parshin-diagram-abelian} then implies that 
$\delta(P-Q)=0$ and that $P-Q \in mA(K)$. 
Thus, we have $P-Q=mR$ for some $R \in A(K)$. 
Observe that $m\alpha(R)= \alpha(mR)= \alpha(P-Q)=0$ in 
the torsion free abelian group $H^1(G, H_1(A_{b_0}, \Z))$. 
We deduce that $\alpha(R)=0$ since $m \neq 0$.  
Therefore, by an immediate induction, 
the same argument shows that $P-Q \in m^kA(K)$ for every $k \in \N$. 
But since $\Omega \coloneqq  A(K)/\Tr_{K/ \C} (A) (\C)$ is a finitely generated abelian group, 
we must have $P-Q$ mod $\Tr_{K / \C}(A)(\C) \in \Omega_{tors}$ because  $m \geq 2$. As the later set is finite, 
the conclusion follows as $t_A= \# \Omega_{tors}$ by  definition.  
\end{proof}


\begin{proof}[Proof of Proposition \ref{p:parshin-diagram-abelian}] 
\label{another-proof-diagram-parshin-local-system} 
Let $\mathcal{A}_0$ be the restriction of $\mathcal{A}$ over $B_0$. Let $\mathcal{L}= \mathcal{N}_{\sigma_O(B_0)/\mathcal{A}_0}$ be
the complex Lie algebra of $\mathcal{A}_0$,  viewed as a vector bundle over $B_0$.  Identifying the kernel
$\Gamma$ of the relative exponential map $\mathcal{L}\to \mathcal{A}_0$ with $(R^1 f_* \Z)^\vee$, we obtain a canonical short 
exact sequence of locally constant analytic sheaves over $B_0$: 
\begin{equation}
\label{e:exp-relative} 
0 \to (R^1f_* \Z )^\vee \to \mathcal{L} \to \mathcal{A}_0^{an} \to 0. 
\end{equation}
\par
When the trace of $\mathcal{A}$ is zero, we obtain a map $\mathcal{A}_0^{an}(B_0) \to H^1(B_0, (R^1f_*\Z)^\vee)$ whose restriction
to the set of rational sections $A(K) \subset  \mathcal{A}_0(B_0)$ is injective and this is already good
enough for the proof of Proposition~\ref{p:parshin-diagram-abelian}.
\par 

In general, consider the multiplication-by-$m$ $B_0$-morphism $[m] \colon \mathcal{A}_0 \to \mathcal{A}_0$. The induced
map $[m] \colon \mathcal{L} \to \mathcal{L}$ on $\mathcal{L}$ is an isomorphism with inverse $[m^{-1}] \colon  \mathcal{L} \to \mathcal{L}$ given by the multiplication
by $m^{-1}$. Notice that we assume $\mathcal{A}_0[m] \subset  A(K) \subset  \mathcal{A}(B_0)$. The map $[m]$ and the
sequence~\eqref{e:exp-relative}  induce the following commutative diagram in the analytic category:

\begin{equation}
\label{d-abelian-family-diagram-general}
\begin{tikzcd}
 &        
&  
&   0 \arrow[d] 
& \\
 & 0 \arrow[r]  \arrow[d] 
 & 0 \arrow[d] \arrow[r] & \mathcal{A}_0[m]   \arrow[d]  &   \\ 
0 \arrow[r] &  (R^1f_* \Z )^\vee  \arrow[d, "m"] \arrow[r] 
 & \mathcal{L} \arrow[r] \arrow[d, "\simeq "]  
 & \mathcal{A}_0 \arrow[d, "m"]  \arrow[r] &  0  \\ 
0 \arrow[r] &  (R^1f_* \Z )^\vee  \arrow[d] \arrow[r] 
 & \mathcal{L} \arrow[r] \arrow[d,]  
 & \mathcal{A}_0 \arrow[r] \arrow[d]  &  0 \\ 
  & Q \arrow[d]       \arrow[r] 
& 0 \arrow[r] 
& 0  &  
\\
& 0 & & &
\end{tikzcd}
\end{equation}

By the snake lemma, 
$\mathcal{A}_0[m] \simeq Q$. Let $S\supset T$ be the centers of the discs in $W$ and let
$K_S$ be the maximal Galois extension of $K$ unramified outside of $S$.  
Then $\mathrm{Gal}(K_S/K) \simeq \widehat{G}$ where $G = \pi_1(B_0; b_0)$. The natural isomorphism $H^1(\widehat{G}, A[m])\simeq  H^1(G,A[m])$ 
induced by the injection $G \to \widehat{G}$ 
(cf.~\cite[I.2.6.b]{serre:galois-cohomology}  
). The Kummer exact sequence $0 \to A(m) \to A(\bar{K}) \xrightarrow{m} A(\bar{K}) \to 0$ 
 gives rise to an exact sequence of Galois cohomology (thus
of algebraic nature)

\begin{equation}
\label{e:kummer-short-cohomology}   
A(K) \xrightarrow{m} A(K) \to H^1(\widehat{G}, A[m]).
\end{equation}

It is well-known that  
the category of local systems on a Eilenberg-MacLane  
$K(\pi, 1)$-space $X$ (i.e., $\pi_i(X)=0$ for all $i>0$) is equivalent to the category of $\pi$-modules. 
As $\mathcal{A}_0$ and $B_0$ are $K(\pi,1)$-spaces, 
there are canonical 
isomorphisms 
\begin{equation}
    \label{e:auxiliare-isomorphisms}
H^1(B_0,(R^1f_* \Z )^\vee)  \simeq H^1( G, \Gamma), \quad H^1(B_0, \mathcal{A}_0[m]) \simeq H^1(G, A[m]), 
\end{equation} 
where $\Gamma = H_1(A_{b_0}, \Z) \simeq (R^1f_* \Z )^\vee_{b_0}$. 
Here, the group $G$ acts naturally on $\Gamma$ by monodromy (cf.~\cite{phung-19-phd}) 
 and acts trivially on $A[m]$.
 \par 
Combining \eqref{e:kummer-short-cohomology} with the cohomology long exact sequences induced by 
Diagram \eqref{d-abelian-family-diagram-general}, we obtain a natural commutative diagram:  

 \begin{equation}
\label{d-abelian-family-diagram-general-2}
\begin{tikzcd}     
 A(K)
 \arrow[d, "m"] \arrow[r] 
 & 
 \mathcal{A}_0(B_0)  \arrow[r] \arrow[d, "m "]  
 &  H^1(B_0, (R^1f_* \Z )^\vee) 
 \arrow[d, "\simeq"]  \\ 
 A(K)     \arrow[d] \arrow[r] 
 & 
\mathcal{A}_0(B_0)   \arrow[r] \arrow[d]  
 &  H^1(B_0, (R^1f_* \Z )^\vee)  \arrow[d]  \\
  H^1(\widehat{G},  A[m])      \arrow[r, "\simeq"] 
& H^1(B_0, \mathcal{A}_0[m]) \arrow[r] 
& H^1(B_0, Q) 
\end{tikzcd}
\end{equation}

By decomposing the first column into $A(K) /mA(K) \hookrightarrow H^1(\widehat{G}, A[m])$ and using the isomorphisms in \eqref{e:auxiliare-isomorphisms}, we obtain as claimed the 
  commutative diagram 
\eqref{d-abelian-family-diagram}.  
\par 
The detailed descriptions of the involved homomorphisms $\alpha, \beta, \gamma$ and the monodromy action of $G$ are given in \cite[Appendix 8]{phung-19-phd}. 
\end{proof}

\subsection{The homomorphism $\alpha$ and the monodromy action of $G$} 
 \label{monodromy-action-fibre-bundle}
 Keep the notation
be as in Proposition~\ref{p:parshin-diagram-abelian}. 
As $A_{b_0}$ is a torus, 
we can fix a collection of smooth geodesics $l_{w_0,w} \colon [0,1] \to A_{b_0}$
such that $l_{w_0,w} (0)=w_0$ and $l_{w_0,w} (1)=w \in A_{b_0}$. 
Now, each section  $\sigma_P \colon B_0 \to \mathcal{A}_{B_0}$ 
induces naturally a section $i_P \colon \pi_1(B_0,b_0) \to \pi_1(\mathcal{A}_{B_0}, w_0)$ of 
\eqref{e:abelian-homotopy-exact-sequence-1} as follows. 
Take any loop $\gamma$ of $B_0$ based at $b_0$, 
we define the section $i_P$ by the formula: 
\begin{equation}
\label{e:definition-of-i-p-main-reduction-step}
i_P([\gamma])=[ l^{-1}_{w_0,\sigma_P(b_0)} \circ \sigma_P(\gamma) \circ l_{w_0,\sigma_P(b_0)}] 
\in  \pi_1(\mathcal{A}_{B_0}, w_0). 
\end{equation} 
\par 
As a convention, we     concatenate oriented paths as above, 
as opposed to the usual composition of homotopy classes, 
so the multiplication order reverses. 
 As $i_P,i_O$ are sections of $\rho$, the difference $i_P-i_O$ satisfies
$\rho(i_P - i_O) =  \rho(i_P)  - \rho(i_O) =0$. 
Therefore,  $\im (i_P- i_O ) \subset \Ker \rho = H_1(A_{b_0}, \Z)$. 
We have just defined a map $ i_P -i_O \colon \pi_1(B_0,b_0) \to H_1(A_{b_0}, \Z)$.  
This
map is well-defined modulo a principal crossed homomorphism induced by different choices
of the paths  $l_{w_0,w}$ (these choices of paths also give rise to the conjugation class of $i_P$).  
\par
By the exact sequence \eqref{e:abelian-homotopy-exact-sequence-1}, it is not hard to check that 
$i_P -i_O$ is a $1$-cocycle of the group $G= \pi_1(B_0, b_0)$ with coefficients 
in $\Gamma=H_1(A_{b_0}, \Z) \simeq \Z^{2\dim A}$. 
By this way,  we obtain a natural induced natural homomorphism of groups:  
\begin{align}
\label{e:homomorphism-alpha}
\alpha \colon A(K) \to H^1(G, \Gamma), \quad P   \mapsto i_P - i_O, 
\end{align}
where the monodromy $G$-action on $\Gamma$ is given by conjugation as follows.  
Let $\lambda \colon [0,1] \to A_{b_0}$ be a loop  
with $\lambda(0)= \lambda(1)= w_0$. 
Let $\gamma \colon [0,1] \to B_0$ 
be a loop in $B_0$ with $\gamma(0)=\gamma(1)=b_0$. 
Let $\gamma' = \sigma_{O} \circ \gamma$.  
By  \eqref{e:abelian-homotopy-exact-sequence-1}, 
$\gamma' \circ \lambda \circ \gamma'^{-1}$ defines an element in $\pi_1(A_{b_0}, w_0)$ 
denoted $[\gamma] \cdot [\lambda]$. 
It is clear that $[\gamma] \cdot [\lambda]$ depends only on 
the homotopy classes $[\lambda]$ and $[\gamma]$ (with base points). 

\subsection{Some metric properties of hyperbolic manifolds} We follow closely \cite{parshin-90}. 
Let $X$ be a complex manifold. 
The \emph{pseudo Kobayashi hyperbolic metric} $d_X \colon X \times X \to X$ 
is defined as follows. 
Let $\rho$ be the Poincar\' e metric on the unit disc $\Delta= \{z \in \C \colon |z|=1\}$.  
\par
Let $x, y \in X$. Consider the data $L$ consisting of 
a sequence of points $x_0=x, x_1, \dots, x_n=y$ in $X$,  
a sequence of holomorphic maps 
$f_i \colon \Delta \to X$ and the pairs $(a_i, b_i) \in \Delta^2$ for 
$i=0, \dots, n$ such that $f_i(a_i)=x_i$ and $f(b_i)=x_{i+1}$. 
Let $H(x,y; L)= \sum_{i=0}^n \rho(a_i, b_i)$. 

\begin{definition}
[cf. \cite{kobay-hyperbolic-definition}]
\label{pseudo Kobayashi hyperbolic metric} 
For $x, y \in X$, we define $d_X(x,y) \coloneqq \inf_{L}   H(x,y;L)$. 
\end{definition}

If $d_X(x,y)>0$ for all distinct $x, y \in X$, i.e., 
when $d_X$ is a metric, 
 $X$ is called a \emph{hyperbolic} manifold. 
Recall the fundamental distance-decreasing property 
(cf. \cite[Proposition 3.1.6]{kobay-hyperbolic}): 

\begin{lemma}
\label{l:distance-decreasing-hyperbolic}
Let $f \colon X \to Y$ be a holomorphic map of complex manifolds. 
Then for all $x, y \in X$, $d_Y(f(x), f(y)) \leq d_X(x,y)$. 
In particular, if $X \subset Y$, we have $d_Y\vert_X \leq d_X$. 
\end{lemma}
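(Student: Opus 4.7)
The plan is to derive the inequality directly from Definition \ref{pseudo Kobayashi hyperbolic metric} by pushing forward chains of holomorphic discs through $f$. This is the standard ``functorial'' proof of the distance-decreasing property, and the only thing to verify is that the push-forward of an admissible chain in $X$ is an admissible chain in $Y$ with the \emph{same} associated Poincaré sum.

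More precisely, fix $x, y \in X$ and let $\varepsilon > 0$. By definition of the infimum, there exists an admissible datum
\[
L = \bigl(x_0 = x, x_1, \dots, x_n = y;\; f_0, \dots, f_{n-1};\; (a_i, b_i) \in \Delta^2 \bigr)
\]
with $f_i \colon \Delta \to X$ holomorphic, $f_i(a_i) = x_i$, $f_i(b_i) = x_{i+1}$, such that $H(x, y; L) = \sum_{i=0}^{n-1} \rho(a_i, b_i) < d_X(x, y) + \varepsilon$. Now form the push-forward chain $f_* L$ in $Y$ by setting $y_i \coloneqq f(x_i)$, $g_i \coloneqq f \circ f_i \colon \Delta \to Y$, and keeping the same pairs $(a_i, b_i)$. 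Since $f$ and each $f_i$ are holomorphic, each $g_i$ is holomorphic, and clearly $g_i(a_i) = y_i$, $g_i(b_i) = y_{i+1}$; so $f_* L$ is an admissible chain joining $f(x)$ to $f(y)$ in $Y$, with the identical Poincaré sum $H(f(x), f(y); f_* L) = H(x, y; L)$.

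Taking the infimum of $H(f(x), f(y); \cdot)$ over all admissible chains in $Y$ (which is a superset of those coming from push-forwards), we obtain
\[
d_Y(f(x), f(y)) \;\leq\; H(f(x), f(y); f_*L) \;=\; H(x, y; L) \;<\; d_X(x, y) + \varepsilon.
\]
Letting $\varepsilon \to 0^+$ yields $d_Y(f(x), f(y)) \leq d_X(x, y)$, as required. For the final assertion, if $X \subset Y$ is a complex submanifold (or open subset), the inclusion $\iota \colon X \hookrightarrow Y$ is holomorphic, and the first part applied to $\iota$ gives $d_Y(x, y) = d_Y(\iota(x), \iota(y)) \leq d_X(x, y)$ for all $x, y \in X$, i.e.\ $d_Y|_X \leq d_X$.

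There is essentially no obstacle here: the result is a direct unwinding of the definition, and the only subtlety worth noting is to observe that the infimum in $Y$ is taken over a larger collection of chains than those of the form $f_* L$, which is precisely why we get an inequality (and not an equality) in general.
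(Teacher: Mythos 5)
Your proof is correct and coincides with the paper's own argument: both push a chain of holomorphic discs forward through $f$, note that the Poincaré sum is unchanged, and conclude by taking the infimum (your $\varepsilon$-argument just spells out this last step). Nothing further is needed.
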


\begin{proof}
For every data $L=\{x_i, f_i, a_i, b_i\}$ 
associated with the points $x, y$, 
we have a data $f(L)= \{f(x_i), f \circ f_i, a_i, b_i\}$  associated with the points $f(x), f(y)$ 
and $H(x,y;L)= H(f(x), f(y); f(L))$. 
The lemma now follows   from the definition. 
\end{proof}

A complex space $X$ is \emph{Brody 
hyperbolic} if there is no nonconstant holomorphic map 
$\C \to X$.

\begin{theorem} [Green]
\label{t:green-hyperbolic-embedding} 
Let $X$ be a relatively compact open subset of a complex manifold $M$.  
Let $D \subset X$ be a closed complex subspace. 
Denote by $\bar{X}, \bar{D}$ the closures of $X$ and $D$ in $M$. 
Assume that $\bar{D}$ and $\bar{X} \setminus \bar{D}$ are Brody hyperbolic. 
Then $X \setminus D$ is hyperbolic and we have  
$d_{X \setminus D} \geq \rho\vert_{X \setminus D}$ 
for some Hermitian metric $\rho$ on $M$. 
In particular, if $M$ is compact and $\lambda$ is any Riemannian metric on $|M|$ then 
there exists $c >0$ such that 
$d_{X \setminus D} \geq c \lambda \vert_{X \setminus D}$. 
\end{theorem}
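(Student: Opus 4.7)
The plan is to prove the inequality $d_{X \setminus D} \geq \rho\vert_{X \setminus D}$ by a standard Brody reparametrization argument, reducing the statement to the Brody hyperbolicity hypotheses on $\bar{D}$ and $\bar{X} \setminus \bar{D}$. The final ``in particular'' clause is then automatic, since on a compact manifold any two continuous Hermitian/Riemannian metrics are bi-Lipschitz equivalent.

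First, I would localize the problem. Fix any Hermitian metric $\lambda$ on $M$ and, for each point $p \in \bar{X}$, try to find an open neighborhood $U_p$ of $p$ in $M$ and a constant $c_p > 0$ such that
\begin{equation*}
d_{X \setminus D}(x,y) \geq c_p \, \lambda(x,y) \quad \text{for all } x, y \in (X \setminus D) \cap U_p.
\end{equation*}
Since $\bar{X}$ is compact (as $X$ is relatively compact in $M$), a finite cover argument together with a standard ``chain'' trick on the Kobayashi pseudometric (using that any two points far apart in $\lambda$ can be joined by a bounded number of Poincar\'e-short hops, each controlled locally) will upgrade the local estimates to the desired global bound. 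So the heart of the proof is the local estimate near an arbitrary $p \in \bar{X}$.

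Second, I would argue the local estimate by contradiction and Brody reparametrization. Assume no such $c_p$ exists at some $p$. Then one can produce sequences $x_n, y_n \in X \setminus D$ converging to $p$ with $d_{X \setminus D}(x_n, y_n) \to 0$ while $\lambda(x_n, y_n)$ stays comparable to a fixed positive scale (after rescaling). Unwinding the definition of $d_{X \setminus D}$ as an infimum over Poincar\'e chains, this yields holomorphic discs $f_n \colon \Delta \to X \setminus D$ whose images in $M$ ``spread out'' at $p$ relative to $\lambda$ while their Poincar\'e derivatives blow up. Applying Brody's reparametrization lemma inside the compact set $\bar{X} \subset M$ yields, after passing to a subsequence, a nonconstant entire curve
\begin{equation*}
g \colon \mathbb{C} \to \bar{X} \subset M.
\end{equation*}

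Third, I would derive a contradiction from the Brody hyperbolicity hypotheses. The image of $g$ lies in the compact set $\bar{X}$, and moreover every value of $g$ is a limit of points of $X \setminus D$, so $g(\mathbb{C}) \subset \bar{X} \setminus (D \setminus \bar{D}) = \bar{X}$ is in fact contained in $\bar{X}$ and hits $\bar{D}$ only along a closed analytic (hence discrete, unless $g(\mathbb{C}) \subset \bar{D}$) subset $E \subset \mathbb{C}$. If $g(\mathbb{C}) \subset \bar{D}$, this directly contradicts the Brody hyperbolicity of $\bar{D}$. Otherwise, $g \colon \mathbb{C} \setminus E \to \bar{X} \setminus \bar{D}$ is a nonconstant holomorphic map with at most discrete removable singularities in the target side; by a Big Picard-type extension (or simply because $\bar{X} \setminus \bar{D}$ is Brody hyperbolic and $g$ is bounded into the compact $\bar{X}$, forcing $g|_{\mathbb{C} \setminus E}$ to be constant on each component of $\mathbb{C} \setminus E$), $g$ is forced to be constant, again a contradiction.

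The main obstacle, and the step requiring the most care, is the second one: extracting a genuinely nonconstant Brody entire curve whose image is correctly located in $\bar{X}$, and showing that the failure of the local inequality really forces such a curve to exist. One has to chain together multiple Poincar\'e discs, track where in $M$ the rescalings accumulate, and ensure that the limiting entire curve does not escape $\bar{X}$ or degenerate. The third step, by contrast, is a fairly clean dichotomy once the Brody curve is in hand, using the two Brody hyperbolicity hypotheses of the theorem in complementary ways.
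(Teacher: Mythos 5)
The paper itself gives no proof of this statement---it is quoted directly from Green (\cite[Theorem 3]{gre-78})---and the standard proof of Green's criterion is indeed the Brody reparametrization strategy you outline, so your overall plan is the right one. Your first step can even be streamlined: working with the infinitesimal Kobayashi--Royden metric, failure of the bound $d_{X\setminus D}\geq\rho$ already produces holomorphic discs $f_n\colon\Delta\to X\setminus D$ whose $\lambda$-derivatives at $0$ blow up, so no local-to-global chaining is needed before applying Brody's lemma inside the compact set $\bar X$; and your treatment of the final ``in particular'' clause is fine.

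The genuine gap is in your third step. First, there is no reason for $g^{-1}(\bar D)$ to be a closed analytic, hence discrete, subset of $\C$: $\bar D$ is only the topological closure in $M$ of a subspace that is analytic in the open set $X$, and near $\partial X$ it need not be analytic at all (your identity $\bar X\setminus(D\setminus\bar D)=\bar X$ is vacuous and does not locate $g$). Second, and more seriously, even granting a discrete exceptional set $E$, the hypothesis that $\bar X\setminus\bar D$ is \emph{Brody} hyperbolic controls only maps defined on all of $\C$; it says nothing about a holomorphic map $\C\setminus E\to\bar X\setminus\bar D$. Since $\bar X\setminus\bar D$ is in general noncompact, Brody hyperbolicity does not imply Kobayashi hyperbolicity, so the distance-decreasing argument (which would indeed kill maps from $\C^{*}$ into a Kobayashi hyperbolic target) is unavailable, and invoking a Big Picard-type extension theorem is circular: such theorems presuppose hyperbolic embeddedness of the complement, which is exactly what is being proved. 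The missing ingredient is a dichotomy for the Brody limit: since the reparametrized discs omit $D$ and converge uniformly on compacts, Hurwitz's theorem applied to local defining equations of $D$ (this is where one uses that $D$ is locally principal---a hypersurface, as in the paper's application where $\DD$ is a divisor; for general closed subspaces the criterion is usually phrased as ``every entire curve in $\bar X$ lying in $\bar D$ or omitting $D$ is constant'') shows that $g$ either lies entirely in $\bar D$, contradicting Brody hyperbolicity of $\bar D$, or omits $D$ altogether, to be confronted with the hypothesis on $\bar X\setminus\bar D$ (the case where such a curve touches $\bar D\setminus D\subset\partial X$ is precisely why Green formulates the hypothesis in the ``omitting $D$'' form). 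Allowing the limit curve to cross $\bar D$ along a set $E$ and then arguing on $\C\setminus E$, as you do, does not yield a contradiction.
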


\begin{proof}
See \cite[Theorem 3]{gre-78}. 
\end{proof}

\begin{theorem}
[Green] 
\label{t:green-hyperbolic-subvariety-abelian} 
Let $X \subset A$ be a complex subspace and let $D$ be a hypersurface of a complex torus $A$. 
Then the following hold: 
\begin{enumerate} [\rm (i)]
\item
 $X$ is hyperbolic if and only if $X$ does not contain any translates of 
a nonzero complex subtorus of $A$;  
\item
if $D$ does not contain any translates of nonzero subtori of $T$ 
then $A\setminus D$ is complete hyperbolic. 
\end{enumerate}

\end{theorem}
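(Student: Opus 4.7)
The plan is to deduce part (i) from Brody's reparametrization lemma combined with the Bloch--Ochiai theorem on entire curves in complex tori, and then to obtain part (ii) by applying Theorem \ref{t:green-hyperbolic-embedding} once the Brody hyperbolicity of $A \setminus D$ has been established via an inductive argument on $\dim A$.

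For part (i), the direction ``$X$ hyperbolic $\Rightarrow$ no translated subtori'' is routine: if $a + T \subset X$ for some positive-dimensional subtorus $T$, composing the universal cover $\C^{\dim T} \to T$ with a generic $\C$-linear embedding $\C \hookrightarrow \C^{\dim T}$ and translation by $a$ yields a nonconstant holomorphic map $\C \to X$, and Lemma \ref{l:distance-decreasing-hyperbolic} together with the vanishing of the Kobayashi pseudo-distance on $\C$ then forces $d_X$ to vanish on a nontrivial pair of points, contradicting hyperbolicity. For the converse I would assume $X$ is not hyperbolic, apply Brody's reparametrization lemma inside the compact ambient torus $A$ to produce a nonconstant entire curve $f \colon \C \to X$, and then invoke the Bloch--Ochiai theorem: the Zariski closure $\overline{f(\C)}^{\mathrm{Zar}}$ is a translate $a + T$ of a positive-dimensional complex subtorus of $A$; since $X$ is closed, $a + T \subset X$, contradicting the hypothesis.

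For part (ii), Theorem \ref{t:green-hyperbolic-embedding} reduces the complete hyperbolicity of $A \setminus D$ in the ambient $A$ to verifying the Brody hyperbolicity of both $D$ and $A \setminus D$; the former is immediate from part (i). For the latter I would argue by contradiction: suppose $f \colon \C \to A \setminus D$ is nonconstant entire. Bloch--Ochiai yields $a + T := \overline{f(\C)}^{\mathrm{Zar}}$ with $\dim T \geq 1$, and the translated curve $g \colon \C \to T$ lands in $T \setminus D'$ where $D' := (D - a) \cap T$. The descent of the hypothesis is the key clean step: any translate $t' + T' \subset D'$ of a nonzero subtorus $T' \subset T$ would yield $(a + t') + T' \subset D$, contradicting the assumption on $D$; similarly $a + T \not\subset D$ forces $D' \subsetneq T$. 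An induction on $\dim A$, with base case the elementary fact that an elliptic curve minus a nonempty finite subset is hyperbolic, then disposes of every case with $\dim T < \dim A$.

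The genuinely delicate case is $T = A$, where $f(\C)$ is Zariski dense in $A$. Here I would invoke the classical theorem on abelian varieties that an effective divisor not containing any translate of a positive-dimensional subtorus is ample, together with a sharpening of Bloch--Ochiai guaranteeing that the Zariski closure of an entire curve into $A \setminus D$ is itself contained in $A \setminus D$; the ampleness of $D$ would then force $a + T = A \subset A \setminus D$, an absurdity. The main obstacle throughout is the transcendental input of Bloch--Ochiai, whose proof rests on Nevanlinna-theoretic or jet-differential methods for complex tori and is precisely what makes the conclusions here resistant to purely algebraic approaches.
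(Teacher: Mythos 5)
The paper does not actually prove this statement: it is quoted as Green's theorem with the proof replaced by the citation to \cite{gre-78} (Theorems 1--2), whose argument runs through Brody reparametrization inside the compact torus rather than through Bloch--Ochiai. Judged on its own terms, your part (i) is fine as a sketch (Brody's lemma applies because $X$ is a closed, hence compact, subspace of $A$, and the log/analytic form of Bloch--Ochiai for complex tori gives the translate $a+T\subset X$). The problems are in part (ii). First, your induction silently assumes $D':=(D-a)\cap T$ is nonempty: you check that $D'$ contains no translates and that $D'\subsetneq T$, but if $(a+T)\cap D=\varnothing$ the curve $g\colon\C\to T$ meets no divisor at all and the induction hypothesis produces no contradiction. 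What is needed (and is true) is the separate lemma that, under the hypothesis on $D$, every translate of a positive-dimensional subtorus meets $D$: if $(a+T)\cap D=\varnothing$, then for the quotient $\pi\colon A\to A/T$ one has $\pi(D)\subsetneq A/T$, so $D\subset\pi^{-1}(\pi(D))$ forces the codimension-one components of $D$ to be $T$-saturated, i.e.\ $D$ contains translates of $T$ -- a contradiction. This step is missing from your argument.

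Second, the Zariski-dense case $T=A$, which is the crux, is not actually proved. The fact ``an effective divisor containing no translates of subtori is ample'' is a theorem about abelian varieties, whereas $A$ here is only a complex torus; one must first show that the Hermitian form of $\OO(D)$ has trivial kernel (its kernel is a subtorus under which $D$ would be invariant), which is also how one sees that $A$ is then abelian. More seriously, the ``sharpening of Bloch--Ochiai'' you invoke -- that the Zariski closure of an entire curve in $A\setminus D$ is disjoint from $D$ -- is not part of Bloch--Ochiai (which only gives that the closure is a translate of a subtorus); it is a genuinely deep later theorem (Siu--Yeung, Noguchi, Noguchi--Winkelmann--Yamanoi), and for ample $D$ it already contains the Brody hyperbolicity of $A\setminus D$ that you are trying to establish, so quoting it at this point essentially begs the question (and, if available, would render your induction superfluous). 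Finally, completeness is never obtained: Theorem \ref{t:green-hyperbolic-embedding} as stated yields hyperbolicity together with a Hermitian lower bound for $d_{A\setminus D}$ (hyperbolic embedding), not complete hyperbolicity; one needs in addition the standard Kiernan--Kobayashi fact that a hyperbolically embedded complement of a closed complex subspace of a compact space is complete hyperbolic.
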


\begin{proof}
See \cite[Theorems 1-2]{gre-78}.
\end{proof}

Thanks to the distance-decreasing property of the pseudo-Kobayashi hyperbolic metric, 
we have the following important property of sections (cf.~\cite{parshin-90}).  

\begin{lemma}
\label{l:section-geodesic}
Let $f \colon X \to Y$ be a holomorphic map between complex spaces. 
Suppose that $\sigma \colon Y \to X$ is a holomorphic section. 
Then $\sigma(Y)$ is a totally geodesic subspace of $X$, i.e., for all 
$x, y \in Y$, one has 
$d_Y(x, y)= d_{X} (\sigma(x), \sigma(y))$. 
\end{lemma}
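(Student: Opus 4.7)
The plan is to exploit the distance-decreasing property (Lemma \ref{l:distance-decreasing-hyperbolic}) in both directions, using the defining identity $f\circ \sigma = \mathrm{id}_Y$ of a section. There is essentially only one idea involved, and the argument reduces to a pair of inequalities that combine into the desired equality.

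First, I would apply Lemma \ref{l:distance-decreasing-hyperbolic} to the holomorphic section $\sigma \colon Y \to X$ at the points $x, y \in Y$. This yields the inequality
\begin{equation*}
d_X(\sigma(x), \sigma(y)) \leq d_Y(x, y).
\end{equation*}
This is the easy direction: it just says that $\sigma$ does not increase the Kobayashi pseudo-distance.

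Next, I would apply the same distance-decreasing property, but now to the holomorphic map $f \colon X \to Y$, evaluated at the points $\sigma(x), \sigma(y) \in X$. This gives
\begin{equation*}
d_Y(f(\sigma(x)), f(\sigma(y))) \leq d_X(\sigma(x), \sigma(y)).
\end{equation*}
Since $\sigma$ is a section of $f$, we have $f\circ \sigma = \mathrm{id}_Y$, so the left-hand side equals $d_Y(x,y)$. Combining this with the first inequality produces equality, which is exactly the assertion that $\sigma(Y)$ is a totally geodesic subspace of $X$ with respect to the pseudo Kobayashi hyperbolic metric.

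There is essentially no obstacle here: the statement is a formal consequence of functoriality of $d_{(-)}$ together with the section identity $f\circ \sigma = \mathrm{id}_Y$. The only conceptual point worth highlighting is that the Kobayashi construction is genuinely functorial for arbitrary holomorphic maps between complex spaces (not just manifolds), so no hypothesis on smoothness of $X$ or $Y$ is required beyond what Definition \ref{pseudo Kobayashi hyperbolic metric} already supplies.
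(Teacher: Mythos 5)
Your proposal is correct and matches the paper's proof exactly: both apply the distance-decreasing property (Lemma \ref{l:distance-decreasing-hyperbolic}) once to $\sigma$ and once to $f$, then use $f\circ\sigma=\mathrm{id}_Y$ to close the chain of inequalities into an equality.
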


\begin{proof}
It is a consequence of distance-decreasing property of the pseudo-Kobayashi hyperbolic metric 
Lemma \ref{l:distance-decreasing-hyperbolic}: $
d_Y (x, y) = d_Y (f(\sigma(x)), f(\sigma(y))) \leq d_{X} (\sigma(x), \sigma(y)) \leq d_Y (x, y)$.
\end{proof}

\section[Proof of Theorem D]{Growth of generalized integral sections and Proof of Theorem \ref{t:parshin-generic-emptiness-higher-dimension}} 
 \label{s:parshin-generic-emptiness-higher-dimension}

\subsection{Some geometry of Riemann surfaces}  
 
In the proof of Theorem \ref{t:parshin-generic-emptiness-higher-dimension}, we shall need 
the following general auxiliary lemma to control the geometry of a countable closed subset (note that our lemma is more general than the last lemma in \cite{parshin-90}). 

\begin{lemma}
\label{l:refined-cover-countable-closed}
Let $R$ be a closed countable subset of a compact surface  $B$   equipped with a metric $\rho$. 
Let $T\subset B$ be a finite subset. 
Then for every $\varepsilon >0$, 
$R \cup T$ is contained in a finite union $Z$ of disjoint closed discs each of radius at most $\varepsilon$ 
such that $\vol_\rho Z \leq \varepsilon$ and 
such that any two distinct points in $T$ are contained in distinct discs. 
\end{lemma}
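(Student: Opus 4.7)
The plan is to exploit the countability of $R$ to cover $R\cup T$ by an arbitrarily thin open neighborhood, extract a finite union of small closed discs via compactness, and merge overlapping discs into a pairwise disjoint family while controlling all the required quantities.

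Since $R$ is closed in the compact space $B$, it is itself compact and has Riemannian measure zero (being countable). Fix an auxiliary scale $\eta>0$, to be chosen later small compared to $\varepsilon$, to $d_0:=\min\{\rho(s,t):s\neq t\in T\}$ (with $d_0=+\infty$ if $\#T\leq 1$), and to the geometric constants $r_0(B,\rho)$, $c_0(B,\rho)$ from Lemma~\ref{l:length-boundary}. Enumerating $R=\{x_n\}_{n\geq 1}$, pick for each $n$ a radius $r_n>0$ so small that $r_n\leq 2^{-n-3}\eta$ and $\vol_\rho V(x_n,r_n)\leq 2^{-n-3}\varepsilon$, and extract from the open cover $\{D(x_n,r_n)\}_n$ of the compact set $R$ a finite subcover $\mathcal F_R$. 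For each $t\in T$ adjoin a closed disc $V(t,s_t)$ with $s_t\leq\eta/(4\#T)$ and $\vol_\rho V(t,s_t)\leq\varepsilon/(4\#T)$. The resulting finite family $\mathcal F$ of closed discs covers $R\cup T$ and satisfies $\sum_{V\in\mathcal F}\operatorname{rad}(V)\leq\eta/2$ and $\sum_{V\in\mathcal F}\vol_\rho(V)\leq\varepsilon/2$.

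Next, form the connected components $W_1,\dots,W_m$ of $\bigcup_{V\in\mathcal F}V$. A chain-of-overlapping-discs argument gives $\mathrm{diam}_\rho(W_j)\leq 2\sum_{V\in\mathcal F_j}\operatorname{rad}(V)\leq\eta$, where $\mathcal F_j\subset\mathcal F$ is the subfamily producing $W_j$. Choose $w_j\in W_j$ and set $Z_j:=V(w_j,\mathrm{diam}_\rho(W_j))\supset W_j$; each $Z_j$ is a closed geodesic disc of radius at most $\eta\leq\varepsilon$. By Lemma~\ref{l:length-boundary},
\[
\sum_j \vol_\rho(Z_j)\leq c_0\sum_j\operatorname{rad}(Z_j)^2\leq c_0\eta^2\leq\varepsilon
\]
provided $\eta\leq\sqrt{\varepsilon/c_0}$. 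Moreover, distinct points of $T$ lie at distance $\geq d_0\geq 4\eta$, so they cannot belong to the same cluster $W_j$ and consequently fall in distinct $Z_j$. Finally, since the $Z_j$ are only finitely many, whenever two of them meet we replace the pair by a single enclosing geodesic disc whose radius is bounded by the sum of the two original radii plus the distance between the centers; the process terminates after finitely many steps and yields the desired pairwise disjoint family $Z=\sqcup_j Z_j$ after absorbing the finite merging overhead into $\eta$.

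\textbf{Main obstacle.} The delicate point is the final merging step: iterative pairwise merging of enclosing discs can, a priori, grow the radii by a factor bounded only by the number of clusters $m$, which itself depends on $\eta$, creating a circular dependency between the choice of $\eta$ and the a priori bound on $m$. The cleanest way around this circularity is a transfinite induction on the Cantor--Bendixson rank of the countable compact set $R$: the base case where $R$ is finite and discrete is trivial, and the inductive step applies the lemma to the strictly smaller-rank derived set $R'$ in order to absorb the entire accumulation structure of $R$ into a finite disjoint family of small discs, leaving only the finitely many isolated points of $R\setminus R'$ to be treated individually with disjoint tiny discs. This avoids the merging circularity altogether.
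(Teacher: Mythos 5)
The gap you flag in your merging step is real, and the Cantor--Bendixson repair you propose does not close it as stated. Two concrete problems. First, taking one derived set does \emph{not} strictly lower the Cantor--Bendixson rank once the rank is infinite: writing $(R')^{(\beta)}=R^{(1+\beta)}$ and using $1+\beta=\beta$ for infinite $\beta$, a compact countable $R$ of rank $\omega+1$ has $R'$ again of rank $\omega+1$, so the transfinite induction "apply the lemma to the strictly smaller-rank set $R'$" never gets started at infinite ranks. Second, $R\setminus R'$ is in general infinite (already for $R=\{1/n\}\cup\{0\}$), so there are no "finitely many isolated points of $R\setminus R'$" to treat individually; what is true is that after covering a \emph{neighbourhood} of $R'$ only finitely many points of $R$ escape, but to run that compactness argument you need the statement strengthened so that the \emph{interiors} of the discs cover $R'$, and you must then also guarantee that the leftover points do not lie on the boundaries of the discs already placed, since otherwise no new closed disc around them can be disjoint from the old ones; the $T$-separation and the volume budget would likewise have to be threaded through the induction. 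None of this is in your sketch, and your primary construction also loses the $T$-separation once the uncontrolled merging cascade starts, so as written the proof has a genuine hole.

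The paper's proof avoids merging altogether by a greedy one-disc-at-a-time construction, and the idea you are missing is a genericity trick on the radii. Enumerate $R\cup T$ with the points of $T$ listed first, and build closed discs $V_1,V_2,\dots$ recursively: at each step, center the next disc at the first point of the enumeration not yet lying in the union of the closed discs already built; since that point is outside a closed set, a sufficiently small radius $r_n<\delta/2^n$ makes $V_n$ disjoint from all previous discs (and from $T$ minus its center at the initial steps), and because $R$ is countable while the available radii form an interval, the radius can moreover be chosen so that $\partial V_n\cap R=\varnothing$. This boundary-avoidance is the key point: it forces $R\cup T\subset\bigcup_n\bigl(V_n\setminus\partial V_n\bigr)$, so compactness of the closed set $R\cup T$ extracts a finite subfamily, which is disjoint \emph{by construction}, keeps distinct points of $T$ in distinct discs, and has volume at most $c_0\sum_{n\geq1}(\delta/2^n)^2<\varepsilon$ by Lemma~\ref{l:length-boundary}. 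That single trick replaces both your component-merging step and the proposed transfinite induction.
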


\begin{proof}
Write $T= \{ t_1, \dots, t_p \}$ and enumerate $R\cup T= (x_n)_{n \geq 1}$ 
such that $x_1=t_1, \dots, x_p=t_p$.  
Let $\delta= (\varepsilon /c)^{1/2} > 0$ where $c\gg 1$ is some large constant to be chosen later.  
We define by recurrence  $(y_n)_{n \geq 1} \subset R$ such that each $y_n$ is 
contained in a small closed disc $V_n$ of $B$ of radius $r_n < \delta/2^n$. 
For $n=1$, let $y_1\coloneqq x_1=t_1$ and let $D \subset B$ be the closed disc of radius $\delta/2$ centered at $y_1$. 
Since $R$ is countable and $]0, \delta/2[$ is uncountable, 
there exists a closed disc $V_1 \subset D$ of radius $r_1\in ]0, \delta/2[$ also centered at $x_1$ such that 
$\partial V_1 \cap R = \varnothing$ and $V_1 \cap T = \varnothing$. 
Similarly, we can find successively for $i= 2, \dots, p$ 
a closed disc $V_i$  
of radius $r_i \in ]0, \delta/2^i[$  
such that 
$t_i \in V_i$,  $\partial V_i \cap R = V_i \cap T = \varnothing$  
and  $V_i \cap (V_1 \cup \cdots \cup V_{i-1})= \varnothing$.    
\par
For $n=k+1 > p $, let $m \geq 1$ be the smallest integer such that $x_{m} \notin V_1 \cup \cdots \cup V_k$. 
Define $y_{k+1} \coloneqq x_{m}$ and let $V_{k+1} \subset B$ be a closed disc of radius $r_{k+1} \in ] 0,  \delta/2^{k+1}[$ 
centered at $y_{k+1}$ 
such that 
$\partial V_{k+1} \cap R = \varnothing$ and $V_{k+1} \cap (V_1 \cup \cdots \cup V_k) = \varnothing$. 
Such $V_{k+1}$ exists since $R$ is countable. 
By construction,   $R \cup T \subset \cup_{n \geq 1} (V_n \setminus \partial V_n)$. 
As $R \cup T\subset B$ is closed and $B$ is compact, $R\cup T $ is compact. 
Hence, $R\cup T$ is contained in a finite union $Z$ of disjoint closed discs $Z=V_{n_1} \cup \cdots \cup V_{n_q}$. 
It follows that $\{1, \dots, m\} \subset \{n_1, \dots, n_q\}$ since $t_i \in V_i$ 
but $t_i \notin V_j$ for all $i \in \{1, \dots, p\}$ and $j \neq i$. 
Thus, distinct points in $T$ belong to distinct discs of $Z$.  
\par
By compactness of $B$, there exists  $c_0>0$ such that 
for every small enough $r>0$, we have for every point $b \in B$ (cf.~Lemma~\ref{l:length-boundary}) that 
$\vol_\rho D(b, r) \leq c_0 r^2$. 
Therefore, for all $c \gg c_0$:  
\[
\pushQED{\qed} 
\vol Z \leq \sum_{n\geq 1} \vol V_n < c_0 \sum_{n \geq 1} (\delta/2^n)^2  
< c_0 \frac{\varepsilon}{c}  \sum_{n \geq 1} 4^{-n} <  \varepsilon.   \qedhere 
\popQED
\]
\let\qed\relax
\end{proof}

\subsection{Proof of Theorem \ref{t:parshin-generic-emptiness-higher-dimension}}
 We can now give the proof of the main result of the paper. We follow closely the strategy of Parshin \cite{parshin-90}  and carefully make all the arguments effective. 
 
\begin{proof}[Proof of Theorem \ref{t:parshin-generic-emptiness-higher-dimension}] 
Let $n\coloneqq \dim A$. 
 We can clearly suppose that $W$ contains exactly 
$t = \#T$ disjoint closed discs $(W_t)_{t\in T}$  centered at the points of $T$. Define 
\begin{equation}
    \label{e:epsilon-proof-main-theorem-A} 
    \varepsilon \coloneqq  \frac{1}{3} \min (\mathrm{rad}(B,d)), \min \{d(W_u, W_t), u,t \in T, u \neq t\}) 
\end{equation}
where $d(W_u, W_t)$ denotes the $d$-distance between $W_u$ and $W_t$ and $\mathrm{rad}(B,d) >0$ denotes the injectivity  radius of $B$. Since the discs $W_t$'s are disjoint and closed, we have $\varepsilon >0$. 
\par 
Consider the non-hyperbolic locus 
\begin{equation}
\label{d:definition-V-hyperbolic-locus}
V \coloneqq \{ b \in B \colon \DD_b \text{ is not hyperbolic}  \}.
\end{equation}
\par 
Then $V$ is an analytic closed subset of $B$ since hyperbolicity is an analytic open property on the base 
in a proper holomorphic family (cf. \cite{brody-78}). 
Observe that $V \subset Z(\mathcal{A}, \DD)$ by 
Theorem \ref{t:green-hyperbolic-subvariety-abelian} where $Z(\mathcal{A}, \DD)$ 
is defined in Lemma \ref{l:hilbert-dedekind-countabe}. 
Since $\DD_K=D$ does not contain any translates of nonzero abelian subvarieties of $A$, 
it follows that $Z(\mathcal{A}, \DD)$ and thus $V$ are at most countable by Lemma \ref{l:hilbert-dedekind-countabe}. 
We can thus apply Lemma \ref{l:refined-cover-countable-closed} to obtain a finite union $Z_\varepsilon$ of disjoint closed discs each of $d$-radius at most $\varepsilon$ and   
such that $V \subset Z_\varepsilon$. Then it follows
from the definition of $\varepsilon$ in that we can enlarge the discs $W_t$'s  into larger disjoint
closed discs $W'_t$'s 
such that $B'_0 \coloneqq B\setminus (\cup_{t \in T} W'_t) \subset B_0$   is a deformation retract of $B_0$ and
$Z_\varepsilon \subset \cup_{t \in T} W't'$. In  particular, $\pi_1(B_0')= \pi_1(B_0)$.  Clearly, it suffices to prove the theorem for $B'_0 \subset B_0$.  Hence, up to replacing $B_0$ by $B'_0$, we can suppose that $V \subset W$. 
\par
Note that $B_0$ is an unbordered hyperbolic Riemann surface.  
Let $\alpha_1, \dots, \alpha_k$ be a fixed system of  simple generators of 
the fundamental group $\pi_1(B_0, b_0)$ for some $b_0 \in B_0$ 
(cf. Section~\ref{l:primitive-simple-basis-fundamental-group}). 
Notice that $k= \rank (\pi_1(B_0))$.
\par
Fix  a Hermitian metric $\rho$ on $\mathcal{A}$. 
Let $P \in I_s$, i.e., $P \in A(K)$ such that $P$ is an 
$(S, \DD)$-integral section for some $S \subset B$ with $\# \left(S \cap B_0 \right) \leq s$. 

By Theorem \ref{t:collars-for-linear-hyperbolic-bound},  
there exist $b \in B_0$ and  simple loops 
$\gamma_1, \dots,  \gamma_k$ based at $b$
 representing respectively the homotopy classes $\alpha_1, \dots, \alpha_k$ 
 up to a single conjugation by using a fixed
collection of chosen paths $(c_{b_0,b})_{b \in B_0}$ in $B_0$ (cf. Definition~\ref{d:conjugation-path-1}) 
such that  $\gamma_j \subset B_0 \setminus S$ and that 
\begin{equation}
\label{e:strong-uniform-abelian-proof} 
\length_{d_{B_0 \setminus S}} (\gamma_j) \leq L(s + 1)  , \quad  \text{for every } j=1, \dots, k,  
\end{equation} 
for some constant $L >0$ independent of $s$, $S$, $b$ and $P$. 
 \par
 By Theorem \ref{t:green-hyperbolic-subvariety-abelian}  and by our reduction to the case $V \subset W$, 
the varieties $\DD_b$ and $\mathcal{A}_b \setminus \DD_b$ are hyperbolic for every $b \in B_0$. 
As we can always suppose that there is at least one closed disc (of strictly positive radius) in the union $W$, we can assume that 
  $B_0$ is hyperbolic. 
  \par 
Let $h \colon \C \to (\mathcal{A} \setminus \DD)\vert_{B_0 }$ be a holomorphic map. 
The holomorphic map $f \circ h \colon \C \to  B_0$ must be constant since $B_0$ 
is hyperbolic. 
Thus, $h$ factors through $\mathcal{A}_b \setminus \DD_b$ for some $b \in B_0$. 
Since $b \in B_0$, we have $b \notin V$ by the definition of $B_0$. 
Then by the definition of $V$ (cf. \eqref{d:definition-V-hyperbolic-locus}), 
Theorem \ref{t:green-hyperbolic-subvariety-abelian} 
implies that $\mathcal{A}_b \setminus \DD_b$ is hyperbolic. 
It follows that $h$ is constant. 
Therefore, up to enlarging
slightly furthermore $W$ if necessary, we see that the analytic closure of $(\mathcal{A}\setminus D)\vert_{B_0}$ in $\mathcal{A}$ is Brody hyperbolic. 
Similarly, the analytic closure $\DD\vert_{B_0 }$ is also Brody hyperbolic.  
Hence, Theorem \ref{t:green-hyperbolic-embedding} 
 implies that there exists $c>0$ such that 
$d_{(\mathcal{A} \setminus \DD)\vert_{B_0}} \geq c \rho\vert_{(\mathcal{A} \setminus \DD)\vert_{B_0}}$. 
\par
Now, let $\sigma_P \colon B \to \mathcal{A}$ 
be the corresponding section of the rational point $P$. 
Notice that for
every $j \in \{1, \dots, k\}$, we have by the definition of $(S, \DD)$-integral sections that: 
\begin{equation*}
\sigma_P(\gamma_j)  \subset \sigma_P(B_0 \setminus S) 
\subset  (\mathcal{A} \setminus \DD)\vert_{B_0 \setminus S}. 
\end{equation*}
\par 
 It follows that for every $j \in \{1, \dots, k\}$, we have: 
\begin{align*}
\length_\rho (\sigma_P(\gamma_j)) 
& \leq c^{-1} \length_{d_{(\mathcal{A} \setminus \DD)\vert_{B_0}}} 
(\sigma_P (\gamma_j)) 
 & \\
& \leq 
 c^{-1} \length_{d_{(\mathcal{A} \setminus \DD)\vert_{B_0 \setminus S}}} 
(\sigma_P (\gamma_j)) 
& \text{(as } ( \mathcal{A} \setminus \DD)\vert_{B_0 \setminus S} \subset (\mathcal{A} \setminus \DD)\vert_{B_0})  
\\
& = 
c^{-1} \length_{d_{B_0\setminus S}} (\gamma_j)  
&  \text{(by Lemma \ref{l:section-geodesic} )}
\\
& \leq  c^{-1}L(s+1)
& \text{(by \eqref{e:strong-uniform-abelian-proof})} \numberthis \label{e:strong-uniform-abelian-proof-bound-riemannian}
\end{align*}
 \par
Let $\sigma_O$ be the zero section of $\mathcal{A} \to B$. 
Denote $w_0= \sigma_O(b_0) \in  {A}_{b_0}$ where $A_{b_0} \coloneqq \mathcal{A}_{b_0} \subset \mathcal{A}$.   
Recall the following short exact sequence 
associated with the locally trivial fibration $\mathcal{A}_{B_0} \to B_0$  
 (cf. \eqref{e:abelian-homotopy-exact-sequence-1} which follows Proposition \ref{p:parshin-diagram-abelian}):
\begin{equation} 
\label{e:short-exact-hyperbolic-proof-parshin-main-proof}
0 \to  \pi_1(A_{b_0}, w_0) \to \pi_1(\mathcal{A}_{B_0} , w_0) \to \pi_1(B_0, b_0)\to 0
\end{equation}
\par 
The zero section of $\mathcal{A}$ induces a 
section $i_O \colon \pi_1(B_0, b_0) \to  \pi_1(\mathcal{A}_{B_0} , w_0)$ of  
  \eqref{e:short-exact-hyperbolic-proof-parshin-main-proof} 
which in turn induces a 
semi-direct product 
$ \pi_1(\mathcal{A}_{B_0} , w_0) =  \pi_1(A_{b_0}, w_0) \rtimes_{\varphi} \pi_1(B_0, b_0) $. 
Here, $\pi_1(B_0, b_0)$ acts on $\pi_1(A_{b_0}, w_0)$ by conjugation 
(see Section \ref{monodromy-action-fibre-bundle}), 
which is the monodromy action denoted by:   
\begin{equation}
\label{e:semi-product-main-hyperbolic} 
\varphi \colon \pi_1(B_0, b_0) \to \Aut(\pi_1(A_{b_0}, w_0)), \quad \alpha \mapsto \varphi_\alpha.  
\end{equation}

Let $\delta_0$ be the diameter of the analytic closure of 
$\mathcal{A}_{B_0}$ in $\mathcal{A}$ with respect to the metric $\rho$. Let $(c_{b_0, b})_{b\in B_0}$ be the collection of smooth directed paths of bounded $d$-length going from
$b_0$ to every point $b \in B_0$. Then by the compactness of the closure of $\mathcal{A}_{B_0}$, the $\rho$-lengths
of the induced collection of directed smooth paths $\{\sigma_O(c_{b_0, b})\}$ are also uniformly bounded, 
say, by a constant $\delta'_0 >0$.  
\par 
We deduce from \eqref{e:strong-uniform-abelian-proof-bound-riemannian} that the conjugacy class of the section $i_P$ of \eqref{e:short-exact-hyperbolic-proof-parshin-main-proof}   associated with $\sigma_P$ admits a representative (also denoted $i_P$) which sends the basis $(\alpha_j)_{1 \leq j \leq k}$ to some 
homotopy classes in $\pi_1(\mathcal{A}_{B_0}, w_0)$ which admit representative loops of $\rho$-lengths bounded by 
\begin{equation}
\label{e:almost-there-allez-main-proof-hyper} 
H(s) \coloneqq c^{-1}L(s+1)+2(\delta_0+\delta'_0). 
\end{equation}
\par 
The term $2(\delta_0+\delta'_0)$ corresponds to the upper bound for the conjugation induced by
the change of base points of the loops $\sigma_O(\gamma_i)$'s from $\sigma_P(b)$ to $w_0=\sigma_O(b_0)$  using a short
path living in the fiber $A_b$ of  $\rho$-length bounded by $\delta_0$ which goes from $\sigma_P(b)$ to $\sigma_O(b)$ and 
the other path $\sigma_O(c_{b_0, b})$ whose $\rho$-length is bounded by $\delta_0'$ by construction. Notice that one
cannot simply use the path $\sigma_P(c_{b_0,b})$  (and another path from $\sigma_P(b_0)$   to $w_0$) since we have
no control on its $\rho$-length. 
\par
Via the semi-direct product $ \pi_1(\mathcal{A}_{B_0} , w_0) = \pi_1(A_{b_0}, w_0) \rtimes_{\varphi} \pi_1(B_0, b_0)$, 
we can write $i_P(\alpha_j)= (\beta_j, \alpha_j)$ where $\beta_j \in \pi_1(A_{b_0}, w_0)$ for every $j \in \{1, \dots, k\}$. 
\par
As already remarked above, we can replace 
$B_0$ by $B_0 \cup \partial B_0$ without loss of generality. 
Thus, $(\mathcal{A}_{B_0}, \rho)$ can be regarded as a compact Riemannian manifold with boundary. 
Let $\pi \colon \tilde{\mathcal{A}}_0 \to \mathcal{A}_{B_0}$, 
$u \colon \R^{2n} \to A_{b_0}$, and $v \colon \Delta \to B_0$ be the universal covering 
maps. 
We have $ \tilde{\mathcal{A}}_0 \simeq \R^{2n} \times  \Delta $ 
and a commutative diagram where the composition of the top row is $\pi$: 
 \[
\begin{tikzcd}
  \tilde{\mathcal{A}}_0 \simeq \R^{2n} \times \Delta 
   \arrow[r, " u \times \Id"]   
 &  A_{b_0} \times \Delta   \arrow[r]   \arrow[d, "\mathrm{pr_2}",swap] 
 & \mathcal{A}_{B_0} \arrow[d," f_{B_0}"] 
 \\
 & \Delta   \arrow[r, "v"]  & B_0.   
\end{tikzcd}
\]
\par 
The right-most square is the pullback of the 
fiber bundle $\mathcal{A}_{B_0} \to B_0$ over 
the contractible open unit   disc $\Delta$ (in the category of differential manifolds). 
 \par
Fix a point $\tilde{w} = (\tilde{x}_0, \tilde{y}_0) \in \R^{2n} \times \Delta$ in 
the fiber $\pi^{-1}(w_0)$ above $w_0 \in \mathcal{A}_{B_0}$.   
Denote $x_0= u (\tilde{x}_0) = w_0 \in A_{b_0}$ and $y_0=v(\tilde{x}_0)=b_0$.  
\par
Let $j \in  \{1, \dots, k\}$. 
We claim that the deck transformation 
$i_P(\alpha_j)w_0=(\beta_j, \alpha_j)w_0  \in \R^{2n} \times \Delta$ 
is then simply given by 
the couple of deck transformations 
\begin{equation*}
(\varphi_{\alpha_j}(\beta_j) x_0, \alpha_j y_0)  \in \R^{2n} \times \Delta
\end{equation*}
 (cf. the monodromy action $\varphi$ in \eqref{e:semi-product-main-hyperbolic}) 
 with respect to the chosen points $\tilde{w} \in \pi^{-1}(w_0) $, $\tilde{x}_0 \in u^{-1}(x_0)$ and 
$\tilde{y}_0 \in v^{-1}(y_0)$ in the universal cover. 
Indeed, let $\gamma \colon [0,1] \to \mathcal{A}$ the path representing $(\beta_j, \alpha_j)$. 
Let $\tilde{\gamma} \colon [0,1] \to \R^{2n} \to \Delta$ be the lifting 
of $\gamma$ such that $\tilde{\gamma}(0)= (\tilde{x}_0, \tilde{y}_0)= \tilde{w}$. 
Define $\gamma' = (u\times \Id) \circ \tilde{\gamma} \colon [0,1] \to A_{b_0} \times \Delta$. 
Then $\gamma'$ is the lifting 
of $\gamma$ such that $\gamma'(0)= (w_0, \tilde{y}_0)$. 
Note that $\alpha_j$ is represented by $f_{B_0} \circ \gamma' \colon [0,1] \to B_0$. 
By the definition of the monodromy action $\varphi$ 
(via the Homotopy Lifting Property \cite[page 45]{vassiliev-topology}), 
 the homotopy class of $\gamma'$ in $\pi_1(A_{b_0}, w_0)$ 
under the projection $A_{b_0} \times \Delta \to A_{b_0}$ 
is $\varphi_{\alpha_j}(\beta_j)$. 
The claim thus follows.   
 \par
The metric $\rho$ on $\mathcal{A}_{B_0}$ 
pullbacks to  
a geodesic Riemannian metric $\tilde{\rho}$ on $ \R^{2n} \times \Delta \simeq \tilde{\mathcal{A}}_0$. 
The metric $\tilde{\rho}$ induces a 
geodesic Riemannian 
metric 
$d_j= \tilde{\rho}\vert_{\R^{2n} \times \{ \alpha_j y_0 \} }$ 
on $\R^{2n} \times \{ \alpha_j y_0 \}$
for every $j=1, \dots, k$. 
Then $u \colon \R^{2n} \to A_{b_0}$ 
makes $A_{b_0}$ into a compact geodesic Riemannian manifold 
 with the induced metric $d_j$ for every $j=1, \dots, k$. 
By construction and by \eqref{e:almost-there-allez-main-proof-hyper}, 
we find that:  
\begin{equation*}
d_j( \varphi_{\alpha_j}( \beta_j )x_0, \tilde{x}_0) \leq \tilde{\rho}( (\beta_j, \alpha_j)w_0, \tilde{w}) \leq H(s).  
\end{equation*}
\par 
Therefore, as $\pi_1(A_{b_0}, w_0)\simeq \Z^{2n}$ is an abelian group of finite rank, 
Lemma \ref{l:exponential-homotopy-northcott}.(i) and 
Proposition \ref{p:same-growth-geometric-group} 
imply that for every $j=1, \dots, k$, 
there exists a constant $m_j> 1$ independent of $s$ 
such that there are at most $m_j(H(s) +1)^{2n}$ possibilities 
for $\varphi_{\alpha_j}(\beta_j)$ and thus for $i_P(\alpha_j)= (\beta_j, \alpha_j)$ as well. 

\par
Let $m_0 = (\max_{1 \leq j \leq k} m_j)^k  >1 $. 
We deduce that 
the number of (conjugacy classes of) sections $i_P$ of \eqref{e:short-exact-hyperbolic-proof-parshin-main-proof}, 
where $\sigma_P$ is an $(S, \DD)$-integral section, is at most 
$m_0(H(s) + 1)^{2nk}$.  
 We can therefore conclude from Proposition \ref{p:homotopy-rational-abelian} that: 
\begin{equation*}
\# \left( 
I_s \text{ mod } \Tr_{K/ \C}(A)(\C)
\right)
\leq   N (s + 1)^{2nk}, 
\quad \text{ for all }s \geq 0 
\end{equation*} 
where $N = t_A m_0 (c^{-1} L+ 2 (\delta_0+\delta'_0))^{2nk}$ 
and $t_A = \#(A(K) / \Tr_{K/ \C}(A)(\C))_{tors}$. 
\end{proof}

\section[Proof of Corollary A]{Proof of Corollary \ref{c:parshin-generic-emptiness-higher-dimension}}
 \label{section:corollary-emptiness-higher-dimension}

\begin{proof}[Proof of Corollary \ref{c:parshin-generic-emptiness-higher-dimension}] For (i), 
the hypothesis $\Tr_{K / \C}(A)=0$ and Theorem \ref{t:parshin-generic-emptiness-higher-dimension} imply that 
the union of all $(S, \DD)$-integral sections of $A$, where $S \subset B$ 
with $\# S \cap B_0 \leq s$, is finite.  
Let $P_1, \dots, P_q \in A(K)$ be all such integral points where 
$q \leq m(s+1)^{r}$ for   $m=m(\mathcal{A}, B_0)$ given in   Theorem \ref{t:parshin-generic-emptiness-higher-dimension} 
and $r=2 \dim A.  \rank \pi_1(B_0)$. 
For each $i=1 , \dots, q$,   
$f(\sigma_{P_i}(B) \cap \DD) \cap B_0 = S_i$ 
for some finite subset $S_i\subset B_0$ of cardinality at most $s$. 
In particular, for every $i=1, \dots, q$, we have $\sigma_{P_i}(B) \not\subset \DD$ so that 
$\sigma_{P_i}(B) \cap \DD$ is finite as $\sigma_{P_i}$ is algebraic. 
Hence, we can define the  finite intersection loci in $B$ by 
$ E \coloneqq \cup_{i=1}^q f(\sigma_{P_i}(B) \cap \DD)   \subsetneq B$. 
Moreover, we have:  
$$
\# E \cap B_0 = \# \cup_{i=1}^q S_i \leq qs\leq m(s+1)^rs.    
$$
\par 
 We claim that $E$ verifies the point (i). 
Indeed, let $S \subset B \setminus E$ be any subset such that $\# S \cap B_0 \leq s$ and 
suppose on the contrary that $P \in A(K)$ is an $(S,\DD)$-integral point.  
Then $P=P_j$ for some $1 \leq j \leq q$ by the definition of the $P_i$'s. 
Hence, $f( \sigma_{P}(B)\cap \DD) \subset E$. 
But $P$ is $(S, \DD)$-integral so that $f( \sigma_{P}(B)\cap \DD) \subset S$. 
As $S \cap E = \varnothing$, we deduce that $f( \sigma_{P}(B)\cap \DD)= \varnothing$ and thus 
$\deg_B \sigma_P^*\DD=0$. 
This is a contradiction since $\DD$ is strictly nef by hypothesis. 
We conclude the proof of Corollary \ref{c:parshin-generic-emptiness-higher-dimension}.(i).  
\par
For (ii), 
let $E$ and $r$  be given in Corollary \ref{c:parshin-generic-emptiness-higher-dimension}.(i). 
Let $\Delta \subset B^{(s)}$, where $B^{(s)}= B^s/\mathfrak{S}_s$ is the $s$-th symmetric product, 
be the image of the $(s-1)$-dimensional closed subset 
$E \times B^{s-1} \subset B^{s}$ under the  
quotient map $p \colon B^{s} \to B^{(s)}$. 
Since $p$ is a finite morphism of algebraic schemes, 
$\Delta$ is an $(s-1)$-dimensional algebraic 
closed subset of $B_0^{(s)}$. 
Now let $[S] \in B^{(s)} \setminus \Delta$ and let $S= \supp [S] \subset B$. 
Hence $\#S \leq s$ and it follows from the construction of $\Delta$ that $S \cap E= \varnothing$. 
\par 
We claim that none of the $P_i$'s is an $(S,\DD)$-integral point. 
Indeed, if $P_i$ is $(S,\DD)$-integral then $\sigma_{P_i}(B) \cap \DD$ 
is nonempty since $\DD$ is strictly nef by hypothesis.
\par 
On the other hand, $f(\sigma_{P_i}(B) \cap \DD ) \subset S \cap E= \varnothing$ by 
the definition of $E$. We arrive at a contradiction and the claim is proved. 
 The proof of Corollary \ref{c:parshin-generic-emptiness-higher-dimension} is thus completed. 
 \end{proof}
 
\begin{remark}
Corollary \ref{c:parshin-generic-emptiness-higher-dimension}.(ii) can be made  quantitative. 
Let $B_0^{(s)}= B_0^s/\mathfrak{S}_s \subset B^{(s)}$ be the $s$-th symmetric product of $B_0$. 
Let $E_0= E \cap B_0 \subset B$.  
Let $\Delta_0 = \Delta \cap B_0^{(s)}$ 
then $\Delta_0$ the union of  
$\# E_0 \leq ms(s+1)^r$ closed subspaces   
$E_0 \times B_0^{s-1} \subset B_0^{s}$. 
Then  
$\Delta_0$ is an $(s-1)$-dimensional algebraic 
closed subspace of $B_0^{(s)}$. 
Let $[S_0] \in B_0^{(s)} \setminus \Delta_0$ and let $S_0= \supp [S_0] \subset B_0$. 
Then $\#S_0 \leq s$ and it follows that $S_0 \cap E_0= \varnothing$. 
For every $P \in A(K)$, $\sigma_P(B) \cap \DD \neq \varnothing$ as $\DD$ horizontally strictly nef. 
Hence, none of the $P_i$'s is an $(S_0,\DD)$-integral point by the definition of $E$. 
It follows that there 
is no $(S_0,\DD)$-integral points of $\mathcal{A}$ whenever $[S_0] \in B_0^{(s)} \setminus \Delta_0$. 
\end{remark}

\section[Proof of Theorem E]{Proof of Theorem \ref{p:uncountable-limit-point}}
\label{s:uncountable-limit-point}

\begin{proof}[Proof of Theorem \ref{p:uncountable-limit-point}] 
By the Lang-N\' eron theorem, $A(K)$ is a finitely generated abelian group 
since $\Tr_{K /\C}(\mathcal{A}_K)=0$. 
In particular, the subset $R \subset A(K)$ is at most countable. 
On the other hand, for every $P \in R$, the set $\sigma_P(B) \cap \DD$ 
is finite since $P \notin D$. 
It follows that the set $I(R, \DD)$ is at most countable. 
Let $V \coloneqq \{ b \in B \colon \DD_b \text{ is not hyperbolic}  \} \subset B$. 
Then $V$ is an analytically closed subset of $B$  (cf. \cite{brody-78}) which is at most countable 
by Lemma \ref{l:hilbert-dedekind-countabe} since $V \subset Z(\mathcal{A}, \DD)$ by 
Theorem \ref{t:green-hyperbolic-subvariety-abelian} where $Z(\mathcal{A}, \DD)$ 
is defined in Lemma \ref{l:hilbert-dedekind-countabe}. 
\par
For (i), we suppose on the contrary that $I(R, \DD)$ is analytically closed in $B$. 
We fix an arbitrary finite
disjoint union $W$ of closed discs centered at the points of $T$ (points of bad reduction of
the family $\mathcal{A}\to B$).  Then the same argument at the beginning of the proof of Theorem~\ref{t:parshin-generic-emptiness-higher-dimension} implies
that we can enlarge the discs in $W$ to contain $V\cup I(R,D)$ so that they are still closed,
disjoint and contain points of $T$  separately. 
\par 
Then Theorem \ref{t:parshin-generic-emptiness-higher-dimension} applied for 
$B_0= B \setminus W$ tells us that 
$A(K)$ and thus $R \subset A(K)$ contains only finitely many $(W, \DD)$-integral points, 
since $\Tr_{K/ \C} (A)=0$. 
On the other hand, as $I(R, \DD) \subset W$, 
every $P \in R$ is an $(W, \DD)$-integral point. 
It follows that $R$ must be a finite subset, which is a contradiction to the assumption that 
$R$ is infinite. 
Hence, $I(R, \DD)$ is not analytically closed in $B$ and in particular, it must be infinite.  
The proof of (i) is thus completed. 
Observe that the same argument proves also  (iii).  
\par 
The set of limit points $I(R, \DD)_\infty$ is closed in the analytic topology.  
For (ii), suppose also on the contrary that $I(R, \DD)_{\infty}$ is countable. 
Then~(i) implies that $\overline{I(R, \DD)}=I(R, \DD) \cup I(R, \DD)_{\infty}
$ 
is a countable and analytically closed subset of $B$. 
Therefore, the same argument as above shows that 
$R$ is a finite subset of $A(K)$ which is again a contradiction. 
This proves (ii). 
\end{proof} 

\section{Some generalizations} 

By using Theorem~\ref{t:collar-moving-discs-general}, 
the proof of Theorem~\ref{t:parshin-generic-emptiness-higher-dimension} can be easily modified, \emph{mutatis mutandis},
to obtain the following stronger result in which we allow  furthermore the intersection of integral
sections with $D$ to happen over some bounded moving discs in $B_0$: 

\begin{theorem}
\label{t:main-generalization-of-A}
 In Setting (P),  
 let $W \supset T$ be any finite union of disjoint closed discs in 
$B$ such that distinct points of $T$ are contained in distinct discs. Let $B_0 = B \setminus W$ and 
$p\in \N$. Then there exist $m, r > 0$  such that for all $s >0$ and  $I_s^p \coloneqq \cup_{Z}\{ P \in A(K) \colon \# f(\sigma_P (B_0 \setminus Z) \cap \DD_z) \leq s \}$ with the union  over all unions $Z$ of $p$ discs of $d$-radius $r$ in  $B$, one has 
\[
\pushQED{\qed}
\# I_s^p  \text{ mod } \Tr_{K/\C} (A) (\C)
 \leq m(s+1)^{2 \dim A . \rank \pi_1(B_0)}.
\qedhere
\popQED
\]  
\end{theorem}

As an application, we obtain the following generalization of Corollary~\ref{t:lang-vojta-corollary}. The proof 
is similar to that of Corollary~\ref{t:lang-vojta-corollary} but we use Theorem~\ref{t:main-generalization-of-A}  instead of Theorem~\ref{t:parshin-generic-emptiness-higher-dimension}. 

\begin{corollary}
\label{t:lang-vojta-corollary-generalized-1}
In Setting (P), let $W \supset T$ be a finite union of disjoint closed discs
in $B$ such that distinct points of $T$  are contained in distinct discs. Let $B_0=B \setminus W$ and  
$p, s \in \N$. Then there exist $r, M = M(\mathcal{A}, \mathcal{D}, B_0, p,s) >0$ such that for every union $Z$ of $p$ discs of $d$-radius $r$ in
$B$ and every section
$\sigma \colon B \to \mathcal{A}$  with  $\#f(\sigma(B_0\setminus Z) \cap \mathcal{D}) \leq s$, one has $\deg_B \sigma^*\mathcal{D} < M$. 
\qed 
\end{corollary}

\appendix 

\section{Geometry of the fundamental groups} 
\label{s:geometry-fundamental-groups-main}
We concisely collect standard results on the geometry of 
the fundamental groups which are necessary for 
the proof of Theorem \ref{t:parshin-generic-emptiness-higher-dimension}. 
For the convenience of future works, we give 
the main statements Proposition \ref{p:same-growth-geometric-group} 
and Lemma \ref{l:exponential-homotopy-northcott} which are 
slightly more general than what we actually need. 
We first recall the following notion of quasi-isometry introduced by Gromov: 
 
\begin{definition}
[Gromov] 
Let $X, Y$ be metric spaces. 
Let $L, A >0$. We say that a map $f \colon X \to Y$ (not necessarily continuous) 
is $(L,A)$-quasi-isometry if: 
\begin{enumerate}
\item
(equivalence)  $ \frac{1}{L} d(x,y) -A  \leq d(f(x),f(y)) \leq Ld(x,y) +A$,  for all  $x, y \in X$;
\item
(quasi-surjective) there exists $R >0$ such that 
$f(X) \cap B(y,R) \neq \varnothing$ for every $y \in Y$  where $B(y,R)= \{z \in Y \colon d(z,y)<R\}$ is the ball of radius $R$ centered at $y$.   
\end{enumerate}
   
\end{definition}

Note that being \emph{quasi-isometric to} is an equivalence
relation. 

\begin{definition}
 Let $G$ be a finitely generated group and let $S$ be a finite generating subset. 
 The function $d_S  \colon  G \times G \to  \N$ given by
$$
(g, h) \mapsto 
\left\{
	\begin{array}{ll}
		0  & \mbox{if } g = h \\
		\min \{ n \colon g^{-1} h = s_1 \cdots s_n \text{ for some }s_1, \dots s_n \in S \cup S^{-1} \} & \mbox{otherwise } 
	\end{array}
\right.
$$
defines a metric on $G$ called the \emph{word metric} associated with the generating set $S$. We denote also 
$B((G, d_S), R) = \{g \in G \colon d_S(g,1_G) <R\}$ or simply $B(G, R)$ if $d_S$ is fixed.
\end{definition}

The following sufficient condition of quasi-isometry is due to Milnor and Svarc. 
It is sometimes called the Fundamental Lemma of Geometric
Group Theory \cite[Proposition~I.8.19]{bridson-haefliger}. 

\begin{theorem}
[Milnor-Svarc] 
\label{t:milnor-svarc}
Let $(X,d)$ be a geodesic metric space and $G$ a finitely generated group acting on $X$ by isometries, i.e., 
$ d(gx,  gy)= d(x, y) $. Let $x_0\in X$  and assume:
\begin{enumerate} [\rm (i)]
\item
(cobounded action) there exists $R > 0$ such that  translates of $B(x_0, R)$
cover $X$, i.e., 
$$
X= \cup_{g \in G} (gB(x_0, R)) = \cup_{g \in G} B(gx_0, R); 
$$ 
\item
(metrically proper action) for any $r > 0$,  
$
\{ g \in G \colon B(x_0,  r) \cap g B(x_0, r) \neq \varnothing\}$, is finite. 
\end{enumerate} 
\par 
Then $G$ is finitely generated and the map $p \colon G \to  X$,  $p(g) = gx_0$, is a quasi-isometry where $G$ 
is equipped with an arbitrary word metric associated with a finite system of generators. \qed
\end{theorem}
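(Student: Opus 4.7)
The plan is to exhibit an explicit finite generating set for $G$ coming from the metric data, and then to show that the orbit map $p(g) = gx_0$ is a quasi-isometry by proving comparable upper and lower bounds between $d_S$ and $d$. The candidate generating set is
\begin{equation*}
S \coloneqq \{ g \in G \setminus \{1_G\} \colon d(x_0, gx_0) \leq 2R + 1 \},
\end{equation*}
which is finite by hypothesis (ii) applied to $r = 2R+1$. Note also that $S$ is symmetric. Let $C \coloneqq \max_{s \in S} d(x_0, sx_0) \leq 2R+1$, finite since $S$ is finite.

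First I would show that $S$ generates $G$. Given $g \in G$, by hypothesis (i) one may choose a geodesic segment from $x_0$ to $gx_0$ in $X$ and subdivide it into consecutive points $x_0 = y_0, y_1, \dots, y_n = gx_0$ with $d(y_{i-1}, y_i) \leq 1$ and $n \leq d(x_0, gx_0) + 1$. By coboundedness, for each $i$ pick $g_i \in G$ with $d(y_i, g_i x_0) \leq R$, taking $g_0 = 1_G$ and $g_n = g$. Setting $s_i \coloneqq g_{i-1}^{-1} g_i$, the isometric action gives
\begin{equation*}
d(x_0, s_i x_0) = d(g_{i-1} x_0, g_i x_0) \leq d(g_{i-1}x_0, y_{i-1}) + d(y_{i-1}, y_i) + d(y_i, g_i x_0) \leq 2R + 1,
\end{equation*}
so $s_i \in S \cup \{1_G\}$. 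Thus $g = s_1 \cdots s_n$, proving that $S$ generates $G$ and moreover
\begin{equation}
\label{e:upper-ds}
d_S(1_G, g) \leq d(x_0, gx_0) + 1.
\end{equation}

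Next I would verify the two quasi-isometry inequalities for $p$. Quasi-surjectivity follows directly from (i): every $y \in X$ lies within distance $R$ of some $gx_0 = p(g)$. For the Lipschitz upper bound on $d$, any minimal $S$-word $g = s_1 \cdots s_m$ with $m = d_S(1_G, g)$ yields, by the triangle inequality and the isometric action,
\begin{equation*}
d(x_0, gx_0) \leq \sum_{i=1}^m d(s_1\cdots s_{i-1} x_0, s_1 \cdots s_i x_0) = \sum_{i=1}^m d(x_0, s_i x_0) \leq C m = C \, d_S(1_G, g).
\end{equation*}
Combining this with \eqref{e:upper-ds} applied to $h^{-1} g$ (and using left $G$-invariance of both $d_S$ and of $d$ on the orbit) gives
\begin{equation*}
d(p(g), p(h)) - 1 \leq d_S(g, h) \leq C^{-1}\cdot \text{(wait, wrong direction)}
\end{equation*}
so that, reorganizing with $L \coloneqq \max(C, 1)$ and $A \coloneqq 1$, one obtains $L^{-1} d_S(g,h) - A \leq d(p(g), p(h)) \leq L \, d_S(g,h) + A$, which is the required $(L, A)$-quasi-isometry estimate.

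The only delicate point is the construction of the word $g = s_1 \cdots s_n$: one must ensure that the subdivision of the geodesic has steps short enough that the associated group elements land in the prescribed finite set $S$, and that the total count $n$ is controlled linearly by $d(x_0, gx_0)$. Everything else is a book-keeping exercise with the triangle inequality and $G$-invariance of the metric. No auxiliary results beyond the two hypotheses and basic metric geometry are needed.
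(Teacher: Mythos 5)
Your argument is correct and is exactly the standard \v{S}varc--Milnor argument: the paper itself gives no proof, deferring to Bridson--Haefliger (Proposition I.8.19), and your construction of the generating set $S=\{g\neq 1_G:\ d(x_0,gx_0)\leq 2R+1\}$, the telescoping word $g=s_1\cdots s_n$ along a subdivided geodesic, and the two Lipschitz-type estimates are precisely the ingredients of that reference's proof. Two small points of hygiene: the displayed line containing ``(wait, wrong direction)'' is garbled and should simply be deleted, since the two inequalities you already have, namely $d_S(g,h)\leq d(p(g),p(h))+1$ (apply \eqref{e:upper-ds} to $h^{-1}g$ and use $d(x_0,h^{-1}gx_0)=d(hx_0,gx_0)$) and $d(p(g),p(h))\leq C\,d_S(g,h)$, immediately give the $(L,A)$-estimate with $L=\max(C,1)$, $A=1$; and since the theorem asserts the conclusion for an \emph{arbitrary} finite generating set, you should add the one-line remark that any two word metrics on $G$ coming from finite generating sets are bi-Lipschitz equivalent (each generator of one set is a word of bounded length in the other), so the quasi-isometry established for $S$ transfers to any finite generating set. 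Also, the existence of the geodesic from $x_0$ to $gx_0$ comes from $X$ being a geodesic space, not from hypothesis (i); (i) is only used to choose the $g_i$ and for quasi-surjectivity.
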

 
\begin{lemma}
\label{l:at-most-exponental-grp}
Every finitely generated group $G$ has at most exponential growth, i.e., 
for every finite generating subset $S \subset G$, there exists $N >1$ such that 
for every $R>0$, we have 
\begin{equation}
\label{e:at-most-exponential}
\# B((G, d_S), R) \leq N^R.  
\end{equation}

\end{lemma}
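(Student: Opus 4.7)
The plan is a straightforward word-counting argument: every element of $B((G,d_S), R)$ arises as an evaluation of a word of bounded length in the alphabet $S\cup S^{-1}$, so its cardinality is dominated by the number of such words, which grows exponentially in the length bound.

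\textbf{Key steps.} First, I would set $k \coloneqq \#(S \cup S^{-1})$, noting $k \leq 2\#S$ is finite. By the definition of $d_S$, every $g \in G$ with $d_S(g, 1_G) < R$ can be written as a product $g = s_1 \cdots s_n$ with $s_i \in S \cup S^{-1}$ and $n \leq \lceil R \rceil - 1 \leq R$ (with the convention that $g = 1_G$ corresponds to the empty word, $n=0$). Second, the evaluation map
\begin{equation*}
(S \cup S^{-1})^{\leq R} \longrightarrow G, \qquad (s_1, \dots, s_n) \longmapsto s_1 \cdots s_n
\end{equation*}
is surjective onto $B((G, d_S), R)$, so
\begin{equation*}
\# B((G, d_S), R) \leq \sum_{n=0}^{\lfloor R \rfloor} k^n \leq (\lfloor R \rfloor + 1)\, k^{\lfloor R \rfloor}.
\end{equation*}
Third, I would choose $N>1$ large enough so that the right-hand side is bounded by $N^R$ for all $R>0$. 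For example, if $k \geq 2$, taking $N = k^2$ suffices whenever $R \geq 1$ (since $k^2 \cdot k^{R-1} = k^{R+1} \geq (R+1)k^R$ fails only if $k^{R+1} < (R+1)k^R$, but a direct check shows a slightly larger $N$, say $N = 4k$, works uniformly); for $0 < R < 1$ the ball consists only of $\{1_G\}$, so the bound is trivial. The degenerate cases $k = 0$ (so $G$ is trivial) or $k = 1$ (so $G$ is a quotient of $\Z$) are immediate.

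\textbf{Main obstacle.} There is essentially no real obstacle here; the only mild nuisance is to pick the constant $N$ so that the inequality $\eqref{e:at-most-exponential}$ holds for \emph{all} $R > 0$ simultaneously (including small and non-integer $R$), rather than just asymptotically. This is a cosmetic issue fixed by choosing $N$ generously in terms of $k$; it does not affect the conceptual content, which is simply that the word-length ball of radius $R$ injects into the set of words of length $\leq R$ in a $k$-letter alphabet.
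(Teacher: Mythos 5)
Your proof is correct and is essentially the same word-counting argument as the paper's: the paper simply pads words with the identity element, bounding $\# B((G,d_S),R)$ by $(\#S+\#S^{-1}+1)^{\floor{R}} \leq (2\#S+1)^{R}$, which avoids the geometric-series sum and the need to absorb the polynomial factor $(\floor{R}+1)$ into a larger $N$. Apart from this cosmetic difference in bookkeeping (and the paper's cleaner explicit constant $N=2\#S+1$), the two arguments coincide.
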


\begin{proof}
We will show that every $N \geq 2s +1$ satisfies the 
inequality \eqref{e:at-most-exponential}, where $s= \#S$. 
Moreover, the equality holds only if $G$ is a free group of finite rank. 
Indeed, by the definition of the word metric $d_S$,  we have for every $R >0$ that  
$ B((G, d_S), R) \subset \left\{ \prod_{j=1}^{\floor{R}} s_j \colon s_j \in S \cup S^{-1} \cup \{1_G\} \right\}
$ where $\floor{R}$ denotes the largest integer smaller than or equal to $R$. 
It follows that 
\begin{align*}
\#B((G, d_S), R) & \leq \# \left\{ \prod_{j=1}^{\floor{R} } s_j \colon s_j \in S \cup S^{-1} \cup \{1_G\} \right\} 
& \leq (\# S + \# S^{-1} + 1)^{\floor{R}} 
& \leq (2s+1)^R
\end{align*}
and the lemma is proved by taking $N= 2s+1$. 
\end{proof}

We mention here the famous theorem of Gromov classifying   groups of polynomial
growth.

\begin{theorem}
[Gromov] 
\label{t:gromov-polynomial-growth}
Let $G$ be a finitely generated group. 
Fix a finite generating system $S \subset G$ and consider the 
corresponding word metric $d_S$.  
Let $n(L)$ be the number of elements $g \in G$ such that 
$d_S(g, 1_G) \leq L$. 
Then there exist $a, r > 0$ such that $ 
n(L) \leq a(L+1)^r$ for all $L >0$ 
if and only if $G$ is virtually nilpotent, i.e., 
$G$ admits a finite index subgroup $H$ which is nilpotent, 
i.e., $H_m=0$ for some $m\geq 0$ where 
$H_{k+1} \coloneqq [H_k, H]$, $H_0=H$.  
\end{theorem}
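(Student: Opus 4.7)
The plan separates into the two directions. For the easy implication, virtually nilpotent $\Rightarrow$ polynomial growth (Wolf, Bass, Guivarc'h), I would first reduce to the nilpotent case by noting that if $H \leq G$ has finite index and is nilpotent, then choosing a finite transversal together with any finite generating set of $H$ yields a finite generating set of $G$ for which the inclusion $(H,d_{S_H}) \hookrightarrow (G,d_S)$ is a quasi-isometry (this is an instance of Theorem \ref{t:milnor-svarc}, applied to the left-multiplication action of $H$ on $G$). Polynomial growth of $G$ is therefore equivalent to polynomial growth of $H$. For a nilpotent group $H$ with lower central series $H = H^{(0)} \supseteq H^{(1)} \supseteq \cdots \supseteq H^{(m)} = 1$, I would induct on $m$: the commutator identity $[xy, z] \equiv [x,z][y,z] \pmod{H^{(k+1)}}$ shows that words of length $\leq L$ in generators of $H^{(k)}/H^{(k+1)}$ can be expressed with $O(L^{k+1})$ bounded commutator expressions, yielding the Bass--Guivarc'h bound $n(L) \leq a(L+1)^r$ with $r = \sum_{k \geq 1} k \cdot \mathrm{rank}_{\Z}(H^{(k-1)}/H^{(k)})$.

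The converse is Gromov's deep theorem. The plan is a strong induction on the growth degree $r$. The base case $r = 0$ forces $G$ to be finite, hence trivially nilpotent. For the inductive step, the goal is to exhibit a surjective homomorphism from a finite-index subgroup $G' \leq G$ onto a nontrivial connected Lie group $L$; then the kernel will inherit polynomial growth of strictly smaller degree (via a Milnor-type estimate controlling the growth of extensions), and the induction hypothesis combined with a short structural argument on $L$ closes the proof.

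To construct this homomorphism, I would consider the sequence of rescaled pointed metric spaces $(G, \tfrac{1}{\lambda_n} d_S, 1_G)$ for $\lambda_n \to \infty$. The polynomial growth hypothesis, combined with Lemma \ref{l:at-most-exponental-grp} applied to balls, provides the uniform doubling property needed to extract (by Gromov's compactness theorem) a Gromov--Hausdorff limit $(X, d_\infty, x_0)$. The space $X$ is complete, locally compact, connected, and homogeneous under the natural action of a topological group $\mathrm{Isom}(X)$. The decisive analytic input is the Montgomery--Zippin--Yamabe solution of Hilbert's fifth problem: such an $X$, once one checks it has finite topological dimension, must be isometric to a connected Lie group with a left-invariant metric. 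Polynomial growth is used precisely here to bound the topological dimension via covering/packing arguments on rescaled balls. The homomorphism $G' \to L$ then comes from identifying a finite-index subgroup acting almost by translations on $X \cong L$.

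The hard part will be, on the one hand, verifying the hypotheses of Montgomery--Zippin (finite dimensionality and the no-small-subgroups property of the limiting isometry group) and, on the other hand, making sure the homomorphism produced is nontrivial and has kernel of strictly smaller growth degree, which requires a delicate argument handling the possibility that all short loops in $G$ collapse in the limit. An alternative route, following Kleiner and Tao--Shalom, replaces the appeal to Hilbert's fifth problem by constructing nonconstant Lipschitz harmonic functions on $G$ using a reverse Poincar\'e inequality valid under polynomial growth; this yields a nontrivial homomorphism $G' \to \R$ more directly, but the overall inductive skeleton remains the same.
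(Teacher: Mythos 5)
The paper does not prove Theorem \ref{t:gromov-polynomial-growth} at all: it is quoted as a deep external result and the ``proof'' is a citation of Gromov's original article, exactly as one cites Montgomery--Zippin or the Lang--N\'eron theorem elsewhere in the text. So there is no internal argument to compare yours against; what you have written is an outline of the proof in the cited reference (and of its modern descendants). As an outline it is faithful to the known strategy: the easy direction via reduction to a nilpotent finite-index subgroup and the Bass--Guivarc'h degree formula, and the hard direction by induction on the growth degree, rescaled limits, the Montgomery--Zippin--Yamabe structure theory to recognize the limit as a Lie group, a nontrivial homomorphism from a finite-index subgroup, and a Milnor-type estimate to drop the degree of the kernel (or, alternatively, Kleiner's harmonic-function route). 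Two caveats. First, all of the genuinely hard content is named rather than carried out --- the pigeonhole choice of scales giving the doubling/precompactness, finite-dimensionality and no-small-subgroups for the limit isometry group, nontriviality of the limiting action, and the reduction from a Lie target to a surjection onto $\Z$ via Jordan/Tits-type arguments --- so this is a plan, not a proof, which is unavoidable for a theorem of this depth and is precisely why the paper black-boxes it. Second, your appeal to Lemma \ref{l:at-most-exponental-grp} is misplaced: the exponential upper bound on growth plays no role in extracting the Gromov--Hausdorff limit; the uniform covering bounds come from the polynomial growth hypothesis itself, applied along a suitable subsequence of radii. Within the logic of the present paper, nothing more than the statement (used through Lemma \ref{l:exponential-homotopy-northcott}(i), and in fact only its elementary abelian case) is needed, so citing Gromov is the appropriate level of detail.
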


\begin{proof}
See \cite[Main Theorem]{grom-81}. 
\end{proof}

We recall without proof the following standard theorem. 

\begin{theorem}
 \label{t:pi-1-finite-generation}
Let $M$ be a compact 
Hausdorff space which admits a finite  
cover by open simply connected sets, and which is locally path connected (i.e., there is a
base for the topology consisting of path connected sets). 
Then $\pi_1(M)$ is 
finitely generated.
In particular, this holds for all compact semi-locally simply connected spaces and thus for
all compact Riemannian manifolds with or without boundary.
\end{theorem}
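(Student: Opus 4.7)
The plan is to combine the Seifert--van Kampen theorem applied to the finite cover with a Lebesgue number subdivision of loops; triviality of each $\pi_1(U_i)$ should then force $\pi_1(M,x_0)$ to be generated by a finite family of transition loops between chosen basepoints of the $U_i$'s.

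First, I would exploit local path connectedness to replace each $U_i$ by its (open) path components, so that each $U_i$ becomes path connected and remains simply connected while the cover stays finite. Next I would use the fact that a compact Hausdorff space carries a unique compatible uniform structure, so the finite cover admits a Lebesgue number in the uniform sense; hence every loop $\gamma \colon [0,1] \to M$ based at a chosen $x_0$ admits a subdivision $0 = t_0 < \cdots < t_N = 1$ with $\gamma([t_{k-1}, t_k]) \subset U_{i_k}$ for some index $i_k$. Fixing a basepoint $p_i \in U_i$ together with a path $\mu_i$ in $M$ from $x_0$ to $p_i$, and choosing for every nonempty intersection $U_i \cap U_j$ a representative point and connecting paths in $U_i$ and in $U_j$ (one such choice per path component), simple connectedness of each $U_{i_k}$ would permit the segment $\gamma|_{[t_{k-1},t_k]}$ to be deformed rel endpoints into a canonical concatenation factoring through $p_{i_k}$. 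This represents $[\gamma]$ as a word in the loops of the form $\mu_i \cdot (\text{segment in } U_i \cap U_j) \cdot \mu_j^{-1}$, which is a finite generating set as soon as the ingredients are finite in number.

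The main obstacle is to ensure that only finitely many path components of each $U_i \cap U_j$ actually contribute. In the case of principal interest, namely a compact Riemannian manifold with or without boundary, I would handle this by choosing the cover to consist of geodesically convex open balls: pairwise (and triple) intersections are then themselves geodesically convex and hence contractible, so exactly one transition loop is needed per pair $(i,j)$, and Seifert--van Kampen identifies $\pi_1(M,x_0)$ with the fundamental group of the finite nerve of the cover, which is patently finitely generated (by its edges, modulo the triangle relations coming from triple intersections). For a general compact, locally path connected, semi-locally simply connected space, I would instead pass to the universal cover $\widetilde{M}\to M$ (which exists since simple connectedness of each $U_i$ certainly implies semi-local simple connectedness of $M$): the deck group $\pi_1(M)$ acts freely, properly, and cocompactly on $\widetilde{M}$, a relatively compact fundamental domain meets only finitely many translates of a fixed lift of the cover, and the resulting finitely many deck transformations generate $\pi_1(M)$, exactly in the spirit of the Milnor--\v{S}varc argument recalled as Theorem~\ref{t:milnor-svarc}.
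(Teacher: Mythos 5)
The paper itself offers no argument here: Theorem \ref{t:pi-1-finite-generation} is explicitly ``recalled without proof'' as a standard fact, so your sketch can only be judged on its own merits. Your two routes are the standard ones, and the first (a finite good cover by geodesically convex balls, nerve plus Seifert--van Kampen) is fine for closed Riemannian manifolds; note only that convex balls are not available at the boundary, so manifolds with boundary should be routed through your second argument (or a collar/doubling trick).

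In the general case, however, the paragraph that is meant to dispose of the obstacle you correctly identified does not discharge it; it relocates it. Writing that the deck group ``acts freely, properly, and cocompactly'' assumes strong properness, i.e.\ that $\{g:\ g\overline{V}\cap \overline{V}\neq\varnothing\}$ is finite for a suitable relatively compact $V\subset\widetilde{M}$ --- but a deck action is, by definition, only properly discontinuous in the weak sense, and this finiteness is essentially the assertion to be proved. It does hold, but needs an argument: if infinitely many distinct $g_n$ satisfied $g_nK\cap K\neq\varnothing$ for a compact $K$, pick $a_n\in K$ with $g_na_n\in K$, take a cluster point $(a,b)$ of $(a_n,g_na_n)$ in $K\times K$, use Hausdorffness of $M$ to get $p(a)=p(b)$, choose a connected evenly covered neighborhood $U$ of this point (local path connectedness), and note that at most one deck transformation carries the sheet through $a$ to the sheet through $b$ (freeness of the deck action of the universal cover), forcing infinitely many $g_n$ to coincide. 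Second, the ``relatively compact fundamental domain'' must be constructed: the closure of a chosen sheet over one of the given simply connected sets $U_i$ need not be compact (a thin simply connected open set spiralling infinitely often around a closed annulus is evenly covered, yet the closure of a sheet in the strip is unbounded). The fix is to shrink the finite cover to closed sets $C_i\subset U_i$ using normality of the compact Hausdorff $M$, lift each $C_i$ into a chosen sheet, and thicken slightly to an open $V$ with compact closure; then $GV=\widetilde{M}$, the set $S=\{g: gV\cap V\neq\varnothing\}$ is finite by the properness just proved, and the elementary connectedness argument (the algebraic half of Theorem \ref{t:milnor-svarc}, which needs no metric) shows $S$ generates $\pi_1(M)$. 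With these two steps made explicit your proof is complete; as written, they are exactly where the compactness and Hausdorff hypotheses do their work and cannot be taken for granted.
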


\begin{definition}
[Equivalence of growth functions] 
Two increasing functions $f, g  \colon  \R_+  \to   \R_+$  are said to have the \emph{same order of growth} 
if there exist constants $a, c > 0$ such that for all $r >0$, we have  
$
f(r) \leq c g(ar)$ and  $g(r) \leq cf(ar)$. 
\end{definition}

Hence, if $f, g$ have the same order of growth, then 
$f$ is bounded from above (resp. from below) by a polynomial of degree $m$ if and 
only if so is $g$. 
The main statement is the following. 

\begin{proposition}
\label{p:same-growth-geometric-group}
Let $(M, d)$ be a compact connected Riemannian manifold with 
boundary. Let $\pi \colon \tilde{M} \to M$ be the universal cover of $M$.    
Then for any point $x_0 \in M$ not lying on the boundary, the map
$p \colon \pi_1(M, x_0) \to \pi^{-1}(x_0)$ 
given by $g  \mapsto  g x_0$ is a bijective quasi-isometry. 
In particular, the growths of $\pi_1(M, x_0)$ 
are the same whether calculated with respect to the induced geometric
norm by $d$ on $\pi_1^{-1}(x_0)$ or with respect to the algebraic word norm associated with an arbitrary
finite system of generators of $\pi_1(M, x_0)$. 
\end{proposition}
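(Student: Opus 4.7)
The statement is designed to be a direct application of the Milnor--\v{S}varc Lemma (Theorem \ref{t:milnor-svarc}), so the strategy is to package the covering-space setup into that framework and then transport the quasi-isometry statement from $\tilde{M}$ down to the discrete fiber $\pi^{-1}(x_0)$. Fix a lift $\tilde{x}_0 \in \pi^{-1}(x_0)$, equip $\tilde{M}$ with the lifted Riemannian metric $\tilde{d}$ (so $\pi$ is a local isometry), and view $\tilde{M}$ as a geodesic metric space with the intrinsic path distance. The deck transformation group is canonically identified with $\pi_1(M, x_0)$ and acts on $(\tilde{M}, \tilde{d})$ by isometries. The map to be studied is then the orbit map $p\colon g \mapsto g \cdot \tilde{x}_0$, which lands bijectively on $\pi^{-1}(x_0)$ since the deck action on any fiber is simply transitive.

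\textbf{Verification of the Milnor--\v{S}varc hypotheses.} First I would check coboundedness: since $M$ is compact, a standard construction (e.g. gluing finitely many precompact evenly covered charts, or using a Dirichlet-type fundamental domain around $\tilde{x}_0$) produces a compact subset $F \subset \tilde{M}$ with $\pi_1(M,x_0)\cdot F = \tilde{M}$. Taking $R = \mathrm{diam}_{\tilde{d}}(F) + 1$, any point of $\tilde{M}$ lies within $\tilde{d}$-distance $R$ of some translate of $\tilde{x}_0$, giving cobounded action. Next I would verify metric properness: for any $r>0$, every deck transformation $g$ with $B(\tilde{x}_0,r) \cap gB(\tilde{x}_0,r) \neq \varnothing$ satisfies $\tilde{d}(\tilde{x}_0, g\tilde{x}_0) \leq 2r$, so $g\tilde{x}_0$ lies in the intersection of $\pi^{-1}(x_0)$ with the closed ball of radius $2r$ around $\tilde{x}_0$. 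Since $\pi$ is a covering map and $x_0$ is an interior point, $\pi^{-1}(x_0)$ is discrete in $\tilde{M}$; intersected with a compact ball it is finite, yielding the desired finiteness. These two conditions together with Theorem~\ref{t:milnor-svarc} immediately imply that $\pi_1(M,x_0)$ is finitely generated (recovering Theorem~\ref{t:pi-1-finite-generation} in this context) and that $p$ is a quasi-isometry from $(\pi_1(M,x_0), d_S)$ to $(\tilde{M}, \tilde{d})$ for any finite generating set $S$.

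\textbf{Passing to the fiber and concluding about growth.} To get a quasi-isometry onto $\pi^{-1}(x_0)$ with the \emph{induced} geometric norm, I would argue that $p(\pi_1(M,x_0)) = \pi^{-1}(x_0)$ is a net in $\tilde{M}$ by coboundedness, so the inclusion $(\pi^{-1}(x_0), \tilde{d}|_{\pi^{-1}(x_0)}) \hookrightarrow (\tilde{M}, \tilde{d})$ is itself a quasi-isometry. Composing, $p$ becomes a bijective quasi-isometry between $(\pi_1(M,x_0), d_S)$ and $(\pi^{-1}(x_0), \tilde{d}|_{\pi^{-1}(x_0)})$. The final growth statement is a formality: quasi-isometric metric spaces have growth functions of the same order, and all word metrics on a fixed finitely generated group are bi-Lipschitz equivalent, so the growth type is independent of the presentation used.

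\textbf{Main obstacle.} The genuinely substantive step is the proper discontinuity/local discreteness of $\pi^{-1}(x_0)$ in $\tilde{M}$, which is where the hypothesis $x_0 \notin \partial M$ is used: it ensures that $x_0$ admits an evenly covered neighborhood in the usual sense, so that the deck group acts freely and properly discontinuously on a neighborhood of $\tilde{x}_0$. The existence of a compact fundamental domain is the other point requiring some care given the boundary, but it follows from compactness of $M$ by a routine finite-cover argument. Everything else is a transparent bookkeeping of the Milnor--\v{S}varc conclusion.
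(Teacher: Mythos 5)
Your proposal is correct and follows essentially the same route as the paper: both invoke the Milnor--\v{S}varc theorem to get a quasi-isometry from $\pi_1(M,x_0)$ with a word metric to $(\tilde{M},\tilde{d})$, then observe that the orbit map restricted to the fibre $\pi^{-1}(x_0)$ with the restricted metric remains a bijective quasi-isometry, giving the comparison of growth. The only difference is one of detail: you verify the coboundedness and properness hypotheses explicitly (and note the discreteness of the fibre), whereas the paper cites Milnor--\v{S}varc and the finite generation statement as black boxes.
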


\begin{proof}
By Theorem \ref{t:pi-1-finite-generation}, $\pi_1(M)$ 
is a  finitely generated group. 
Note that $M$ is also a geodesic metric space. 
Fix $\tilde{x}_0 \in \pi^{-1}(x_0)$. 
Theorem \ref{t:milnor-svarc} 
implies that the deck transformation action of   $\pi_1(M, x_0)$ 
on the universal cover 
$\tilde{M}$  
induces a quasi-isometry between ($\pi_1(M, x_0), d_{word})$ 
and $(\tilde{M}, \tilde{d})$, say, an $(L,A)$-quasi-isometry. 
Here $d_{word}$ denotes the word metric of the group 
$\pi_1(M,x_0)$ associated with some finite generating set and $\tilde{d}$ denotes the induced metric of $d$ 
on $\tilde{M}$. 
Since the action of $\pi_1(M, x_0)$ commutes with  $\pi \colon \tilde{M} \to M$, the same quasi-isometry $g \mapsto gx_0$ 
gives us a bijective $(L,A)$-quasi-isometry between 
$(\pi_1(M, x_0), d_{word})$ and the  fiber $\pi^{-1}(x_0)$ 
equipped with the induced metric $\tilde{d}\vert_{\pi^{-1}(x_0)}$: 
\begin{equation*}
L^{-1}d_{word}(g, h) - A  \leq \tilde{d}\vert_{\pi^{-1}(x_0)} (gx_0, hx_0) \leq Ld_{word}(g, h) + A, \quad \text{for every } 
g, h \in \pi_1(M, x_0). 
\end{equation*}
Hence, for every $D > 0$, we have:  
\begin{align*}
\# B(\pi_1(M, x_0), L^{-1}D + A) 
\leq 
\# B((\pi^{-1}(x_0), \tilde{x}_0), D) 
\leq 
\# B(\pi_1(M, x_0), LD + LA). 
\end{align*}

Here, $B(\pi_1(M, x_0), r)$ 
denotes the ball of $d_{word}$-radius $r$ 
in $\pi_1(M, x_0)$ centered
at $0$. 
Similarly, 
$B((\pi^{-1}(x_0), \tilde{x}_0), r)$ denotes the ball of $\tilde{d}\vert_{\pi^{-1}(x_0)}$-radius 
$r$ in $\pi^{-1}(x_0)$ centered at $\tilde{x}_0$. 
Since $LD + $A and $LD + LA$ are fixed linear functions in $D$, the growths of $\pi^{-1}(x_0)$ and of 
$\pi_1(M, x_0)$ are clearly of the same order.  
\end{proof}

An important application of Proposition \ref{p:same-growth-geometric-group} 
is the following lemma which can be seen as 
the analogue of the Minkovski Counting Lemma: 

\begin{lemma} 
\label{l:exponential-homotopy-northcott} 
Let $(M,d)$ be a connected compact Riemannian manifold with  boundary. 
Let $x_0 \in M$. 
For every $L >0$, define $n(L)$ to be the number of homotopy classes in $\pi_1(M, x_0)$ which admit 
some representative loops of length at most $L$ with respect to the metric $d$. 
Then:  
\begin{enumerate} [\rm (i)]
\item
if $\pi_1(M)$ is virtually nilpotent, there exist $a, r > 0$ such that 
$n(L) \leq a(L+1)^r$   for all $L >0$. 
 If $\pi_1(M)$ is abelian, $r$ can be chosen to be the rank of $\pi_1(M)$;  
\item
In general, there exists $p > 0$ such that 
$n(L) \leq \exp(p(L+1))$  for all $L >0$.   
\end{enumerate}

\end{lemma}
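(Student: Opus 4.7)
The plan is to pass to the universal cover of $M$ and reformulate $n(L)$ as a ball-counting function on the orbit $\pi_1(M,x_0)\cdot \tilde{x}_0$, after which the growth estimates from Lemma~\ref{l:at-most-exponental-grp} and Theorem~\ref{t:gromov-polynomial-growth} apply almost directly through Proposition~\ref{p:same-growth-geometric-group}. Concretely, let $\pi \colon \tilde{M} \to M$ denote the universal cover equipped with the pulled-back Riemannian metric $\tilde{d}$, and fix a lift $\tilde{x}_0 \in \pi^{-1}(x_0)$. The path-lifting property yields a length-preserving bijection between loops at $x_0$ representing a class $[g] \in \pi_1(M, x_0)$ and $\tilde{d}$-paths from $\tilde{x}_0$ to $g\tilde{x}_0$. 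Since $(\tilde{M}, \tilde{d})$ is a complete geodesic space, the infimum of the lengths of such paths equals $\tilde{d}(\tilde{x}_0, g\tilde{x}_0)$ and is attained. Hence
\begin{equation*}
n(L) \;=\; \#\{g \in \pi_1(M, x_0) \colon \tilde{d}(\tilde{x}_0, g\tilde{x}_0) \leq L\}.
\end{equation*}

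By Theorem~\ref{t:pi-1-finite-generation}, the group $\pi_1(M, x_0)$ is finitely generated; fix a finite generating set $S$. Proposition~\ref{p:same-growth-geometric-group} asserts that the orbit map $g \mapsto g\tilde{x}_0$ is a bijective $(L', A)$-quasi-isometry between $(\pi_1(M, x_0), d_S)$ and the fibre $(\pi^{-1}(x_0), \tilde{d})$, for some constants $L', A > 0$. Consequently $\tilde{d}(\tilde{x}_0, g\tilde{x}_0) \leq L$ forces $d_S(g, 1_G) \leq L'(L + A)$, whence
\begin{equation*}
n(L) \;\leq\; \# B\bigl((\pi_1(M, x_0), d_S),\, L'L + L'A\bigr).
\end{equation*}

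For (ii), Lemma~\ref{l:at-most-exponental-grp} furnishes $N > 1$ with $\#B((\pi_1(M, x_0), d_S), R) \leq N^R$, so that $n(L) \leq N^{L'(L + A)} \leq \exp(p(L+1))$ for any $p \geq L'(\ln N)\max(1, A)$. For the first part of (i), if $\pi_1(M)$ is virtually nilpotent, Gromov's Theorem~\ref{t:gromov-polynomial-growth} supplies constants $a_0, r > 0$ with $\#B((\pi_1(M, x_0), d_S), R) \leq a_0(R + 1)^r$, and substitution with $R = L'(L+A)$ delivers the polynomial bound $n(L) \leq a(L+1)^r$ after absorbing constants. For the refined abelian case I would choose $S$ as the union of a $\Z$-basis of the free part of $\pi_1(M, x_0) \simeq \Z^{r} \oplus T$ with (a generating set of) the finite torsion $T$; each element of $d_S$-length $\leq R$ then has the form $(n_1, \dots, n_r) + t$ with $|n_1|+\cdots+|n_r|\leq R$ and $t \in T$, so the ball has size at most $\#T \cdot (2R+1)^r$, producing a polynomial bound of degree exactly $r = \rank \pi_1(M)$.

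The main obstacle is only technical: Proposition~\ref{p:same-growth-geometric-group} requires $x_0$ to lie in the interior of $M$, whereas the lemma allows $x_0$ on the boundary. This is routinely resolved by connecting $x_0$ to a nearby interior point via a bounded path, which alters $n(L)$ only by a bounded additive shift of the argument that is absorbed into the constant $A$ of the quasi-isometry; alternatively, one may observe that base-point conjugation is an isometry of $(\pi_1(M), d_S)$ up to bounded distortion, so the counting function $n(L)$ is base-point independent up to reparametrisation.
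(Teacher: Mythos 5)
Your proposal is correct and follows essentially the same route as the paper's proof: lift to the universal cover, identify $n(L)$ with counting orbit points in the fibre $\pi^{-1}(x_0)$, transfer to the word metric via the quasi-isometry of Proposition \ref{p:same-growth-geometric-group}, and then invoke Lemma \ref{l:at-most-exponental-grp} for (ii), Gromov's Theorem \ref{t:gromov-polynomial-growth} for (i), and an explicit lattice-plus-torsion count for the abelian refinement. Your closing remark on base points lying on $\partial M$ is a welcome extra precision that the paper's proof leaves implicit.
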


\begin{proof}
Let $\pi \colon \tilde{M} \to M$ be the universal cover of 
$M$ and fix $\tilde{x}_0 \in \pi^{-1}(x_0)$. 
Then $\tilde{M}$ is a connected 
geodesic Riemannian manifold with metric $\tilde{d}$ which makes $\pi$ into a local isometry. 
The length of the loops in $(M,x_0)$ is the same as the length of their lifts in $\tilde{M}$. 
The geometric metric on $\pi^{-1}(x_0)$ is induced by $\tilde{d}$. 
Hence, a loop $\gamma$ in $M$ based at $x_0$  
of length at most $L$ is uniquely determined by 
a point $[\gamma]x_0 \in \pi^{-1}(x_0)$ such that 
$\tilde{d} ([\gamma]x_0, \tilde{x}_0) \leq L$. 
By Theorem \ref{t:milnor-svarc}, $\pi_1(M,  x_0)$ 
is quasi-isometric to the fiber $\pi^{-1}(x_0)$. 
Hence, they have the same order of growth (Proposition \ref{p:same-growth-geometric-group}) 
where the metric on $\pi^{-1}(x_0)$ 
is induced by $\tilde{d}$ while  
the one on $\pi_1(M, x_0)$ is the word metric 
with respect to a finite system of generators (which exists since $\pi_1(M, x_0)$ 
is  finitely generated by Theorem \ref{t:pi-1-finite-generation}). 
Since the growth of $\pi_1(M, x_0)$ 
  is at most exponential (cf. Lemma \ref{l:at-most-exponental-grp}), 
the point (ii) is proved. 
The above discussion and Theorem \ref{t:gromov-polynomial-growth} of Gromov   imply the point (i) 
except for the second statement. 
If $\Gamma= \pi_1(M, x_0)$ is abelian then  $r= \rank \Gamma \geq 0$ is finite (by Theorem \ref{t:pi-1-finite-generation}). 
Hence, $\Gamma = \Z g_1 \oplus  \cdots \oplus \Z g_r  \oplus \Gamma_{tors}$ 
for some $g_1, \dots, g_r \in \Gamma$.  
Recall the word metric $d_S$ on $\Gamma$ where $S= \{ g_1, \dots, g_r\}$. 
It is easy to see that: 
\begin{align*} 
\#\{ g \in \Gamma \colon d_S(g, 1_\Gamma) \leq L \} 
& \leq    \# \Gamma_{tors} .   \#  
\left\{g= \sum_{j=1}^r n_j g_j \colon -L \leq n_j \leq L \right \}   \leq \# \Gamma_{tors} (2L+1)^r. 
\end{align*}
The conclusion follows by Proposition \ref{p:same-growth-geometric-group}. 
\end{proof}

\section{Hilbert schemes of algebraic groups} 

By the theory of Hilbert schemes of subvarieties developed by Grothendieck and 
Altmann-Kleiman, we have the following properties needed in the proof of Theorem \ref{t:parshin-90} and 
Theorem~\ref{t:parshin-generic-emptiness-higher-dimension}. 

\begin{lemma}
\label{t:hilbert-group-schemes}
Let $\pi \colon G\to S$ be a (quasi-)projective group scheme over a scheme $S$. 
Let $\DD \subset G$ be a closed subscheme and let $\VV= G \setminus \DD$. 
Consider the contravariant functors 
$
\FF_{G/S} \colon \Sch_S \to \Ensemble 
$
defined for $T \to S$ an $S$-scheme by 
\begin{equation*}
\FF_{G/S}(T) \coloneqq 
\left \{ 
\begin{aligned}
&\text{ (connected) group subschemes of } G_T, \text{ that are } \\
&\text{ flat, proper, and of finite presentation over } T 
\end{aligned}
\right \}. 
\end{equation*} 
Then the following holds:  
\begin{enumerate} [\rm (a)]
\item
$\FF_{G/S}$ is representable by a locally of finite type $S$-scheme denoted also by 
$\FF_{G/S}$; 
\item
There exist natural immersions of $S$-schemes
$$
\FF_{G/S}, \, \Mor_{S}(S, G), \,  \Hilb_{\DD/S}^{\dim > 0},\,   \Hilb_{\VV/S}^{\dim > 0} \subset \Hilb_{G/S};   
$$ 
where $\Hilb_{X/S}^{\dim >0}$ denotes the 
complement in $\Hilb_{X/S}$  
of the $S$-relative Hilbert schemes of points, i.e., of zero dimensional closed subschemes, 
of an $S$-scheme $X$.  
\item
The $S$-scheme 
$
\FF_{G/S, \DD} \coloneqq \Hilb_{\DD/S}^{\dim >0} \times_{\Hilb_{G/S}} \left(  \FF_{G/S} \times_S \Mor_S(S,G) \right) 
$  
represents the contravariant functor $\Sch_S \to \Ensemble$ given by 
\[
T \mapsto  
\left \{ 
\begin{aligned} 
 & \text{ translates of }
 (\varphi \colon H \to G_T) \in \FF_{G/S} (T) \text{ by a $T$-section } \sigma \colon T \to G_T \\
 & \text{ such that } 
 \dim H >0 \text{ and } 
 \im (\sigma . \varphi) \subseteq  \DD_T 
\end{aligned}
\right \}, 
\]
where $\sigma.\varphi (h) \coloneqq \sigma(\pi_T \circ \varphi(h)) \varphi(h)$ for every $h \in H$. 
That is, $\FF_{G/S, \DD}$ is the moduli space of translates of positive dimensional group subschemes 
of $G_s$ contained in $\DD_s$ for $s \in S$;  
\item
Similarly, the $S$-scheme 
$
\FF_{G/S, \VV} \coloneqq \Hilb_{\VV/S}^{\dim >0} \times_{\Hilb_{G/S}} \left(  \FF_{G/S} \times_S \Mor_S(S,G) \right) 
$  
is the moduli space of translates of positive dimensional group subschemes 
of $G_s$ that have empty intersection with $\DD_s$ for $s \in S$; 
\item
The schemes $\FF_{G/S, \DD}$ and $\FF_{G/S, \VV}$ have only countably many irreducible components. 
\end{enumerate}
\end{lemma}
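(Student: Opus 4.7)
My strategy is to reduce everything to the known representability results for Hilbert and Hom schemes (Grothendieck, Altmann--Kleiman) and then cut out the desired subfunctors by closed/open conditions. For (a), I would start with $\Hilb_{G/S}$, which is representable by a scheme locally of finite type over $S$ in the quasi-projective setting. Let $\mathcal{Z}\subset G\times_S\Hilb_{G/S}$ denote the universal flat proper closed subscheme. The property of being a group subscheme is cut out by three conditions on $\mathcal{Z}$: (i) it contains the identity section, (ii) it is stable under the multiplication $m\colon G\times_S G\to G$, in the sense that the restriction $m|_{\mathcal{Z}\times_{\Hilb_{G/S}}\mathcal{Z}}$ factors through $\mathcal{Z}$, and (iii) it is stable under inversion. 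Each is a closed condition on $\Hilb_{G/S}$ via the standard representability of Hom-functors between flat proper families; intersecting the three gives $\FF_{G/S}$, while the ``connected'' refinement is open-and-closed by upper-semicontinuity of the number of connected components of fibres.

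For (b), the inclusion $\FF_{G/S}\hookrightarrow\Hilb_{G/S}$ comes from the construction in (a). The section functor $\Mor_S(S,G)$ is represented by $G$ itself, and a $T$-point $\sigma\in G(T)$ is sent to the image of its graph in $G_T$, a zero-dimensional flat proper closed subscheme, yielding an immersion into $\Hilb_{G/S}$. For $\DD$ (resp.\ $\VV$), the closed (resp.\ open) embedding $\DD\hookrightarrow G$ (resp.\ $\VV\hookrightarrow G$) induces a closed (resp.\ open) immersion of Hilbert schemes consisting of those subschemes supported in $\DD$ (resp.\ in $\VV$); cutting out the positive-dimensional locus is then open-and-closed by the stratification of $\Hilb$ by Hilbert polynomial.

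For (c) and (d), I would construct an auxiliary morphism $\mu\colon\FF_{G/S}\times_S\Mor_S(S,G)\to\Hilb_{G/S}$ sending a pair $(\varphi\colon H\to G_T,\,\sigma)$ to the closed subscheme $\sigma\cdot\varphi(H)\subset G_T$; this is well-defined because translation by $\sigma$ is a $T$-automorphism of $G_T$ preserving flatness, properness and Hilbert polynomial. By the universal property of the fibered product, the fibered product $\FF_{G/S,\DD}$ (resp.\ $\FF_{G/S,\VV}$) then parametrizes exactly the triples $(H,\sigma)$ whose translate $\sigma\cdot\varphi(H)$ lands in $\DD$ (resp.\ in $\VV$), which is the stated moduli description. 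For (e), the standard decomposition $\Hilb_{G/S}=\coprod_P\Hilb_{G/S}^P$ indexed by numerical polynomials, with each $\Hilb_{G/S}^P$ quasi-projective and hence of finitely many irreducible components, shows that $\Hilb_{G/S}$ has countably many irreducible components. This property is inherited by any locally closed subscheme, and so by $\FF_{G/S,\DD}$ and $\FF_{G/S,\VV}$.

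The main technical obstacle will be step (a), namely checking carefully that stability of the universal subscheme $\mathcal{Z}$ under multiplication and inversion is genuinely a closed condition on $\Hilb_{G/S}$ rather than merely a locally closed one. This rests on the representability of $\Hom$-functors between proper flat families due to Grothendieck (and the quasi-projective extension of Altmann--Kleiman), together with the fact that the locus where a morphism into $G$ factors through a given closed subscheme of $G$ is closed in the parameter scheme. Once this is established, (b)--(e) are formal consequences of the universal properties of the representing schemes.
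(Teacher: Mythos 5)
Your proposal is correct and follows the same overall strategy as the paper (represent everything inside $\Hilb_{G/S}$, use the stratification by Hilbert polynomial to get countability of components, use the universal property of the fibered product for (c)--(d)). The main divergence is in part (a): the paper simply cites SGA~3, Expos\'e~XI, Remarque~3.13 for the representability of $\FF_{G/S}$, whereas you sketch the underlying argument directly, cutting $\FF_{G/S}$ out of $\Hilb_{G/S}$ by the three closed conditions (contains the identity section, stable under $m$, stable under inversion) via the factorization-through-a-closed-subscheme locus. This is indeed how the SGA~3 result is proved, so your route is a legitimate unpacking of the reference; what the paper's citation buys is that one does not need to re-verify (as you rightly flag) that the factorization loci are closed, which uses properness and flatness of the universal family over $\Hilb_{G/S}$ together with openness of flat finite-type morphisms. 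For (c)--(d), the paper only says the assertions ``are also clear,'' while you make the implicit translate morphism $\mu\colon\FF_{G/S}\times_S\Mor_S(S,G)\to\Hilb_{G/S}$ explicit and identify the functor of points of the fibered product; this is a genuine clarification of what the paper leaves tacit, and it is needed in order for the fibered-product definition of $\FF_{G/S,\DD}$ to make sense. One small caveat: your appeal to ``upper-semicontinuity of the number of connected components'' for the connectedness refinement really uses upper-semicontinuity of $\dim H^0(Y_t,\mathcal O_{Y_t})$, which counts geometric connected components only under a geometric reducedness hypothesis; this is harmless in the paper's characteristic-zero application but is worth phrasing via Stein factorization in the stated generality.
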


\begin{proof}
For (a), see \cite[Expos\' e XI, Remarque 3.13]{sga3-II}.  
The existence of other schemes in (b) is standard since $G/S$ is quasi-projective and so are $\DD/S$, $\VV/S$ as 
$\DD \subset G$ is assumed to be closed.  
Observe that $\Hilb_{G/S}$ 
is the disjoint union of quasi-projective $S$-schemes $\Hilb_{G/S}^{q(x)}$ with $q(x) \in \Q[x]$ runs over 
all numerical  
polynomials of degree $\leq \dim G_s$ where $G_s$ is a general fiber. 
Moreover, the $S$-schemes $\FF_{G/S}, \Mor_{S}(S, G) $, $ \Hilb_{\DD/S}^{\dim > 0}$ and 
$\Hilb_{\VV/S}^{\dim > 0}$ are also stratified by the same Hilbert polynomials and 
that $\Hilb_{\DD/S}^{\dim > 0}$ and $\Hilb_{\VV/S}^{\dim > 0}$ do not take into account zero degree polynomials. 
Since each $S$-scheme of finite type has finitely many irreducible components, 
it follows from the stratification by Hilbert polynomials that all 
Hilbert schemes in the lemma have only countably many irreducible components. 
In particular, this proves (e). 
The assertions (c) and (d) are also clear. 
\end{proof}

\begin{lemma}
\label{l:hilbert-dedekind-countabe}
Let $B$ be a Dedekind scheme with fraction field $K$. 
Suppose that $\pi \colon G \to B$ is a quasi-projective $B$-group scheme. 
Let $\DD \subset G$ be a closed subset with generic fiber $D= \DD_K \subset G_K$. 
Then: 
\begin{enumerate} [\rm (i)]
\item
if $D$ does not contain  
any translates of positive dimensional $\overline{K}$-algebraic subgroups of $G_K$, 
then the following exceptional set is countable 
\[
\begin{aligned}
Z(G,\DD) \coloneqq  \left\{  
 t \in B \text{ closed point}\colon \exists x \in G_t, \exists\,H \subset G_t \text{ a subgroup}, \, \dim H > 0, \, 
xH \subset  \DD_t 
\right\} ; 
\end{aligned}
\]

\item
if $G_K \setminus D$ does not contain  
any translates of positive dimensional $\overline{K}$-algebraic subgroups of $G_K$, 
then the following exceptional set is countable 
\[
\begin{aligned}
U(G,\DD) \coloneqq  \left \{  
t \in B \text{ closed point} \colon \exists x \in G_t,\, \exists H \subset G_t \text{ a subgroup}, \, \dim H > 0, \, 
xH \subset  G_t \setminus \DD_t 
\right\}.
\end{aligned}
\]
 
\end{enumerate}

\end{lemma}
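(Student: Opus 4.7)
The strategy is to identify each exceptional set with the set of closed points in the image of one of the moduli spaces constructed in Lemma \ref{t:hilbert-group-schemes}, and then exploit the fact that such a moduli space has only countably many irreducible components together with the hypothesis that its geometric generic fiber over $B$ is empty.

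For (i), I would first apply Lemma \ref{t:hilbert-group-schemes}(c) to the quasi-projective $B$-group scheme $G \to B$ and the closed subscheme $\DD \subset G$ to obtain the $B$-scheme $\FF_{G/B, \DD}$ representing the functor of translates of positive-dimensional flat proper group subschemes contained in $\DD$. By the moduli description, a closed point $t \in B$ lies in $Z(G, \DD)$ if and only if the geometric fiber of $\varphi \colon \FF_{G/B,\DD} \to B$ above $t$ is non-empty, equivalently $t$ belongs to the image $\varphi(\FF_{G/B, \DD}) \subset B$. The next step is to observe that the geometric generic fiber $\varphi^{-1}(\Spec \overline{K})$ parametrizes translates of positive dimensional $\overline{K}$-algebraic subgroups of $G_K$ contained in $D = \DD_K$; by hypothesis (i) this fiber is empty, so the generic fiber of $\varphi$ is empty as well.

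I would then conclude using the structure of $B$ as a Dedekind (hence one-dimensional) scheme. Let $Y$ be any irreducible component of $\FF_{G/B, \DD}$; its set-theoretic image $\varphi(Y)$ is irreducible, and since $\FF_{G/B, \DD}$ is locally of finite type over $B$, the image is constructible on each quasi-compact open, so its closure in $B$ is either a closed point or all of $B$. The second possibility would force $\varphi(Y)$ to contain the generic point $\eta \in B$, contradicting the emptiness of the generic fiber. Hence $\varphi(Y)$ reduces to a single closed point. Combining this with Lemma \ref{t:hilbert-group-schemes}(e), which guarantees that $\FF_{G/B, \DD}$ has only countably many irreducible components, we deduce that $\varphi(\FF_{G/B, \DD})$, and thus $Z(G, \DD)$, is countable. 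For (ii), I would run the same argument verbatim with $\VV = G \setminus \DD$ in place of $\DD$, using Lemma \ref{t:hilbert-group-schemes}(d) and noting that $\VV$ is open in the quasi-projective $B$-scheme $G$, hence itself quasi-projective over $B$, so $\FF_{G/B, \VV}$ is available with the same formal properties.

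The only delicate point is justifying that an irreducible component whose image meets the generic point of $B$ must contribute to the generic fiber: this uses either that $\varphi$ restricted to such a component is dominant (so its generic point maps to $\eta$) or Chevalley's constructibility theorem applied locally. Apart from this small book-keeping, the proof is a direct translation of the hypothesis into emptiness of the geometric generic fiber of the relevant Hilbert-type moduli space.
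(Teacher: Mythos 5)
Your proposal is correct and follows essentially the same route as the paper: both identify the exceptional set with the image of the moduli space $\FF_{G/B,\DD}$ (resp. $\FF_{G/B,\VV}$) from Lemma \ref{t:hilbert-group-schemes}, show that each of its countably many irreducible components fails to dominate $B$, and conclude via Chevalley/constructibility that the image is a countable set of closed points. The only inessential difference is in how non-dominance is obtained: you read it off directly from the emptiness of the geometric generic fiber over $\overline{K}$, while the paper argues by contradiction through a one-dimensional subscheme dominating $B$ whose generic point has residue field a finite extension $L \subset \overline{K}$ of $K$ — both are the same base-change/functoriality argument.
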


\begin{proof}
For (i), let $X$ be an irreducible component of the Hilbert $B$-scheme 
$\FF_{(G/B), \DD}$ defined in Lemma \ref{t:hilbert-group-schemes}. 
Then $X$ is a scheme of finite type over $B$. 
We claim that the induced morphism $f_X \colon X \to B$ 
is not dominant. 
The point (i) will then be proved since $\FF_{(G/B), \DD}$ does not 
dominate $B$. Indeed, 
as $\FF_{(G/B), \DD}$ contains only countably many irreducible components each of which being of finite type over $B$ 
(cf. Lemma \ref{t:hilbert-group-schemes}.(c)), 
the image of $\FF_{(G/B), \DD}$ in $B$ is thus a countable subset of closed points of $B$ 
by Chevalley's theorem 
and the conclusion follows. 
\par
Suppose on the contrary that $X$ dominates $B$. 
Then we can find a 1-dimensional irreducible closed subscheme $C \subset X$ such that 
$C$ dominates $B$. 
Up to replacing $X$ by $C$, we can thus assume that $\dim X =1$. 
Let $\eta$ be the generic point of $X$ and let 
$V \hookrightarrow \FF_{(G/B), \DD} \times_B G$ be the universal group scheme. 
Then $L=  \kappa(\eta) \subset \overline{K}$ is a finite extension of $K$. 
On the other hand, it follows from the functorial property of Hilbert schemes 
that $V_L$ is the universal group scheme of $G_L /L$ avoiding $\DD_L$ 
but $\FF_{(G_L/L), \DD_L}$ is empty by the hypothesis of (i). 
This contradiction shows that $X$ cannot dominate $B$ and (i) is proved. 
The proof of (ii) is similar. 
\end{proof}

\bibliographystyle{siam}

\end{document}